\newcommand{\BC}{{\mathbb {C}}} 
 \newcommand{\BF}{{\mathbb {F}}}
\newcommand{\BG}{{\mathbb {G}}}
 \newcommand{\BP}{{\mathbb {P}}}
\newcommand{\BQ}{{\mathbb {Q}}} 
\newcommand{\BS}{{\mathbb {S}}}
 \newcommand{\BZ}{{\mathbb {Z}}}
\newcommand{\CA}{{\mathcal {A}}} 
\newcommand{\CC}{{\mathcal {C}}} \renewcommand{\CD}{{\mathcal {D}}}
\newcommand{\CK}{{\mathcal {K}}} \newcommand{\CL}{{\mathcal {L}}}
 \newcommand{\CN}{{\mathcal {N}}}
\newcommand{\CO}{{\mathcal {O}}}
\newcommand{\CU}{{\mathcal {U}}} 
\newcommand{\CY}{{\mathcal {Y}}}
\newcommand{\fa}{{\mathfrak{a}}}
\newcommand{\fg}{{\mathfrak{g}}} \newcommand{\fh}{{\mathfrak{h}}}
 \newcommand{\fp}{{\mathfrak{p}}}
\theoremstyle{plain}
\newtheorem{thm}{Theorem}[section] \newtheorem{cor}[thm]{Corollary}
\newtheorem{lem}[thm]{Lemma}
 \newtheorem{prop}[thm]{Proposition}
\newtheorem{defn}[thm]{Definition}
 \newtheorem{lem-defn}[thm]{Lemma-Definition}
\theoremstyle{remark} \newtheorem{remark}[thm]{Remark}
\theoremstyle{remark} 
\theoremstyle{remark} \newtheorem{example}{Example}
\numberwithin{equation}{section}
\author{Yingqi Liu}
\title{A characterization of irreducible Hermitian symmetric spaces of tube type by $\BC^{*}$-actions}
\address{Yingqi Liu,  AMSS, Chinese Academy of Sciences, 55 ZhongGuanCun East Road, Beijing, 100190, China and  University of Chinese Academy of Sciences, Beijing, China}
\email{liuyingqi@amss.ac.cn}
\begin{document}
	\date{\today}

	\maketitle
	
	\begin{abstract}
		A $\BC^{*}$-action on a projective variety $X$  is said to be of Euler type at a nonsingular fixed point $x$ if the isotropy action of $\BC^{*}$ on $T_{x}X$ is by scalar multiplication. In this paper, it's proven that a smooth projective variety  of Picard number one $X$ is isomorphic to an irreducible Hermitian symmetric space of tube type if and only if  for a general pair of points $x,y$ on $X$, there exists a $\BC^{*}$-action on $X$ which is of Euler type at $x$ and its inverse action is of Euler type at $y$.
	\end{abstract}
	\section{Introduction}
	\subsection{Main result}
	The study of  complex torus actions on algebraic varieties is a classical topic in algebraic geometry. It is  an interesting problem to classify algebraic varieties with special $\BC^{*}$-actions. 
	Let $X$ be a smooth projective variety. A $\BC^{*}$-action on $X$ is said to be equalized at a fixed point $x$ if any weight of the isotropy action on $T_{x}X$ equals to 0 or $\pm 1$. We call the action equalized if it's equalized at each fixed point.  Denote by $X^{\BC^{*}}$ the set of fixed points of the $\BC^{*}$-action. An irreducible component of $X^{\BC^{*}}$ is called extremal if it intersects with general $\BC^{*}$-orbit closures.
	In the series of works  \cite{adjunction_c*}\cite{small_bandwidth}\cite{ORSW_2021}, the authors study equalized $\BC^{*}$-actions on projective manifolds with isolated extremal fixed components. For rational homogenous spaces they proved the following:
	\begin{thm}\label{small_bandwidth}
		\cite{ORSW_2021}  Let $X=G/P$ be a rational homogenous space of Picard number one, then $X$ admits an equalized $\BC^{*}$-action with two isolated extremal fixed points if and only if X  is isomorphic to one of the followings:\par
		(i) a smooth hyperquadric $\BQ^{n}$.\par 
		(ii) the Grassmaninan variety $Gr(n,2n)$. \par 
		(iii) the Lagrangian Grassmanian variety $Lag(n,2n)$. \par
		(iv) the spinor variety $\BS_{2n}$. \par 
		(v) the 27 dimensional  $E_{7}$-variety $E_{7}/P_{7}$. 
	\end{thm}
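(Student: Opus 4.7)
The plan is to translate the $\BC^{*}$-action into Lie-theoretic data and carry out a case-by-case analysis by marked Dynkin diagram. Since $X = G/P$ has Picard number one, the parabolic $P$ is maximal and corresponds to a single simple root $\beta$ of $G$. After conjugation the action arises from a cocharacter $\lambda : \BC^{*} \to T \subset G$, and the $T$-fixed points of $G/P$ form the finite set $\{wP : w \in W/W_{P}\}$. The weights of the isotropy action of $\lambda$ on $T_{wP}X$ are the integers $\langle \lambda, w\alpha\rangle$ as $\alpha$ runs over the roots of the nilradical $\fp_{u}$.

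For the ``if'' direction, exhibit the cocharacter explicitly on each of the five varieties. All five are Hermitian symmetric of tube type, so $\fg$ admits a short grading $\fg = \fg_{-1} \oplus \fg_{0} \oplus \fg_{1}$ determined by the fundamental coweight dual to the marked cominuscule node $\beta$, with $\fg_{\pm 1}$ the nilpotent radicals of $P$ and its opposite. A suitable normalization of this coweight gives a cocharacter $\lambda$ whose isotropy weights on any $T_{wP}X$ lie in $\{-1,0,1\}$, so the action is equalized. The tube type hypothesis, equivalently the statement that $-w_{0}$ preserves $\beta$, implies that the lowest weight $w_{0}\omega_{\beta}$ in the Weyl orbit $W \cdot \omega_{\beta}$ is fixed by $W_{P}$, so the limit $\lim_{t \to 0} t \cdot x$ of a generic orbit is the single point $w_{0}^{P}P$; symmetrically the other limit is $eP$, giving two isolated extremal fixed points.

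For the ``only if'' direction, proceed by a case-by-case analysis of pairs $(G,\beta)$ with $G$ simple. The equalization condition at $eP$ requires $\langle \lambda, \alpha\rangle \in \{-1,0,1\}$ for every root $\alpha$ of $\fp_{u}$; running this over all fixed points $wP$ (whose tangent directions cover all Weyl translates of roots of $\fp_{u}$) shows that $\lambda$ is, up to Weyl conjugation, a multiple of the fundamental coweight dual to $\beta$ and forces the coefficient of $\beta$ in every positive root of $\fg$ to be at most $1$, i.e.\ $\beta$ is cominuscule. The candidate list is thereby cut to the compact irreducible Hermitian symmetric spaces of Picard number one. The isolated extremal fixed point hypothesis then rules out the non-tube cases $Gr(p,q)$ with $p \neq q$, the non-tube spinor varieties, and $E_{6}/P_{1}$: in each of these the extremal Weyl-group coset for $\lambda$ is not a single class in $W/W_{P}$ but a Schubert subvariety of positive dimension (e.g.\ a copy of $Gr(p,q-p)$ inside $Gr(p,q)$ when $p<q-p$), so the corresponding extremal fixed component fails to be an isolated point. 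The only surviving cases are (i)--(v).

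The main obstacle is the final elimination of the non-tube cominuscule cases. Once cominuscule is established, showing that in each non-tube case one of the two extremal fixed components is necessarily positive-dimensional amounts to analyzing the action of the longest Weyl element $w_{0}$ on the marked fundamental weight and the resulting asymmetry of the Schubert decomposition. The computation depends on the concrete structure of $W/W_{P}$ in each non-tube Dynkin type, and this combinatorial case analysis is the technical heart of the classification in \cite{ORSW_2021}.
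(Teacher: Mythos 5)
The paper does not actually prove Theorem 1.1 — it quotes it from \cite{ORSW_2021} — and the closest internal statement is Proposition 4.3, which records precisely the Lie-theoretic dictionary you are reconstructing: equalized $\Leftrightarrow$ the cocharacter induces a short grading of $\fg$; isolated sink $\Leftrightarrow$ the grading node coincides with the marked node (so $X$ is an IHSS); isolated source in addition $\Leftrightarrow$ tube type. Your overall route (conjugate the action into a maximal torus, read isotropy weights at the fixed points $wP$ as values of the cocharacter on roots, then classify by marked Dynkin diagram) is the same one, and your ``if'' direction is essentially right.

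There is, however, a genuine gap in your ``only if'' direction. You assert that equalization alone, run over all fixed points, forces $\lambda$ to be (up to Weyl conjugation) a multiple of the fundamental coweight dual to the \emph{marked} node $\beta$, and hence that $\beta$ is cominuscule. What equalization actually gives is only $|\langle\lambda,\alpha\rangle|\leqslant 1$ for all roots $\alpha$, i.e.\ $\lambda$ is, up to sign and conjugation, the coweight $\sigma_{\gamma}$ of \emph{some} cominuscule node $\gamma$ of $G$, which need not equal $\beta$: for example $\sigma_{\alpha_{1}}$ defines an equalized action on $B_{3}/P_{\alpha_{2}}$, although $\alpha_{2}$ is not cominuscule and this variety is not Hermitian symmetric. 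What eliminates such cases is the isolated-extremal-fixed-point hypothesis, which you invoke only later and only to exclude non-tube IHSS. The correct bookkeeping is: the extremal fixed components are orbits of the semisimple part of the centralizer Levi $G^{\lambda}$, so demanding that the sink be a single point forces (after conjugation) $\gamma=\beta$, hence $\beta$ cominuscule and $X$ an IHSS; demanding that the source also be a point then forces $-w_{0}\beta=\beta$, i.e.\ tube type — which also replaces your case-by-case Schubert analysis of the non-tube cases by a uniform argument. A minor additional point: the action is only given on $X$, so to conjugate $\BC^{*}$ into a torus of $G$ you should take $G=\mathrm{Aut}^{0}(X)$ (or dispose of the coincidences such as $\BP^{2n-1}=C_{n}/P_{1}$ and $\BQ^{5}=G_{2}/P_{1}$) before running the root-theoretic argument.
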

	
	The varieties classified above are exactly  irreducible Hermitian symmetric spaces (IHSS for short) of tube type. Recall that an IHSS is called of tube type if its dual, as a bounded symmetric domain, is holomorphically equivalent to a tube domain over a self dual cone.  
	In \cite{Mok_}  Mok showed that  an IHSS is of tube type if and only if for a point $o \in X$ the complement of the  vectors of maximal rank in $T_{o}(X)$  is a hypersurface. Translating in the language of $\BC^{*}$-actions, one can regard  Theorem 1.1 as a generalization of Mok's result to rational homogenous spaces.\par 
	It is then a natural problem to characterize IHSS of tube type by $\BC^{*}$-actions  in a more general context. A $\BC^{*}$-action on a projective manifold $X$ is said to be of Euler type at a fixed point $x$ if the isotropy action of $\BC^{*}$ on $T_{x}X$ is by scalar multiplication, or equivalently if the $\BC^{*}$-action is equalized at $x$ and  $x$ is an isolated extremal fixed point.  In Theorem 1.1, by taking certain conjugates of $\BC^{*}$ in $Aut^{0}(X)$  one can show that for a general pair of points $x,y$ on $X$ there exists a $\BC^{*}$-action which is of Euler type at $x$ and its inverse action is of Euler type at $y$ (see Section 4 for more details). Here a general  pair of points on $X$ is defined as a pair of points  lying in a Zariski open dense subset of $X \times X$. Our main result proves the converse:
	
	\begin{thm}\label{main_thm}
		Let $X$ be a smooth projective variety of Picard number one, then $X$ is isomorphic to an IHSS of tube type if and only if for a general pair of points $x,y$ on $X$, there exists a $\BC^{*}$-action on $X$ which is of Euler type at $x$ and its inverse action is of Euler type at $y$.
	\end{thm}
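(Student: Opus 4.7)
For the \emph{only if} direction, my plan is to deduce it directly from Theorem \ref{small_bandwidth} combined with the $2$-transitivity of $\Aut^0(X)$ on a dense open subset of $X \times X$: given any IHSS of tube type $X$, Theorem \ref{small_bandwidth} furnishes an equalized $\BC^*$-action with two isolated extremal fixed points $x_0, y_0$. At $x_0$ the conjunction of ``equalized'' and ``isolated extremal'' upgrades, via the equivalent characterization of Euler type given in the introduction, to an Euler-type action; the same at $y_0$ holds for the inverse action. I would then conjugate by suitable elements of $\Aut^0(X)$ to transport $(x_0, y_0)$ to an arbitrary general pair $(x,y)$. This is the content of Section 4.

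For the \emph{if} direction, I fix a general pair $(x,y)$ and let $\lambda = \lambda_{x,y}$ be the given $\BC^*$-action on $X$. My first step is to apply the Bia\l ynicki--Birula decomposition. Since all isotropy weights at $x$ are $+1$, the plus-cell $U_x = \{p \in X : \lim_{t \to 0} \lambda(t) \cdot p = x\}$ is $\BC^*$-equivariantly isomorphic to $(T_x X, \text{scalar})$, hence to $(\BA^n, \text{scalar})$, and is open dense in $X$. Dually, the Euler-type hypothesis on the inverse action at $y$ yields an open dense minus-cell $U_y$ at $y$ with scalar action of weight $-1$. Consequently $X$ is covered by two $\BC^*$-invariant affine cells, and the gluing is a $\BC^*$-equivariant birational self-map of $\BA^n$ that is homogeneous of degree $-1$ -- a generalized inversion, reminiscent of the Cayley transform on tube domains.

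Next I would analyse the closures of one-dimensional $\lambda$-orbits. Each is a rational curve joining $x$ to $y$, and letting $(x,y)$ range in the prescribed open subset of $X \times X$ produces a covering family. A bandwidth-versus-degree argument, using $\rho(X) = 1$ together with the $\pm 1$ weight constraints at $x$ and $y$, should pin down the $L$-degree (where $L$ is the ample generator of $\Pic(X)$) and identify this family as a minimal covering family of rational curves. Fixing $x$ generic and varying $y$, the tangent directions at $x$ sweep out the variety of minimal rational tangents $\CC_x \subset \BP(T_x X)$. The inversion identified above should then present $\CC_x$ as the projective variety of ``rank-one'' elements for a Jordan-type structure on $T_x X$, matching the VMRT of one of the five IHSS of tube type of Theorem \ref{small_bandwidth}.

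The central obstacle I anticipate is this last recognition step: turning the formal $\BC^*$-inversion into a bona fide Jordan triple, and verifying that $\CC_x$ is projectively isomorphic to the VMRT of a specific IHSS of tube type. A secondary technical point is to rule out reducible or higher-degree orbit closures, which should follow from Picard number one combined with the simultaneous Euler-type constraint at both endpoints. Once both are in hand, Mok's (or Hong--Mok's) VMRT-based characterization of IHSS, via Cartan--Fubini extension of the local isomorphism along minimal rational curves, will force $X$ to be IHSS of tube type and pinpoint it on the list of Theorem \ref{small_bandwidth}.
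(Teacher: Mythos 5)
Your ``only if'' direction is essentially the paper's argument: take the equalized action on an IHSS of tube type with isolated source and sink (Theorem \ref{small_bandwidth} / Proposition 4.3) and conjugate; the paper makes the genericity precise by noting that $(u,v)\mapsto (uv\cdot x, u\cdot y)$ on $R_{u}(P)\times R_{u}(P^{-})$ has dense constructible image in $X\times X$. That half is fine.

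The ``if'' direction, however, has a genuine gap, and it is exactly the step you flag as the ``central obstacle'': you give no method for recognizing $\CC_{x}\subset \BP T_{x}X$ as the VMRT of an IHSS, and the route you sketch toward it does not work as stated. Your plan is to take closures of one-dimensional $\BC^{*}$-orbits joining $x$ to $y$ and treat them as a minimal covering family whose tangents at $x$ sweep out $\CC_{x}$. But under the two Euler-type hypotheses the general orbit closure through $x$ and $y$ has $\CL$-degree $r$ (the rank of the system of fundamental forms), not $1$: in the embedding of Section 2 it is the curve $z\mapsto [e_{0}+z\Pi_{1}(w)+\cdots+z^{r}\Pi_{r}(w,\dots,w)]$, a conic already for a quadric. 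The minimal rational tangents at $x$ are instead the directions $w$ in the base locus of the fundamental forms, whose orbit closures have sink on the boundary divisor rather than at $y$ (Proposition \ref{vmrt_nond}); so your family is not the minimal one, and the ``Jordan-type structure'' you hope to extract from the inversion is never constructed. The paper closes this gap by a completely different mechanism: it lifts the two vector group actions of $T_{x}X$ and $T_{y}X$ to linear actions $\rho_{x},\rho_{y}$ on $V=H^{0}(X,\CL)^{\vee}$, uses commutators $[\,d\rho_{y}(\beta),d\rho_{x}(\alpha)\,]$ to produce an injection of $T_{y}X$ into $\fa ut(\hat{\CC_{x}})^{(1)}$, concluding $\dim \fa ut(\hat{\CC_{x}})^{(1)}=\dim X$ (Theorem \ref{non_vmrt_}); it then invokes the Fu--Hwang classification of nondegenerate smooth subvarieties with nonzero prolongation and eliminates every non-IHSS item on that list by the dimension estimates of Proposition \ref{non_zero2} (including the delicate codimension-2 section of $\BS_{5}$). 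Only after that does the $\BC^{*}$-equivariant Cartan--Fubini argument you mention apply, and tube type is then read off from the isolated source and sink via Proposition 4.3(ii). Without a substitute for the prolongation computation and the classification-plus-dimension-count, your outline does not yield the theorem.
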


	\subsection{Outline of the proof}

	The main ingredient of the proof is the VMRT theory developed by Hwang and Mok.   Let $X$ be a Fano manifold of Picard number one and let $\CK$ be a fixed irreducible dominant family of minimal rational curves on $X$. The variety of minimal rational tangents (VMRT for short) at a general point $x \in X$ is the closed subvariety $\CC_{x} \subseteq \BP T_{x}X$ consisting of all tangent directions at $x$ of curves in $\CK$ passing through $x$. A large part of the global geometry of the manifold is controlled by the VMRT $\CC_{x} \subseteq \BP T_{x}X$ at a general point $x$. The first step is to show that $X$ in Theorem 1.2 is an equivariant compactification of vector group hence $X$ can be recovered from its VMRT by Cartan-Fubini type theorem  \cite[Theorem 1.2]{Cartan_fubini}.  
	Then we follow the methods developed in \cite{prog_02} \cite{nonzero_prog} to classify the projective subvariety $\CC_{x} \subseteq \BP T_{x}X$ by studying its prolongaion of infinitesimal linear automorphisms. 
	\begin{defn}
		(1) Let $\mathfrak{g} \subseteq \mathfrak{g}l(V)$ be a Lie subalgebra, then the k-th prolongation of $\mathfrak{g}$ is the space of symmetric multi-linear homomorphisms $A: Sym^{k+1}V \rightarrow V$ such that for any fixed $v_{1},...,v_{k} \in V$, the homomorphism $A_{v_{1},...,v_{k}}:V \rightarrow V$ defined by:
		\begin{equation*}
			v \in V \rightarrow A(v,v_{1},...,v_{k}) \in V
		\end{equation*}	
		is in $\mathfrak{g}$.\par 
		(2) Let $S \subseteq \BP V$ be a projective subvariety. Let $\hat{S} \subseteq V$ be its affine cone and $T_{\alpha}(\hat{S})$ the tangent space at a smooth point $\alpha \in \hat{S}$. The Lie algebra of infinitesimal linear automorphisms of $\hat{S}$ is
		\begin{equation*}
			\mathfrak{a}ut(\hat{S}) =\{ g \in End(V)| g(\alpha) \in T_{\alpha}(\hat{S}) \, \text{for any smooth point $\alpha \in \hat{S} $} \}=\{g \in End(V) |  exp(tg) \cdot \hat{S} \subset \hat{S}, \, t \in \BC \}.
		\end{equation*}
		Its k-th prolongation $\mathfrak{a}ut(\hat{S} )^{(k)}$ will be called the k-th prolongation of $S \subseteq \BP V$.
	\end{defn}
	A crucial step of our proof is the following result.
	\begin{thm}\label{non_vmrt_}
		Let $X$ be a smooth projective variety of Picard number one. Assume for a general pair of points $(x,y) \in X \times X$ , there is a $\BC^{*}$-action on $X$ which is of Euler type at  $x$ and its inverse action is of Euler type at $y$, then for the VMRT $\CC_{x} \subseteq \BP T_{x}X$ we have:
		\begin{equation}
			dim(\mathfrak{a}ut(\hat{\CC_{x}})^{(1)})=dim(X)
		\end{equation}
	\end{thm}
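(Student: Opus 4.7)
The plan is to translate the abundance of Euler-type $\BC^{*}$-actions into a sharp count of the first prolongation of the VMRT. Fix a general point $x\in X$ and a dense open $U\subset X$ such that for every $y\in U$ there is a $\BC^{*}$-action $\lambda_{y}$ on $X$ of Euler type at $x$ whose inverse is of Euler type at $y$; write $\xi_{y}\in H^{0}(X,T_{X})$ for its infinitesimal generator. The argument proceeds in three stages: (i) exhibit $X$ as an equivariant compactification of $(\BC^{n},+)$; (ii) from each $\xi_{y}$ extract a tensor $Q_{y}\in \mathfrak{a}ut(\hat{\CC_{x}})^{(1)}$ and conclude $\dim \mathfrak{a}ut(\hat{\CC_{x}})^{(1)}\geq \dim X$; (iii) obtain the matching upper bound via the Euler grading of $\mathfrak{a}ut(X)$.

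For stage (i), the Bia\l ynicki--Birula decomposition associated to $\lambda_{y}$ identifies the open plus cell at $x$ with $T_{x}X\cong \BC^{n}$ as a $\BC^{*}$-variety, furnishing linear coordinates $(z_{1},\ldots,z_{n})$ on an affine open cell of $X$. Each $\xi_{y}$ vanishes at $x$ with linear part equal to the Euler field $E=\sum_{i} z_{i}\,\partial/\partial z_{i}$, so all the differences $\xi_{y}-\xi_{y_{0}}$ vanish to order two at $x$. Tracking their dynamics at the second fixed point $y$ together with the semisimplicity of the actions $\lambda_{y}$, these differences span an $n$-dimensional abelian subalgebra of $H^{0}(X,T_{X})$ that acts on the open cell by translations, which integrates to a $\BC^{n}$-action on $X$ with open orbit.

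For stage (ii), the equivariant compactification structure combined with Picard number one forces $\xi_{y}$ to be polynomial of degree at most $2$ on the open cell; its linear part is $E$, and the Euler-type condition for $-\xi_{y}$ at $y$ determines the quadratic part as the symmetric tensor $Q_{y}\in \Hom(S^{2}T_{x}X,T_{x}X)$ characterised by $Q_{y}(y,v)=-v$ for all $v\in T_{x}X$. Since $\xi_{y}\in H^{0}(X,T_{X})$ preserves the minimal family $\CK$, its flow preserves the VMRT $\CC_{x}\subset \BP T_{x}X$; unwinding the definitions shows $Q_{y}\in \mathfrak{a}ut(\hat{\CC_{x}})^{(1)}$. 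The defining equation varies non-trivially in $y$, so the differential of $y\mapsto Q_{y}$ has rank $n$, yielding $\dim \mathfrak{a}ut(\hat{\CC_{x}})^{(1)}\geq \dim X$.

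For stage (iii), the Euler field extends to a global vector field whose adjoint action grades $\mathfrak{a}ut(X)=\bigoplus_{k}\mathfrak{g}_{k}$: the $\BC^{n}$-translations of stage (i) sit in $\mathfrak{g}_{-1}$ (of dimension $n$), Taylor expansion at $x$ injects $\mathfrak{g}_{+1}\hookrightarrow \mathfrak{a}ut(\hat{\CC_{x}})^{(1)}$, and the $Q_{y}$'s show this injection is a bijection. The $\BC^{*}$-symmetry $\lambda_{y}$ pairs $\mathfrak{g}_{-1}$ with $\mathfrak{g}_{+1}$ via the exchange of $x$ and $y$, forcing $\dim \mathfrak{g}_{+1}=\dim \mathfrak{g}_{-1}=n$ and completing the equality. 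The step I expect to be the main obstacle is stage (i): constructing an $n$-dimensional space of vector fields is easy, but showing that the $\xi_{y}-\xi_{y_{0}}$ actually commute and exponentiate to a genuine $\BC^{n}$-action requires essential use of the rigidity of minimal rational curves and Cartan--Fubini style extension arguments; closely linked is the subtle assertion in stage (ii) that $\xi_{y}$ is polynomial of degree at most two on the open cell, which rests on the boundary geometry of the compactification.
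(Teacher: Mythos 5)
Your overall strategy (realize $X$ as a vector group compactification, extract elements of $\mathfrak{a}ut(\hat{\CC_{x}})^{(1)}$ from the quadratic parts of the Euler vector fields, then match dimensions) is the same skeleton as the paper's, but each of the three stages has a genuine gap, and two of them are fatal as written. In stage (i), the claim that the differences $\xi_{y}-\xi_{y_{0}}$ ``act on the open cell by translations'' cannot be right in the coordinates you set up: a translation field on $C^{+}(x)\cong T_{x}X$ does not vanish at $x$, whereas your differences vanish there to order two (in the model $\BP^{1}$ they are multiples of $z^{2}\partial_{z}$, i.e.\ translations of the \emph{opposite} cell, whose identification moreover varies with $y$). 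Proving that these fields commute, are independent, and integrate to a $\BC^{n}$-action is exactly the hard content, and Cartan--Fubini does not produce it; the paper sidesteps this entirely by quoting Fu--Hwang's theorem on Euler-symmetric varieties (Proposition 2.4), which already yields the vector group compactification from the Euler-type action at the single general point $x$, together with explicit linearizations $\rho_{x},\rho_{y}$ on $V=H^{0}(X,\CL)^{\vee}$ built from the fundamental forms. In stage (ii), both the degree-$\leqslant 2$ polynomiality of $\xi_{y}$ on the cell and, crucially, the assertion that ``the differential of $y\mapsto Q_{y}$ has rank $n$'' are stated without proof. That rank statement is the heart of the matter: it is the injectivity of the paper's map $\lambda$ (Proposition 3.6), whose proof is a nontrivial contradiction argument showing that a kernel element would force $\CC_{x}$ to lie in a hyperplane of $\BP T_{x}X$, contradicting the non-degeneracy of the VMRT (Proposition 3.3). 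Your sketch never uses non-degeneracy of $\CC_{x}$, so it cannot rule out the degenerate behaviour that would kill the lower bound.

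Stage (iii) is also unfounded: there is no automorphism ``exchanging $x$ and $y$'' available from the hypotheses, so the claimed pairing forcing $\dim\fg_{+1}=\dim\fg_{-1}$ has no basis, and even granting it, surjectivity of the Taylor-expansion map $\fg_{+1}\hookrightarrow\mathfrak{a}ut(\hat{\CC_{x}})^{(1)}$ is precisely the missing upper bound -- the prolongation is defined purely from the projective geometry of $\CC_{x}\subset\BP T_{x}X$ and could a priori exceed what global vector fields on $X$ provide. The paper obtains the upper bound $\dim\mathfrak{a}ut(\hat{\CC_{x}})^{(1)}\leqslant\dim T_{x}X$ from Hwang--Mok's theorem that for an irreducible nondegenerate subvariety the first prolongation injects into $(T_{x}X)^{\vee}$ (cited as \cite[Theorem 1.1.3]{prog_02}); some substitute for this input is indispensable, and your proposal contains none.
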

	The identity (1.1) is known to be valid for any IHSS. Let $X=G/P$ be an IHSS defined by a semi-simple algebraic group $G$ and a maximal parabolic subgroup $P$. Denote by $P^{-}$ the opposite group of $P$ and let $x=eP$, $y=\dot{w}_{0}x$ where $w_{0}$ is the longest element in the Weyl group. Then there is an equalized $\BC^{*}$-action on $X$ such that $x$ is an isolated extremal fixed point. And $\fa ut( \hat{\CC_{x}} )^{(1)} $ can be  identified with the Lie algebra of $R_{u}(P)$,  induced by the $\BZ / 2 \BZ$-grading of the Lie algebra $\mathfrak{g}=Lie(G)$. When X is of tube type, then $y$ is also an isolated extremal fixed point of the $\BC^{*}$-action and X is an equivariant compactification of $R_{u}(P)$ with open orbit $R_{u}(P) \cdot y$. Thus in this case the nonzero prolongation of  $\fa ut(\hat{\CC_{x}})$ comes from the vector group compactification on $X$ with origin $y$.\par 
	Generally assume that $X$ satisfies the conditions of Theorem 1.2. Then the vector groups action of $T_{y}X$ and $T_{x}X$ on themselves can be extended to  equivariant compactifications of vector groups on $X$.  Choosing a suitable non-degenrate projective embedding of $X \subseteq \BP V$. The actions of $T_{y}X$ and $T_{x}X$ on $X$ can be lifted to linear actions on $V$. Identifying  $Lie(T_{y}X),Lie(T_{x}X)$ with their images in $\fg l(V)$,  the adjoint actions of the two Lie subalgebras inside $\fg l(V)$ will induce the identification: $Lie(T_{y}X) \cong \mathfrak{a}ut(\hat{\CC_{x}})^{(1)}$.  \par 
	In \cite{nonzero_prog} Fu and Hwang classified irreducible non-degenrate nonsingular projective subvariety $S \subseteq \BP V$ with nonzero prolongations. Most of them are the VMRT of an  IHSS or a nonsingular linear section of some IHSS.  
	Using a case by case  calculation of $dim(\mathfrak{a}ut(\hat{S})^{(1)})$, we show that $\CC_{x} \subseteq \BP T_{x}X$ is projectively isomorphic to the VMRT of an IHSS. Then by a $\BC^{*}$-equivariant  Cartan-Fubini extension theorem, we conclude that $X$ is $\BC^{*}$-isomorphic to an IHSS. The $\BC^{*}$-action on IHSS was studied in detail in \cite{ORSW_2021}, as a corollary it has two isolated extremal fixed points if and only if it is isomorphic to an IHSS of tube type.\par 
	The article is organized as follows. In Section 2 we first review  the approach  in \cite{euler_sym} to associate $\BC^{*}$-actions with vector group actions, then we apply it to the case when the $\BC^{*}$-action has two isolated extremal fixed points. In Section 3 we study the prolongations of  projective varieties, we first prove Theorem 1.4  then we calculate the dimension of prolongation for certain projective subvariety with nonzero prolongations. In Section 4 we finish the proof of our main result.
	\subsection*{Notations}
	Throughout this article we work over the field of complex numbers. Given a line bundle $\CL$ on a variety $X$, the principal open subset of a section $s \in H^{0}(X,\CL)$ is denoted by $D_{+}(s)=\{x \in X: s(x) \not=0\}$, and the cycle-theoretic zero locus of $s$ is denoted by $Z(s)$. For a vector space $V$ of dimension $n$, we identify the regular functions on $V$ with the total space of symmetric multilinear forms:
	$\BC[V] \cong \oplus_{k \geqslant 0} Sym^{k}(V^{*})$ by assigning a function $f$ on $V$ to $P_{f}=\sum_{k \geqslant 0} P_{f,k}$ such that $f(v)=\sum_{k \geqslant 0}P_{f,k}(v,v...,v)$.
	\section{From $\BC^{*}$-action to vector group action}
	First we recall some basic facts and notions on $\BC^{*}$-actions.
	Let $X$ be a smooth projective variety with a $\BC^{*}$-action.  
	For a nontrival $\BC^{*}$-orbit of a point $z$ on $X$, the orbit map $\psi_{z}:\BC^{*} \rightarrow X$ extends uniquely to a morphism $\Psi_{z}:\BP^{1} \rightarrow X$. We denote the source (resp. sink) of the orbit to  be $\lim_{t \rightarrow 0}tz=\Psi_{z}(0)$ (resp. $\lim_{t \rightarrow \infty} tz=\Psi_{z}(\infty)$). Denote by $\CY$ the set of irreducible components of $X^{\BC^{*}}$. Then for each $Y \in \CY$, $Y$ is a smooth closed subvariety. The isotropy action of $\BC^{*}$ on $TX|_{Y}$ gives a decomposition of $TX|_{Y}=T^{+}(Y) \oplus T^{-}(Y) \oplus TY$,  where $T^{+}(Y),T^{-}(Y)$ are the subbundles of $TX|_{Y}$ on which $\BC^{*}$ acts with positive, negative weights. Denote $C^{\pm}(Y)=\{ x \in X: \lim_{t^{\pm 1} \rightarrow 0} tx  \in Y \}$. We recall the following theorem of  Bia\l ynicki-Birula  \cite{BBthm}.
	\begin{thm} Assume that $X$ is a smooth projective variety with a $\BC^{*}$-action, then:\par 
		(1) For each $Y \in \CY$, $C^{\pm}(Y)$ are locally closed subsets and there are decompositions: 
		\begin{equation*}
			X=\bigcup\limits_{Y \in \CY} C^{+}(Y)=\bigcup\limits_{Y \in \CY} C^{-}(Y)
		\end{equation*}
		(2)  For each $Y \in \CY$, there are $\BC^{*}$-isomorphisms $C^{+}(Y) \cong T^{+}(Y)$ and $C^{-}(Y) \cong T^{-}(Y)$ lifting the natural map $C^{\pm}(Y) \rightarrow Y$.The map $C^{\pm}(Y) \rightarrow Y$ is algebraic and is a $\BC^{v^{\pm}(Y)}$-fibration, where we set $v^{\pm}(Y)=rank(T^{\pm}(Y))$. \par 
	\end{thm}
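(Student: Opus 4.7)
The plan is to prove the Białynicki-Birula decomposition by reducing everything to a local analysis near the fixed components and then globalizing via properness. Since $X$ is smooth and projective, the orbit map $\psi_{z}:\BC^{*}\ra X$ of any $z$ extends uniquely to $\Psi_{z}:\BP^{1}\ra X$ (as stated in the paragraph preceding the theorem), so both the source $\lim_{t\ra 0}tz$ and sink $\lim_{t\ra\infty}tz$ exist and lie in $X^{\BC^{*}}$. This already yields the set-theoretic decomposition $X=\bigcup_{Y\in\CY}C^{+}(Y)=\bigcup_{Y\in\CY}C^{-}(Y)$ in part (1).

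Next I would localize around a chosen component $Y\in\CY$. By Sumihiro's equivariant embedding theorem every point of $Y$ has a $\BC^{*}$-invariant affine open neighborhood, and by the linearization of reductive actions at a smooth fixed point one may choose \emph{equivariant} \'etale coordinates on $X$ along $Y$ that identify a neighborhood of $Y$ with a neighborhood of the zero section in the total space of $T^{+}(Y)\oplus T^{-}(Y)\oplus TY$, with $\BC^{*}$ acting linearly on the fibres by the weight decomposition. In these local coordinates a point $(v^{+},v^{-},y)$ lies in $C^{+}(Y)$ iff $v^{-}=0$, which immediately shows:
\begin{itemize}
\item $C^{+}(Y)$ is locally closed in $X$,
\item the map $C^{+}(Y)\ra Y$ sending $x\mapsto \Psi_{x}(0)$ is a morphism and is locally trivial with fibre $\BC^{v^{+}(Y)}$,
\end{itemize}
and symmetrically for $C^{-}(Y)$. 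The algebraicity of the limit map $\Psi_{(-)}(0)$ globally follows because the extended orbit maps $\Psi_{x}$ form a flat family over $C^{+}(Y)$ and evaluation at $0\in\BP^{1}$ is algebraic.

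To finish part (2) I would upgrade the local affine fibration to the global $\BC^{*}$-isomorphism $C^{+}(Y)\cong T^{+}(Y)$. Consider the morphism $\Phi:C^{+}(Y)\ra T^{+}(Y)$ obtained by combining the limit projection to $Y$ with the derivative of $\Psi_{x}$ at $0$ in the direction transverse to $Y$; this is $\BC^{*}$-equivariant by construction. The local description above shows $\Phi$ is an isomorphism on a $\BC^{*}$-invariant open neighborhood of the zero section, and $\BC^{*}$-equivariance together with the fact that every orbit in $C^{+}(Y)$ meets this neighborhood (letting $t\ra 0$) propagates the isomorphism to all of $C^{+}(Y)$. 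The same argument with the inverse action handles $C^{-}(Y)$.

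The main obstacle is precisely this last step: going from a locally trivial affine fibration to a globally linear structure identified with the normal directions $T^{\pm}(Y)$. The obstruction to global linearization is the potential nonlinearity of higher-order coordinate changes along $Y$; the crucial input is that the $\BC^{*}$-action \emph{rescales} all transverse coordinates with nonzero weight, so the flow $t\ra 0$ (resp. $t\ra\infty$) kills all higher-order terms and canonically produces the linearizing isomorphism $\Phi$. Once this is in place the conclusion that $C^{\pm}(Y)\ra Y$ is a $\BC^{v^{\pm}(Y)}$-fibration is immediate.
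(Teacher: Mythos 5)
First, note that the paper does not prove this statement at all: it is quoted verbatim (with reference \cite{BBthm}) as Bia\l ynicki-Birula's classical decomposition theorem, so there is no internal proof to compare yours with; what matters is whether your sketch would stand on its own as a proof of the classical result.

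As it stands it has two genuine gaps, both at the places where the real content of the theorem lives. (a) The asserted ``equivariant \'etale coordinates along $Y$'' identifying a neighborhood of the whole component $Y$ with a neighborhood of the zero section of $T^{+}(Y)\oplus T^{-}(Y)\oplus TY$ is an algebraic equivariant tubular-neighborhood statement that does not follow from Sumihiro's theorem plus linearization at a single fixed point; Sumihiro gives invariant quasi-affine opens and Luna-type slices give pointwise (\'etale-local) linear models, but the uniform identification along all of $Y$ is essentially equivalent to the theorem you are trying to prove, and in Bia\l ynicki-Birula's argument it is replaced by a graded analysis of the coordinate rings of invariant affine opens (the non-negative weight part of $\BC[U]$ cutting out $C^{+}(Y)\cap U$), not by a tubular neighborhood. (b) Your linearizing map $\Phi$ is defined via the derivative of $\Psi_{x}$ at $0$, which only records the weight-$1$ part of the normal directions: for the action $t\cdot(a,b)=(ta,t^{2}b)$ the orbit through $(0,1)$ has $\Psi'(0)=0$, so $\Phi$ as defined is not injective whenever $T^{+}(Y)$ has weights $\geqslant 2$. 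The closing remark that the flow ``kills all higher-order terms and canonically produces $\Phi$'' does not repair this: coordinates of different weights mix under the flow (a weight-$2$ linear coordinate and the square of a weight-$1$ coordinate rescale identically), so the flow alone does not single out a linear structure on the fibres, and this is exactly the difficulty that the actual proofs (BB's original graded-ring argument, or the modern functorial treatments) are designed to overcome. Your propagation step (using equivariance and the fact that every orbit in $C^{+}(Y)$ enters any invariant neighborhood of $Y$ to spread injectivity and surjectivity from a neighborhood of the zero section to all of $C^{+}(Y)$) is fine once a correct equivariant $\Phi$ and a correct local model are in hand, and the set-theoretic decomposition in (1) via extending orbit maps to $\BP^{1}$ is also fine; the missing pieces are precisely the local model and the construction of $\Phi$.
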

	We call the unique $Y$ such that $C^{+}(Y)$ (resp. $C^{-}(Y)$) is dense  the source (resp. sink) of the action.  For any two components $Y,Y' \in \CY$, we call $Y \prec Y'$ if there exists a point $x \in X$ such that $lim_{t \rightarrow 0} t \cdot x \in Y$ and $lim_{t \rightarrow \infty} t \cdot x \in Y'$. 
	\begin{prop}\cite[Lemma 2.8 ]{small_bandwidth} and \cite[Lemma 3.5]{adjunction_c*}\\
		Let $X$ be a smooth projective variety of Picard number one. Assume that there is a $\BC^{*}$-action on $X$ which is of Euler-type at  $x$, then there is a unique $Y \in \CY$ such that
		\begin{equation}
			\{Y' \in \CY:  Y' \prec Y   \}=\{x\}.
		\end{equation}
		Moreover $C^{-}(Y)$ is a line bundle over $Y$, i.e., $v^{-}(Y)=1$.
	\end{prop}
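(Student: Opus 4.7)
Up to replacing $\BC^{*}$ by its inverse, I may assume the Euler-type weight at $x$ equals $+1$, so $\{x\}$ is the source of the action and $C^{+}(x) \cong T_{x}X$ is Zariski dense in $X$. The plan is to construct the candidate $Y$ directly from the $\BC^{*}$-orbits through $x$. Each nontrivial such orbit has closure a rational curve connecting $x$ to a uniquely determined sink point, and these orbit closures are naturally parametrized by $\BP T_{x}X$. I will then consider the sink map
\[
\sigma: \BP T_{x}X \lra X^{\BC^{*}}, \qquad [v] \lto \lim_{t \to \infty} t \cdot \phi(v),
\]
where $\phi: T_{x}X \xrightarrow{\sim} C^{+}(x)$ is the Bia\l ynicki-Birula isomorphism. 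Since $\sigma$ is algebraic and $\BP T_{x}X$ is irreducible, its image lies in a single irreducible component $Y \in \CY$, and this is my candidate.

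Uniqueness is then immediate: any component $Y_{1} \in \CY$ satisfying $\{Y' : Y' \prec Y_{1}\} = \{x\}$ must receive an orbit from $x$ and hence coincides with $Y$. For the remaining claims, I would first observe that $Y$ is automatically the sink of the action: every $z \in C^{+}(x) \setminus \{x\}$ lies on an orbit from $x$ whose sink is in $Y$, so $C^{+}(x) \setminus \{x\} \subseteq C^{-}(Y)$, forcing $C^{-}(Y)$ to be dense and $\dim Y + v^{-}(Y) = \dim X$. The generic point of $C^{-}(Y)$ therefore lies in $C^{+}(x)$, so $\sigma: \BP T_{x}X \to Y$ is dominant and, being a morphism between proper varieties, surjective. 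For each $y \in Y$, the fiber $F_{y}$ of $C^{-}(Y) \cong T^{-}(Y) \to Y$ is an affine space of dimension $v^{-}(Y)$ on which $\BC^{*}$ acts by scalar with $y$ as sink. If $v^{-}(Y) = 1$, then $F_{y} \setminus \{y\}$ is a single orbit, and by the surjectivity of $\sigma$ its source must be $x$; this gives simultaneously the line bundle property and the structural equality $\{Y' : Y' \prec Y\} = \{x\}$. Conversely, if $v^{-}(Y) \geq 2$, then the $(v^{-}(Y)-1)$-dimensional boundary $\overline{F_{y}} \setminus F_{y}$ must contain a $\BC^{*}$-fixed component $Y' \neq \{x\}$ (otherwise orbits inside this boundary would have sink $x$, contradicting that $x$ is the source), producing a component $Y' \neq \{x\}$ with $Y' \prec Y$. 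Thus the two claims of the proposition are in fact equivalent.

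The main obstacle is therefore to establish $v^{-}(Y) = 1$ using the Picard-number-one hypothesis. My approach would be numerical: the orbits from $x$ have anticanonical degree $n = \dim X$ (the sum of the weights on $T_{x}X$, all equal to $+1$) and form an irreducible family whose numerical class is a fixed multiple of the minimal class in $H_{2}(X, \BZ) \cong \BZ$. The possibility $v^{-}(Y) \geq 2$ would yield, for a general $y \in Y$, a positive-dimensional family of such orbits from $x$ to the single point $y$, and I would argue that this is incompatible with the Picard-number-one constraint together with the irreducibility of the orbit family, since through a pair of general points only a bounded family of rational curves in a fixed numerical class can exist. Making this quantitative step precise is exactly the content of \cite[Lemma 2.8]{small_bandwidth} and \cite[Lemma 3.5]{adjunction_c*}, which I would invoke to conclude.
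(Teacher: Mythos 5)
Your central construction breaks down, and the geometric picture it produces actually contradicts the statement being proved. The limit map $\sigma\colon \BP T_{x}X \to X^{\BC^{*}}$, $[v]\mapsto \lim_{t\to\infty}t\cdot\phi(v)$, is well defined set-theoretically but it is \emph{not} a morphism: Bia\l ynicki-Birula limits are algebraic only on each stratum $C^{-}(Y)$, and the sink of an orbit jumps as the orbit degenerates, so the inference ``$\sigma$ algebraic, $\BP T_{x}X$ irreducible, hence the image lies in one component'' is invalid, and so is the identification of your $Y$ with the sink of the action. A concrete counterexample is $X=\BQ^{n}$, $n\geq 3$, written as $z_{0}z_{n+1}=q(z_{1},\dots,z_{n})$ with the standard equalized action of weights $0,1,2$, which is of Euler type at the source $x=[1:0:\dots:0]$; the fixed components are $\{x\}$, $Q\cong \BQ^{n-2}$ and the isolated sink $\{y\}$. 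Directions with $q\neq 0$ flow from $x$ to $y$, while directions on the cone $q=0$ (the cone over $\CC_{x}$) flow from $x$ into $Q$: thus $\sigma$ is constant on a dense open set but not constant, its image meets two components, $C^{+}(x)\setminus\{x\}$ is contained in no single $C^{-}(Y)$, and both $Q$ and $\{y\}$ receive orbits from $x$, which also invalidates your uniqueness argument. Worse, the component the proposition asserts is the \emph{intermediate} one, not the sink: $\{Y'\in\CY : Y'\prec \{y\}\}=\{\{x\},Q\}$, whereas $\{Y'\in\CY : Y'\prec Q\}=\{x\}$ and $C^{-}(Q)$ is a line bundle over $Q$ and in particular not dense. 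So the whole chain ``$Y$ is the sink, $C^{-}(Y)$ is dense, $\sigma$ is surjective'', and the claimed equivalence of the two conclusions, collapse; in the setting of the paper's main theorem the sink is even an isolated point and could never be the $Y$ of the proposition once $\dim X\geq 2$.

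Independently of this, the decisive quantitative step is never actually carried out. The only place where $\rho(X)=1$ enters your argument is the heuristic about bounded families of orbit closures through $x$ and a general point of $Y$, and even that sketch is shaky (orbit closures through $x$ need not lie in one numerical class of fixed anticanonical degree: in the quadric example they are lines and conics); in the end you propose to invoke \cite[Lemma 2.8]{small_bandwidth} and \cite[Lemma 3.5]{adjunction_c*} to ``make the step precise'', but those lemmas \emph{are} the statement to be proved, so this is circular. Note also that the paper offers no proof of this proposition --- it imports it from exactly those references --- so your write-up has to stand on its own, and as it stands neither the existence of a component $Y$ with $\{Y'\in\CY : Y'\prec Y\}=\{x\}$ nor $v^{-}(Y)=1$ is established. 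Any genuine argument must use Picard number one in an essential way to single out the relevant component inside the boundary divisor $D_{x}=X\setminus C^{+}(x)$ (compare how $\rho(X)=1$ is used in the proof of Lemma 2.7 of the paper) and then bound the rank of $T^{-}(Y)$; nothing in the proposal does this.
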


	\par 
	To associate $\BC^{*}$-actions with vector group actions we recall  Euler-symmetric varieties defined by Fu and Hwang in \cite{euler_sym}.
	\begin{defn}
		Let $Z \subseteq \BP V$ be a projective subvariety. A $\BC^{*}$-action on $Z$ is called of Euler-type at a nonsingular point $x$ if the isotropic action on the tangent space $T_{x}Z$ is by scalar multiplication. We say $Z \subseteq \BP V$ is an Euler-symmetric variety if for a general point $x \in Z$, there exists a $\BC^{*}$-action which is of Euler type at x, where the $\BC^{*}$-action comes from a multiplicative subgroup of $GL(V)$.
	\end{defn}
	From \cite[Theorem 3.7]{euler_sym} any Euler-symmetric variety is an equivariant compactification of vector group. Moreover we have the following by \cite[Theorem 3.7]{euler_sym}.
	\begin{prop}
		Assume $X$ to be a normal projective variety and  let $\CL$ be a very ample line bundle on $X$, then the followings are equivalent:\par 
		(1) For a general point $x$ on $X$, there is a $\BC^{*}$-action which of Euler type at $x$.\par 
		(2) $X$ is an equivariant compactification of vector group and the scalar multiplication of $\BC^{*}$ on the vector group can be extended to a $\BC^{*}$-action on $X$. \par 
		(3) The projective subvariety $X \subset \BP H^{0}(X,\CL)^{\vee}$ is Euler-symmetric.
	\end{prop}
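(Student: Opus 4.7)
The plan is to prove the cycle $(3) \Rightarrow (1) \Rightarrow (2) \Rightarrow (3)$, following the approach of \cite{euler_sym}. The implication $(3) \Rightarrow (1)$ is immediate from Definition 2.3: the $\BC^{*}$-subgroup of $\GL(H^{0}(X,\CL)^{\vee})$ witnessing Euler-symmetry preserves $X$ and restricts to a $\BC^{*}$-action on $X$ of Euler type at a general point.

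For $(1) \Rightarrow (2)$, let $\phi \colon \BC^{*} \hookrightarrow G := \Aut^{0}(X)$ realize a $\BC^{*}$-action of Euler type at some general $x \in X$. Since every weight on $T_{x}X$ equals $+1$, the point $x$ is isolated in $X^{\BC^{*}}$ with $v^{+}(x) = \dim X$, so Proposition 2.2 furnishes a $\BC^{*}$-equivariant isomorphism $T_{x}X \xrightarrow{\sim} C^{+}(x)$ onto a Zariski open dense subset of $X$ on which $\BC^{*}$ acts by scalar multiplication. The natural translation action of $(T_{x}X,+)$ on itself transports to $C^{+}(x)$, and the task is to globalize it to an algebraic action on all of $X$. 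I would do this by letting $\phi(\BC^{*})$ act on $\mathfrak{g} := \Lie(G)$ by conjugation, producing a weight decomposition $\mathfrak{g} = \bigoplus_{k \in \BZ} \mathfrak{g}_{k}$; the $\BC^{*}$-equivariant differential of the orbit map $g \mapsto g \cdot x$ restricts to a surjection $\mathfrak{g}_{1} \twoheadrightarrow T_{x}X$ whose kernel lies in the stabilizer Lie algebra. Since the tangent action at $x$ carries no weight $\geqslant 2$ component, $[\mathfrak{g}_{1},\mathfrak{g}_{1}] \subseteq \mathfrak{g}_{2}$ must act trivially at $x$; hence $\mathfrak{g}_{1}$ descends to an abelian Lie algebra that integrates to a vector subgroup $V_{0} \subseteq G$ acting on $X$ with open orbit through $x$, normalized by $\phi(\BC^{*})$ which acts on $V_{0}$ by scalar multiplication.

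For $(2) \Rightarrow (3)$, form the connected solvable group $H := V_{0} \rtimes \BC^{*}$ acting on the normal projective variety $X$. By Mumford--Sumihiro equivariant linearization, $\CL$ admits an $H$-linearization (after replacing it by a sufficiently divisible positive power if necessary, which is harmless since Euler-symmetry is stable under Veronese re-embedding). This yields an $H$-equivariant closed embedding $X \hookrightarrow \BP H^{0}(X,\CL)^{\vee}$ in which $\BC^{*}$ acts through a multiplicative subgroup of the ambient general linear group and is of Euler type at every point of the open $V_{0}$-orbit, verifying (3).

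The main obstacle is the globalization step in $(1) \Rightarrow (2)$: upgrading the Bia\l ynicki-Birula picture near $x$ into an actual algebraic action of a commutative vector subgroup of $G$ with open orbit. Here the Euler-type hypothesis (every weight exactly $+1$, not merely positive) is indispensable; under the weaker assumption that the action is only equalized at $x$ one would at best produce a unipotent, possibly non-abelian, subgroup acting with open orbit, which need not compactify to a vector group action on $X$.
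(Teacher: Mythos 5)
Your implication $(3)\Rightarrow(1)$ is fine, but the heart of your argument, $(1)\Rightarrow(2)$, has a genuine gap. You assert that the $\BC^{*}$-equivariant differential of the orbit map restricts to a surjection $\mathfrak{g}_{1}\twoheadrightarrow T_{x}X$, where $\mathfrak{g}=\Lie(\Aut^{0}(X))$. Equivariance only tells you that the image of the full differential coincides with the image of $\mathfrak{g}_{1}$ (since $T_{x}X$ is pure of weight $1$); surjectivity is equivalent to $\Aut^{0}(X)\cdot x$ being open in $X$, and nothing in hypothesis (1) provides this. Each Euler-type $\BC^{*}$ contributes only a one-dimensional torus, and a priori $\Aut^{0}(X)$ could fail to have a dense orbit; producing the missing automorphisms --- the translations extending the additive action of $T_{x}X$ on $C^{+}(x)\cong T_{x}X$ --- is exactly the nontrivial content of the proposition, so at this step you are assuming what is to be proved. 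The paper itself does not argue inside $\Aut^{0}(X)$ at all: it cites Fu--Hwang (Theorem 3.7 of the Euler-symmetric varieties paper), whose construction is recalled in Section~2 here (Lemma 2.7, formula (2.3), Proposition 2.9): one linearizes the Euler $\BC^{*}$-action on $\CL$, uses the weight decomposition of $H^{0}(X,\CL)$ to identify the embedding of $C^{+}(x)$ with $u\mapsto[e_{0}+\Pi_{1}(u)+\cdots+\Pi_{r}(u,\dots,u)]$ in terms of the fundamental forms, and then defines the translation by $u'$ as the explicit linear operator $\rho_{x}(u')$ built from the nilpotent maps $\Gamma_{u'}$; the identity $\rho_{x}(u')\cdot f(u)=f(u+u')$ shows these linear maps preserve $X$ and extend the translations, with no a priori knowledge of the automorphism group.

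Two secondary points. Even granting surjectivity, the step ``$[\mathfrak{g}_{1},\mathfrak{g}_{1}]\subseteq\mathfrak{g}_{2}$ acts trivially at $x$, hence $\mathfrak{g}_{1}$ descends to an abelian Lie algebra that integrates to a vector subgroup'' is not justified: $\mathfrak{g}_{1}$ need not be a Lie subalgebra, and the bracket landing in the isotropy algebra does not allow you to form a quotient inside $\mathfrak{g}$ or to integrate a chosen complement. In $(2)\Rightarrow(3)$, the reduction ``replace $\CL$ by a sufficiently divisible power'' runs in the wrong direction: Euler-symmetry of the Veronese re-embedding does not formally give Euler-symmetry of $X\subset\BP H^{0}(X,\CL)^{\vee}$, since the latter requires the $\BC^{*}$ to lift to a multiplicative subgroup of $\GL(H^{0}(X,\CL)^{\vee})$ itself; fortunately no power is needed, because $\BC^{*}$ and vector groups have trivial Picard group and trivial obstruction, so every line bundle on a normal projective variety with such an action is linearizable, and with that fix this direction goes through.
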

	
In the following we always assume that $X$ is a smooth projective variety of Picard number one. We study $\BC^{*}$-actions on $X$ from the  perspective of local projective differential geometry following  \cite{proj_geo_homogenous} and \citen{euler_sym},
\begin{defn}\label{fund_syst}
	(1)	Let $x \in X \subseteq  \BP V$ be a nonsingular point of a nondegenerate projective variety, and let $\CL=\CO_{\BP V}(1)|_{X}$ be the line bundle on $X$.
	For each nonnegative integer $k$, $\mathfrak{m}_{x,X}^{k}$ be the $k$-th power of the maximal ideal $\mathfrak{m}_{x,X}$. For a section $s \in H^{0}(X,\CL)$, let $j_{x}^{k}(s)$ be the $k$-jet of $s$ at $x$ such that $j_{x}^{0}=s_{x} \in \CL_{x}$.  The induced homomorphism :
	\begin{equation*}
		(V^{*} \cap Ker(j_{x}^{k-1}))/(V^{*} \cap Ker(j_{x}^{k})) \rightarrow \CL_{x} \otimes Sym^{k}T_{x}^{*}X
	\end{equation*}
	is injective. For each $k \geqslant 2$, the subspace $\BF_{x}^{k} \subseteq Sym^{k}T_{x}^{*}X$ defined by the image of this homomorphism is called the $k$-th fundamental form of  $X$ at $x$. Set $F_{x}^{0}=\BC$ and $F_{x}^{1}=T_{x}^{*}X$, The collection of subspaces $\BF_{x}=\oplus_{k \geqslant 0} \BF_{x}^{k} \subset \oplus_{k \geqslant 0} Sym^{k}T_{x}^{*}X$ is called the system of fundamental forms of $X$ at $x$.\par 
	(2)  Let $W$ be a vector space. For $w \in W$, the contraction homomorphism  $\iota_{w}: Sym^{k+1}W^{*} \rightarrow Sym^{k}W^{*}$ sending $\phi  \in Sym^{k+1}W^{*}$ to $\iota_{w}\phi \in Sym^{k}W^{*}$ is defined by:
	\begin{equation*}
		\iota_{w}\phi(w_{1},...,w_{k})=\phi(w,w_{1},...,w_{k})
	\end{equation*}
	for any $w_{1},...,w_{k} \in W$. By convention we define $\iota_{w}(Sym^{0}W^{*})=0$. \par 
	(3) A subspace $\BF=\oplus_{k \geqslant 0} F^{k} \subset \oplus_{k \geqslant 0} Sym^{k}W^{*}$ with $F^{0}=\BC, F^{1}=W^{*}, F^{r} \not =0$, and $F^{r+i}=0$ for all $i \geqslant 1$ is called a $\text{symbol system of rank r}$ if $\iota_{w} F^{k+1} \subseteq F^{k}$ for any $w \in W$ and any $k \geqslant 0$.
\end{defn}
We recall the following theorem of Cartan (see for example \cite[Section 2.1]{proj_geo_homogenous}).
\begin{thm} 
	Let $X \subseteq \BP V$ be a nondegenerate subvariety, and  let $x \in X$ be a general point. Then the system of fundamental forms  $\BF_{x} \subset \oplus_{k \geqslant 0} Sym^{k}T_{x}^{*}X$  is a symbol system.
\end{thm}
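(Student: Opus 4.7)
The plan is to verify the three defining properties of a symbol system in sequence: the low-degree normalizations, the finite rank condition, and the contraction compatibility. The identifications $F_{x}^{0}=\BC$ and $F_{x}^{1}=T_{x}^{*}X$ are built into the definition. For the rank condition, nondegeneracy of $X \subseteq \BP V$ makes the restriction map $V^{*} \to H^{0}(X,\CL)$ injective, hence $\bigcap_{k \geq 0}(V^{*} \cap \Ker(j_{x}^{k})) = 0$; since $V^{*}$ is finite-dimensional, this decreasing filtration terminates at zero after finitely many steps, forcing $\BF_{x}^{k} = 0$ for all $k$ sufficiently large, and one takes $r$ to be the largest index with $\BF_{x}^{r} \neq 0$.

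The substantive content is the contraction property $\iota_{w} \BF_{x}^{k+1} \subseteq \BF_{x}^{k}$. My plan is to work in an affine chart of $\BP V$ centered at $x$ and adapted to a decomposition $V = \BC\widetilde{x} \oplus T_{x}X \oplus N$ into the line through $x$, a lift of the tangent direction, and a normal complement. In this chart $X$ is locally a graph $v_{j}=\phi_{j}(u_{1},\ldots,u_{n})$ with $\phi_{j} \in \mathfrak{m}_{0}^{2}$, and expanding $\phi_{j}=\sum_{l \geq 2} \phi_{j}^{(l)}$ into its homogeneous parts, a direct computation identifies
\[
\BF_{x}^{k} = \Bigl\{\sum_{j=1}^{r} b_{j}\, \phi_{j}^{(k)} : b \in \BC^{r},\ \sum_{j} b_{j}\, \phi_{j}^{(l)} = 0 \text{ for } 2 \leq l \leq k-1 \Bigr\}
\]
for $k \geq 2$. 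The key formal identity is that, when $b$ satisfies $\sum_{j} b_{j}\, \phi_{j}^{(l)} = 0$ for all $l \leq k$, the directional derivative $\partial_{w}(\sum_{j} b_{j}\, \phi_{j}(u))$ vanishes to order $k$ and its leading degree-$k$ term equals $(k+1)\,\iota_{w}(\sum_{j} b_{j}\, \phi_{j}^{(k+1)})$, so that the contraction is realized as the leading part of a directional derivative along $X$.

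To realize this directional derivative as the $k$-jet at $x$ of a genuine section $s' \in V^{*}$, my plan is to use the moving-frame formalism on the osculating flag $T_{x}^{(0)}X \subset T_{x}^{(1)}X \subset \cdots \subseteq \BP V$. A smoothly varying basis of $V$ over $X$ adapted to the osculating flag satisfies the Maurer--Cartan equations, whose structural coefficients encode the fundamental forms $\BF_{x}^{k}$ intrinsically; contracting the coefficient encoding $\BF_{x}^{k+1}$ with the tangent direction $w$ then produces, through the structure equations, a coefficient living in the block encoding $\BF_{x}^{k}$. The section $s'$ is then obtained by applying to $s$ a suitable infinitesimal linear automorphism $\xi \in \mathfrak{gl}(V)$ built from the moving-frame data, and $s\circ\xi$ lies in $V^{*}$ because $V^{*}$ is a $\mathfrak{gl}(V)$-representation.

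The main obstacle is the moving-frame / structure-equation computation itself: the block decomposition of $\xi$ along $V = \BC\widetilde{x} \oplus T_{x}X \oplus N$ must be chosen to interact correctly with each successive osculating stratum so that $s'$ has both the required vanishing order at $x$ and the prescribed $k$-jet. The ``general point'' hypothesis is essential here, as it guarantees that all osculating dimensions are locally constant in a Zariski open neighborhood of $x$, so that the adapted frame varies smoothly and the Maurer--Cartan equations can be applied uniformly to produce the required $\xi$.
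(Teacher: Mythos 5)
The paper itself gives no proof of this statement: it is recalled as a theorem of Cartan with a pointer to Landsberg--Manivel, and your moving-frame plan is in essence that classical argument, so the comparison is with the cited proof rather than with anything in the text. Your preparatory steps are sound: the finiteness-of-rank argument is fine (nondegeneracy gives injectivity of $V^{*}\to H^{0}(X,\CL)$, and irreducibility ensures a hyperplane section vanishing to infinite order at $x$ vanishes on all of $X$, so the decreasing filtration $V^{*}\cap\Ker(j_{x}^{k})$ terminates at zero), and the graph-coordinate identification of $\BF_{x}^{k}$ together with the identity that $\partial_{w}$ of a degree-$(k+1)$ form equals $(k+1)\iota_{w}$ of its polarization are both correct. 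The one substantive step you leave open --- realizing $\iota_{w}\psi$ as the degree-$k$ jet of an actual element of $V^{*}$ --- is exactly where the general-point hypothesis enters, and you have located it correctly, but the osculating-flag Maurer--Cartan apparatus you plan to deploy (and flag as the main obstacle) is heavier than needed. Since $V^{*}\cap\Ker(j_{y}^{k})$ is the annihilator of the $k$-th osculating space of $X$ at $y$, its dimension is locally constant near the general point $x$; hence along a holomorphic arc $x(t)$ with $x(0)=x$ and $\dot{x}(0)=w$ one can choose a holomorphic family $s(t)\in V^{*}\cap\Ker(j_{x(t)}^{k})$ with $s(0)=s$. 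Then $s'(0)=\frac{d}{dt}\big|_{t=0}s(t)$ lies in $V^{*}$, vanishes to order $k$ at $x$, and its degree-$k$ term is $-(k+1)\,\iota_{w}\psi$: expanding $s(t)|_{X}$ around the moving base point $x(t)$, the derivative of the coefficients only contributes in degrees $\geqslant k+1$, while the derivative through the base point produces precisely your directional-derivative identity. This constant-rank formulation is what the frame formalism encodes; your proposed $s'=s\circ\xi$ is the special case $s(t)=s\circ g(t)^{-1}$ for a flag-adapted family $g(t)\in GL(V)$, so your route does close, but once phrased this way the ``main obstacle'' reduces to a two-line computation rather than a block-by-block structure-equation analysis.
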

Now assume that $X$ admits a $\BC^{*}$-action which is of Euler type at a fixed point $x$. Denote by $\CL$ a very ample line bundle on $X$ and $V=H^{0}(X,\CL)^{\vee}$. 	Choose a $\BC^{*}$-linearization on $\CL$ such that if we write $H^{0}(X,\CL)=\oplus_{k=0}^{r} H^{0}(X,\CL)_{w_{k}}$ as the sum of nonzero weight subspaces with respected to the $\BC^{*}$-action, then we have $0=w_{0}> w_{1}  > w_{2} > ... > w_{r}$. The fundamental forms of $X \subset \BP V$ at $x$ can be identified as follows.
\begin{lem}\label{lem}
	(1)  The subspace $\{ s \in H^{0}(X,\CL) \, | \, D_{+}(s)=C^{+}(x)  \}$ is of dimension one  and  it equals to $H^{0}(X,\CL)_{0}$.	 Take a nonzero section $s_{0}$ with $D_{+}(s_{0})=C^{+}(x)$ and  consider the linear map:
	\begin{align*}
		\eta: H^{0}(X, \CL) &\rightarrow \BC[C^{+}(x)]  \\
		s &\rightarrow \eta(s) 
	\end{align*}
	defined by: $s|_{C^{+}(x)}=\eta(s)s_{0}|_{C^{+}(x)}$, for any $s \in H^{0}(X, \CL) $. Then:\par 
	(2) $\eta$ is an injective $\BC^{*}$-equivariant linear map, where the $\BC^{*}$-action on $\BC[C^{+}(x)]$ is given by $z \cdot f (u)=f(z^{-1} \cdot u)$, for any $z \in \BC^{*}$, $u \in C^{+}(x)$ and $f \in \BC[C^{+}(x)]$.\par 
	(3) Identifying $\BC[C^{+}(x)] \cong \BC[T_{x}X] \cong \oplus_{k \geqslant 0} Sym^{k}T_{x}^{*}X$, then the image of $\eta$ is identified with $\BF_{x}$, under which $\eta(H^{0}(X,\CL)_{w_{k}})=\BF_{x}^{-w_{k}}$. Furthermore $w_{1}=-1$ and $\eta|_{H^{0}(X,\CL)_{w_{1}}}$ is an isomorphism. \par
\end{lem}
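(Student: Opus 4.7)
The plan is to use the Bia\l ynicki-Birula theorem (Theorem 2.1(2)) together with the Euler-type hypothesis at $x$ to identify the cell $C^+(x)$ $\BC^*$-equivariantly with $T_xX$ endowed with the scalar action, so that $C^+(x)$ is a dense open affine subset of $X$. Under this identification, restriction of sections of $\CL$ becomes polynomial functions on $T_xX$ graded by homogeneous degree, and the weight grading on $H^0(X,\CL)$ translates directly into the degree grading on $\BC[T_xX]$. The image of $\eta$ will then be identified with the system of fundamental forms $\BF_x$ term by term.

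I would first verify that the chosen linearization forces $\CL_x$ to have $\BC^*$-weight $0$. Since $\CL|_{C^+(x)}$ is trivializable (being pulled back from the contractible $T_xX$), one can choose a $\BC^*$-equivariant trivialization; any section non-vanishing at the fixed point $x$ must, by equivariance at $x$, have weight equal to the weight of $\CL_x$, and the convention $0 = w_0 > w_1 > \cdots > w_r$ is precisely that this weight is $0$, so in particular $s_0 \in H^0(X,\CL)_0$. Using the scalar action $t \cdot v = tv$ on $T_xX$, a section of weight $w$ restricts on $C^+(x)$ under the $s_0$-trivialization to a function $f$ satisfying $f(t^{-1}v) = t^w f(v)$, i.e.\ a homogeneous polynomial of degree $-w$. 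This immediately shows that $\eta$ is $\BC^*$-equivariant for the stated action, is injective because $C^+(x)$ is dense in $X$, and that $\eta(H^0(X,\CL)_{w_k}) \subseteq \Sym^{-w_k} T_x^*X$. In particular $\eta$ sends $H^0(X,\CL)_0$ into constants, so $\dim H^0(X,\CL)_0 \leqslant 1$ with equality because $s_0$ lies there; any section with $D_+(s) = C^+(x)$ is non-vanishing at $x$ hence has weight $0$, so belongs to $\BC s_0$. This yields (1) and (2).

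For part (3), I would show $\eta(H^0(X,\CL)_{w_k}) = \BF_x^{-w_k}$. A non-zero weight-$w_k$ section $s$ has $\eta(s)$ a non-zero homogeneous polynomial of degree $-w_k$, so $s$ vanishes at $x$ to order exactly $-w_k$; unwinding Definition 2.4(1), its image in $\CL_x \otimes \Sym^{-w_k} T_x^*X$ equals $\eta(s) \otimes s_0(x)$, which after trivializing $\CL_x$ by $s_0(x)$ gives the inclusion $\subseteq$. For the reverse inclusion, I would use that the $\BC^*$-action preserves the filtration by order of vanishing at $x$: decomposing any section vanishing to order $\geqslant k$ into weight pieces $s = \sum_i s_{w_i}$, each $s_{w_i}$ also vanishes to order $\geqslant k$, forcing $w_i \leqslant -k$, and only the piece with $w_i = -k$ contributes a non-zero $k$-jet, namely $\eta(s_{-k}) \otimes s_0(x)$. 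Summing over $k$ gives $\eta(H^0(X,\CL)) = \BF_x$ as graded subspaces.

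Finally, since $\BF_x^1 = T_x^*X$ is non-zero, the weight $-1$ must appear among the $w_i$, and since the weights are integers with top weight $0 = w_0$, this forces $w_1 = -1$; the claim that $\eta|_{H^0(X,\CL)_{w_1}}$ is an isomorphism onto $T_x^*X$ is then the $k=1$ case of the identification above combined with the injectivity of $\eta$. The main subtlety I anticipate is the bookkeeping of signs: one must carefully match the chosen $\BC^*$-equivariant structure on $\CL$, the direction of the scalar action on $T_xX$, and the author's convention that weights on $H^0(X,\CL)$ are non-positive with top weight $0$. Once the linearization is normalized so that $s_0$ has weight zero, all signs align and the rest of the argument is mechanical.
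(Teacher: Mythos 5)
Your computational core for (2) and the graded identification in (3) — restrict sections to the dense cell $C^{+}(x)\cong T_{x}X$ with the scalar action, observe that a weight-$w$ section becomes a homogeneous polynomial of degree $-w$, get equivariance and injectivity from density, and match jets with fundamental forms — is the same mechanism as the paper's. But part (1) has a genuine gap. You never establish that the subspace $\{s \,:\, D_{+}(s)=C^{+}(x)\}$ is nonzero, i.e.\ that $s_{0}$ exists, even though $\eta$ and your chosen trivialization are built from $s_{0}$; and you never prove the inclusion $H^{0}(X,\CL)_{0}\subseteq \{s \,:\, D_{+}(s)=C^{+}(x)\}$, i.e.\ that a nonzero weight-zero section vanishes on \emph{all} of $X\setminus C^{+}(x)$. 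This is exactly where the paper uses the Picard number one hypothesis, which your proposal never touches: since $C^{+}(x)\cong T_{x}X$ is an affine cell, $Cl(X)$ is freely generated by the components of the boundary $D_{x}=X\setminus C^{+}(x)$, so Picard number one forces $D_{x}$ to be irreducible and $\CL\cong \CO_{X}(r_{0}D_{x})$; the canonical section of $\CO_{X}(r_{0}D_{x})$ then produces $s_{0}$, and any section whose zero locus is contained in the irreducible $D_{x}$ must vanish on the whole boundary, giving both existence and the dimension-one statement. Without this input a weight-zero section could vanish on only part of a reducible boundary and the equality with $H^{0}(X,\CL)_{0}$ is unproved. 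Relatedly, your inference ``non-vanishing at $x$ hence has weight $0$'' is false for sections that are not weight vectors (add to $s_{0}$ a negative-weight section vanishing at $x$); the correct argument is that such an $s$ restricts to a nowhere-vanishing regular function on $T_{x}X\cong\BC^{n}$, hence a nonzero constant, and then the degree--weight correspondence forces $s\in H^{0}(X,\CL)_{0}$.

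There is a second gap at the end of (3). The claims $w_{1}=-1$ and that $\eta|_{H^{0}(X,\CL)_{w_{1}}}$ is an isomorphism onto $T_{x}^{*}X$ do not follow from ``$\BF_{x}^{1}=T_{x}^{*}X$ is non-zero'': $\BF_{x}^{1}=T_{x}^{*}X$ is a definitional convention (Definition 2.5), so its nonvanishing says nothing about which weights actually occur in $H^{0}(X,\CL)$; a priori the weights could jump from $0$ to $-2$ and $\mathrm{Im}(\eta)$ would contain no linear forms at all. What rules this out is very ampleness, which is how the paper argues: the restrictions $\mathrm{Im}(\eta)$ generate $\BC[C^{+}(x)]$ as a $\BC$-algebra, and since $\mathrm{Im}(\eta)$ is graded by homogeneous degree, its degree-one part must be all of $T_{x}^{*}X$ (linear forms cannot be generated by constants and higher-degree elements); these linear forms can only come from the weight $-1$ subspace, which yields $w_{1}=-1$ and the surjectivity, injectivity being already known. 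With these two repairs — the class-group/Picard-number-one argument in (1) and the very-ampleness argument at the end of (3) — your outline coincides with the paper's proof.
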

\begin{proof}
	As $C^{+}(x)$ is isomorphic to the affine space $T_{x}X$,  $D_{x} := X \backslash C^{+}(x)$ is a divisor by Hartgo's theorem and  $Cl(X)$ is freely generated by the irreducible components  of $D_{x}$. Then by our assumption that $X$ is of Picard number one,  $D_{x}$ is  the prime generator of $Cl(X)$.
	Thus we can write $\CL \cong \CO_{X}(r_{0}D_{x})$ for some positive integer $r_{0}$. For any nonzero section $s$  with $D_{+}(s)=C^{+}(x)$, write $Z(s)=rD_{x}$ for some positive integer $r$. Then we have $rD_{x} \sim r_{0}D_{x}$, implying that  $r=r_{0}$ and thus the subspace $\{ s \in H^{0}(X,\CL): D_{+}(s)=B^{+}(x)  \}$ is of dimension one. The subspace is $\BC^{*}$-invariant and we denote its weight to be $w'$.  
	Take a nonzero section $s_{0}$ in this subspace and define $\eta$ as above, then $\eta$  is injective as $C^{+}(x)$ is open. For any $s \in H^{0}(X,\CL)$, $z \in \BC^{*}$ and $u \in C^{+}(x)$ we have:
	\begin{equation*}
		(z \cdot s)(u)=z \cdot s (z^{-1} \cdot  u)=z \cdot (\eta(s)(z^{-1} \cdot  u)s_{0}(z^{-1} \cdot u))=\eta(s)(z^{-1}u) \, (z\cdot s_{0})(u)=z^{w'}(z \cdot \eta(s))(u) s_{0}(u),
	\end{equation*}
	implying that $\eta(z \cdot s)=z^{w'}(z \cdot \eta(s))$.  For any $ s \in H^{0}(X,\CL)_{w_{k}}$, viewing $\eta(s)$ as a regular function on $T_{x}X$ via  $C^{+}(x) \cong T_{x}X$, then we have:
	\begin{align*}
		\eta(z \cdot s)(u)=z^{w_{k}}\eta(s)(u),\,\,
		z \cdot  \eta(s)(u)=\eta(s)(z^{-1} \cdot u)=\eta(s)(z^{-1}u)
	\end{align*}
	for any $u \in T_{x}X$. Combined with $\eta(z \cdot s)=z^{w'}(z \cdot \eta(s))$ this shows that $\eta(s)$ is a homogenous polynomial on $T_{x}X$ with $deg(\eta(s))=w'-w_{k} \, \geqslant 0$. This implies that $w'=w_{0}=0$ and  $deg(\eta(s))=-w_{k}$. Thus $\eta$ is $\BC^{*}$-equivariant,  and for any nonzero section $s \in H^{0}(X,\CL)_{0}$, $\eta(s)$ is a constant, which shows that $C^{+}(x)=D_{+}(s)$,  proving (1) and (2). \par  
	For a section $s \in H^{0}(X,\CL)$,write $\eta(s) =\sum_{k \geqslant 0} \eta_{k}(s)$ as the sum of homogenous functions.  The $k$-th jet of $s$  at $x$ equals the class of $\eta(s)|_{x}$ in $ \CO_{x}/\mathfrak{m}_{x}^{k}$, whence under the identification it is represented by $\sum_{i=0}^{k-1} \eta_{i}(s)$.  On the other hand we have proved that  $\eta$ maps $H^{0}(X,\CL)_{w_{k}}$ into the space of homogenous polynomials of degree $-w_{k}$. Thus by the definition of fundamental forms we have $\eta(H^{0}(X,\CL)_{w_{k}})=\BF_{x}^{-w_{k}}$ and Im$(\eta)=\BF_{x}$. Finally  as $\CL$ is very ample, by \cite[Proposition 7.2]{Ha} Im$(\eta)$ generates $\BC[C^{+}(x)]$ as a $\BC-$algebra. Thus  Im$(\eta)$ contains the space of linear functions $T_{x}^{*}X$, which has to be the image of $H^{0}(X,\CL)_{w_{1}}$. It then follows that $w_{1}=-1$ and  $\eta|_{H^{0}(X,\CL)_{w_{1}}}$ is an isomorphism.
\end{proof}

From Lemma 2.7, the fundamental form at $x$ is given by the collection of linear injection $\eta|_{H^{0}(X,\CL)_{w_{k}}}$ for each $k$.
Dually  it is given by the collection of  surjective linear map 
$	\Pi_{k} : \, Sym^{-w_{k}}(T_{x}X) \rightarrow H^{0}(X,\CL)_{w_{k}}^{\vee} $ for each $k$,
where  $\Pi_{0}: \BC \rightarrow H^{0}(X,\CL)_{0}^{\vee}$ maps 1 to the unique  $e_{0} \in H^{0}(X,\CL)_{0}^{\vee}$ such that $e_{0}(s_{0})=1$ and $\Pi_{1}$ is an isomorphism by Lemma 2.7 (3). 
Denote by $f: X \rightarrow \BP H^{0}(X,\CL)^{\vee}$ the projective embedding. Then under the identification $C^{+}(x) \cong T_{x}X$,  we have	$f(u)=[e_{0} + \sum_{k=1}^{r} \Pi_{k}(u,...,u)]$ for each $u \in T_{x}X$ and $f(x)=f(0)=[e_{0}]$. At this point we can deduce an easy corollary:
\begin{cor}
	Let $X,\CL,x$ be as above, if $r=1$ then $X$ is  isomorphic to a projective space. 	
\end{cor}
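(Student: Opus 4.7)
The strategy is to use the explicit formula $f(u)=[e_0+\sum_{k=1}^r \Pi_k(u,\ldots,u)]$ for the projective embedding restricted to $C^+(x)\cong T_xX$, which in the case $r=1$ specializes to the linear parameterization $u\mapsto [e_0+\Pi_1(u)]$. The corollary then reduces essentially to a dimension count together with the observation that the complement of a hyperplane is dense in projective space.

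First I would compute $\dim V$. When $r=1$ the only nonzero graded pieces of $H^0(X,\CL)$ are $H^0(X,\CL)_0$ and $H^0(X,\CL)_{w_1}$. By Lemma 2.7(1) the former is one-dimensional, while Lemma 2.7(3) shows that $\eta|_{H^0(X,\CL)_{w_1}}$ is an isomorphism onto $T_x^*X$. Hence $\dim V = 1+\dim X$ and $\BP V$ is a projective space of dimension $\dim X$. Dualizing, $\Pi_1:T_xX\to H^0(X,\CL)_{w_1}^\vee$ is a linear isomorphism, so the map $u\mapsto [e_0+\Pi_1(u)]$ identifies $C^+(x)$ bijectively with the standard affine chart $\BP V \setminus \BP H^0(X,\CL)_{w_1}^\vee$.

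Finally, since $\CL$ is very ample, $f$ is a closed embedding, so $f(X)$ is a closed subvariety of $\BP V$. But $f(X)\supseteq f(C^+(x))$, which is the complement of a hyperplane in $\BP V$ and hence open and dense. Taking closures yields $f(X)=\BP V$, so $X\cong \BP^{\dim X}$. There is no substantive obstacle here; the entire argument amounts to unwinding the normal forms established in Lemma 2.7 and the discussion immediately following it.
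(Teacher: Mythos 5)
Your argument is correct and is essentially the paper's own proof: both identify $V=\BC e_{0}\oplus H^{0}(X,\CL)^{\vee}_{w_{1}}$ with $H^{0}(X,\CL)^{\vee}_{w_{1}}\cong T_{x}X$ via $\Pi_{1}$, observe that $f(C^{+}(x))=\{[e_{0}+w]\}$ is the complement of the hyperplane $\BP H^{0}(X,\CL)^{\vee}_{w_{1}}$, and conclude by density and closedness that $f$ maps $X$ isomorphically onto $\BP V$. No gaps; your explicit dimension count is just a harmless elaboration of what the paper leaves implicit.
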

\begin{proof}
	Write $V=H^{0}(X,\CL)^{\vee}$ and $W=H^{0}(X,\CL)^{\vee}_{w_{1}}$. Then we have $V=\BC e_{0} \oplus W$ and  $C^{+}(x)=f(T_{x}X)=\{[e_{0} + w]: w \in W \}$ as $\Pi_{1}$ is surjective. Thus the sink of the action equals $\BP W$ and hence $f$ is an isomorphism onto $\BP V$.
\end{proof}
Now assume that $x$ is chosen as a general point.
The contraction homomorphism in Definition \ref{fund_syst}(2) defines   a locally nilpotent action of $T_{x}X$ on  $\oplus_{k \geqslant 0} Sym^{k}T_{x}^{*}X$.  Then by Theorem 2.6, $\BF_{x}$ is  a finite dimensional $T_{x}X$-invariant subspace. Dually it defines a nilpotent action of $T_{x}X$ on $H^{0}(X,\CL)^{\vee}$  through each $\Pi_{k}$. More precisely for each $0 \leqslant  k   \leqslant r $ and for any  $v \in T_{x}X$, denote  $\Gamma_{v}|_{H^{0}(X,\CL)_{w_{k}}^{\vee}}:  H^{0}(X,\CL)_{w_{k}}^{\vee}  \rightarrow H^{0}(X,\CL)_{w_{k+1}}^{\vee} $  by:
\begin{equation}
	\Gamma_{v} \circ  \Pi_{k}(v_{1},...,v_{k})=\Pi_{k+1}(v,v_{1},...,v_{k}) , \,\,\,\,  \text{$\forall k \geqslant 1$} \text{\,\,\,and\,\,\,}
	\Gamma_{v}(e_{0})=\Pi_{1}(v), \Gamma_{v}(H^{0}(X,\CL)_{w_{r}}^{\vee})=0.
\end{equation}
Then $f|_{T_{x}X}$ can be also written as: $f(w)=\sum_{k=0}^{r} \Gamma_{w}^{k}(e_{0})$ for any $w \in T_{x}X$.\par 
In the following we denote by  $V_{k}=H^{0}(X, \CL)_{w_{k}}^{\vee}$ for each $k$.  It can be easily checked that the induced linear map $\Gamma: T_{x}X \rightarrow \mathfrak{g}l(V)$  is  a homomorphism between Lie algebras.  As $x$ is a general point,  $X \subset \BP V$ is Euler-symmetric by Proposition 2.4, where the vector group action on $X$ is given by a linear representation $\rho_{x}: T_{x}X \rightarrow GL(V)$ defined as the following (see \cite[Proof of Theorem 3.7]{euler_sym}):

\begin{equation}
	\rho_{x}(u)(v)=\sum_{l=0}^{r} \sum_{k=0}^{l}  \tbinom{l}{k} \Gamma_{u}^{l-k} (v_{k}),
\end{equation}
for any $u \in T_{x}X$ and $v=\sum_{k=0}^{r}v_{k} \in V$, where $v_{k} \in V_{k}$ for each $k$.

\begin{prop}\label{x_act}
	Let $X,\CL, \rho_{x},f$ be as above, where $x$ is chosen as a general point. Then: \par 
	(1) For each $0 \leqslant  k \leqslant r$, $w_{k}=-k$. \par 
	(2) For each $k \geqslant 1$ and for any $u_{1},...,u_{k} \in T_{x}X$, $\Pi_{k}(u_{1},...,u_{k})=\Gamma_{u_{1}} \circ ... \circ \Gamma_{u_{k}}(e_{0})$.\par  
	(3) The induced action of $T_{x}X$ on $\BP V$ leaves $X$ invariant and lifts the action of $T_{x}X$ on $C^{+}(x)$. Moreover denote by $d\rho_{x}: T_{x}X \rightarrow  \mathfrak{g}l(V)$ the differential map of $\rho_{x}$, then $d\rho_{x}|_{V_{k}}=(k+1) \, \Gamma|_{V_{k}}$ for each $k$. In particular we have
	\begin{equation*}
		d\rho_{x}(u) \cdot V_{k} \subseteq V_{k+1},
	\end{equation*}
	for any $u \in T_{x}X$, any $k  \geqslant 0$ and $d \rho_{x}(u) (e_{0})=\Pi_{1}(u)=\Gamma_{u}(e_{0})$.
\end{prop}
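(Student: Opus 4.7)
The plan is to handle the three parts in sequence: (1) follows from the symbol-system structure of the fundamental forms $\BF_{x}$; (2) is a direct induction from the defining relation of $\Gamma_{v}$; and (3) combines an explicit differentiation of the formula (2.2) with the assertion from \cite[Theorem 3.7]{euler_sym} that $\rho_{x}$ integrates to an honest vector-group action on $X$.

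For (1), I would first apply Theorem 2.6 to conclude that $\BF_{x}$ is a symbol system, so $\iota_{w}\BF_{x}^{k+1} \subseteq \BF_{x}^{k}$ for every $w \in T_{x}X$. Since $\iota_{w}\phi = 0$ for all $w$ forces $\phi = 0$, vanishing propagates upward: $\BF_{x}^{k} = 0$ would imply $\BF_{x}^{k+1} = 0$, and inductively $\BF_{x}^{j} = 0$ for all $j \geqslant k$. Writing $r'$ for $\rank(\BF_{x})$, the non-vanishing of $\BF_{x}^{r'}$ therefore forces $\BF_{x}^{j} \neq 0$ for every $0 \leqslant j \leqslant r'$. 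By Lemma 2.7(3), the $\BC^{*}$-equivariant map $\eta$ identifies $H^{0}(X,\CL)_{w_{k}}$ with $\BF_{x}^{-w_{k}}$, so the set $\{-w_{k}\}_{k=0}^{r}$ is exactly $\{0,1,\ldots,r'\}$; combined with $w_{0}=0$ and the strict monotonicity of the $w_{k}$'s, this yields $r = r'$ and $w_{k} = -k$. Part (2) is then an immediate induction on $k$: the base case $k = 1$ is the definition $\Gamma_{u_{1}}(e_{0}) = \Pi_{1}(u_{1})$, and the defining recursion of $\Gamma_{v}$ gives
\[
\Gamma_{u_{1}} \circ \Gamma_{u_{2}} \circ \cdots \circ \Gamma_{u_{k}}(e_{0}) \;=\; \Gamma_{u_{1}}\bigl(\Pi_{k-1}(u_{2},\ldots,u_{k})\bigr) \;=\; \Pi_{k}(u_{1},\ldots,u_{k}).
\]

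For (3), I would quote \cite[Proof of Theorem 3.7]{euler_sym} for the statement that $\rho_{x}$ defines an action of $T_{x}X$ on $\BP V$ leaving $X$ invariant and lifting the vector-group action on $C^{+}(x)$. To extract the formula for $d\rho_{x}$, I would restrict (2.2) to $v = v_{k} \in V_{k}$ to obtain $\rho_{x}(u)(v_{k}) = \sum_{m=0}^{r-k}\binom{m+k}{k}\Gamma_{u}^{m}(v_{k})$ and read off the coefficient of the linear-in-$u$ term: only $m = 1$ contributes, yielding $d\rho_{x}(u)(v_{k}) = (k+1)\,\Gamma_{u}(v_{k})$. Taking $k = 0$ with $v_{0} = e_{0}$ and using part (2) for $k = 1$ gives $d\rho_{x}(u)(e_{0}) = \Gamma_{u}(e_{0}) = \Pi_{1}(u)$. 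Every step reduces to the setup already established in Lemma 2.7, Theorem 2.6, and the definitions of $\Gamma$ and $\rho_{x}$, so the main difficulty is purely bookkeeping—making sure that the $\BC^{*}$-weights, the equivariance of $\eta$, and the explicit formula (2.2) are all internally consistent; no serious technical obstacle is anticipated.
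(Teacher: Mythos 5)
Your proposal is correct and follows essentially the same route as the paper: (1) via the symbol-system property of $\BF_{x}$ combined with Lemma 2.7(3) to rule out gaps in the weights, (2) by induction from the defining relation (2.2) of $\Gamma$, and (3) by differentiating the explicit formula for $\rho_{x}$ to get $d\rho_{x}|_{V_{k}}=(k+1)\,\Gamma|_{V_{k}}$. The only cosmetic differences are that the paper verifies the first assertion of (3) directly by checking $\rho_{x}(u')\cdot f(u)=f(u+u')$ through the binomial identity rather than just citing the proof of Theorem 3.7 in the Euler-symmetric paper (which the text already invokes when defining $\rho_{x}$, so your citation is legitimate), and that your reference to ``(2.2)'' for the formula defining $\rho_{x}$ should read (2.3).
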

\begin{proof}
	As $x$ is a general point, $\BF_{x}$ is a symbol system. Thus  for any $l$ such that $\BF_{x}^{l+1} \not =0$  we have $\BF_{x}^{l} \not= 0$ as well. Then from Lemma 2.7 (3), we conclude that   $\BF_{x}=\oplus_{k=0}^{r} \BF_{x}^{r}$, $w_{k}=-k$ and  $\eta(H^{0}(X,\CL)_{w_{k}})=\BF_{x}^{k}$ for each $k$. (2)  is a direct consequence of our definition in (2.2). For (3) to show the first assertion, it suffices to check $\rho_{x}(u') \cdot f(u)=f(u+u')$, for any $u,u' \in T_{x}X $.
	For any $u,u' \in T_{x}X$, we have:
	\begin{equation*}
		f(u+u')=	[\sum_{k=0}^{r} \Gamma_{u+u'}^{k}(e_{0})]=[\sum_{l=0}^{r} \sum_{k=0}^{l}  \tbinom{l}{k}   \Gamma_{u}^{k} \circ  \Gamma_{u'}^{l-k}(e_{0})]=[\rho_{x}(u')(\sum_{k=0}^{r} \Gamma_{u}^{k}(e_{0}))]=\rho_{x}(u') \cdot f(u).
	\end{equation*}
	Now for any  $u \in T_{x}X$ and $v=\sum_{k=0}^{r} v_{k} \in V$:
	\begin{align*}
		d\rho_{x}(u)(v)&=\frac{d}{d_{z}}_{|z=0}(\rho_{x}(zu)(v))=\frac{d}{d_{z}}_{|z=0}(\sum_{l=0}^{r} \sum_{k=0}^{l} \tbinom{l}{k}z^{l-k}\Gamma_{u}^{l-k}(v_{k})) =\sum_{l=1}^{r} l  \cdot  \Gamma_{u}(v_{l-1}).
	\end{align*}
	In other words we have $d\rho_{x}|_{V_{k}}=(k+1) \,  \Gamma|_{V_{k}}$, for any $k \geqslant 0$. In particular, $d\rho_{x}.V_{k} \subseteq V_{k+1}$ and $d\rho_{x}(u)(e_{0})=\Gamma_{u}(e_{0})=\Pi_{1}(u)$ for any $u \in T_{x}X$
\end{proof}
Next we apply it to the case when both the  sink and the source are isolated. Assume that for a general pair of points $x,y$ on $X$, there is a $\BC^{*}$-action on $X$ which is of Euler type at the source $x$ and its inverse action is of Euler type at $y$.  Take the linearization of the inverse action on $\CL$ such that the decomposition of the associated weight subspaces equals $H^{0}(X,\CL)=\oplus_{k=0}^{r} H^{0}(X,\CL)'_{w'_{k}}$, where $0=w_{0}' >...>w_{r}'$. Applying Proposition 2.9, we have the following:
\begin{cor}\label{y_act}
	Let $X,\CL , x$ and $y$ be as above, then:\par 
	(1) We have $H^{0}(X,\CL)_{w_{k}}=H^{0}(X,\CL)'_{w'_{r-k}}$ and  $w_{k}=-r-w'_{r-k}$.\par 
	(2) We have $V_{r}=H^{0}(X,\CL)'_{w_{0}'}=\{s \in H^{0}(X,\CL): D_{+}(s)=X^{-}(y) \}$ and $dim(H^{0}(X,\CL)'_{w_{0}'})=1$. Furthermore $f(y)=[e_{r}]$ where $e_{r}$ is a nonzero element in $V_{r}$.\par 
	(3) There is a  vector group action of $T_{y}X$ on $V$ namely $\rho_{y}: T_{y}X \rightarrow   GL(V)$, such that the induced action on $\BP V$ leaves $X$ invariant and lifts the action of $T_{y}X$ on $C^{-}(y)$. Moreover for the differential map $d\rho_{y}$ we have $d\rho_{y} \cdot  V_{0}=0$ and:
	\begin{equation*}
		d\rho_{y}(w) \cdot V_{k+1} \subseteq V_{k},
	\end{equation*}
	for any $w \in T_{y}X$ and for  any $k  \geqslant 0$.
\end{cor}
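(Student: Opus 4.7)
The strategy is to apply Lemma~\ref{lem} and Proposition~\ref{x_act} to the inverse $\BC^{*}$-action at the point $y$ with the primed linearization, and then translate the resulting data back to the original decomposition via a character shift. Since $(x,y)$ is a general pair, $y$ is itself general, and by hypothesis the inverse action is of Euler type with source $y$; hence both earlier results apply on the primed side. In particular, Proposition~\ref{x_act}(1) applied to the inverse action forces $w'_{k} = -k$ for all $k$.

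For (1), I compare the two linearizations of $\CL$ on $V = H^{0}(X,\CL)^{\vee}$. The two $\BC^{*}$-representations of $H^{0}(X,\CL)$ differ by sign inversion followed by a single character twist, the twist being uniquely determined by the requirement that the top weight on the primed side be $0$. This identifies $H^{0}(X,\CL)'_{w'_{j}} = H^{0}(X,\CL)_{w_{r-j}}$ and gives the weight relation $w'_{j} = -w_{r-j} + w_{r}$. Combining with $w'_{j} = -j$ and $w_{0} = 0$ forces $w_{r} = -r$, hence $w_{k} = -k$ and $w'_{r-k} = k - r = -r - w_{k}$, which is claim (1).

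Claim (2) then reduces to Lemma~\ref{lem}(1) applied at $y$ for the inverse action: the space of sections $s$ with $D_{+}(s) = C^{-}(y)$ is one-dimensional and equals $H^{0}(X,\CL)'_{w'_{0}}$, which coincides with $H^{0}(X,\CL)_{w_{r}}$ by (1); dually this identifies $V_{r}$ as one-dimensional. The identity $f(y) = [e_{r}]$ follows because $y$, being the source of the inverse action, must be sent by $f$ to the projectivization of a weight-zero vector on the primed side, i.e., a nonzero element of $V'_{0} = V_{r}$. For (3), Proposition~\ref{x_act}(3) applied to the inverse action at $y$ produces a linear representation $\rho_{y} : T_{y}X \to GL(V)$ leaving $X$ invariant, lifting the vector group action on $C^{-}(y)$, and satisfying $d\rho_{y}(w) \cdot V'_{k} \subseteq V'_{k+1}$; using the identification $V'_{k} = V_{r-k}$ from (1), this rewrites as $d\rho_{y}(w) \cdot V_{j+1} \subseteq V_{j}$, and $d\rho_{y} \cdot V_{0} = 0$ is recovered from the edge case $V'_{r} = V_{0}$ mapping into $V'_{r+1} = 0$. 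The only real subtlety in the argument is keeping the two indexings straight; once (1) is carefully set up, (2) and (3) fall out as direct corollaries of the single-point results already proved.
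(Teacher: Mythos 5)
Your proposal is correct and is essentially the paper's own argument: the paper likewise applies Lemma~\ref{lem} and Proposition~\ref{x_act} to the inverse action at the general point $y$ with the primed linearization, and then re-indexes via $V'_{k}=V_{r-k}$ to obtain (1)--(3), with your character-shift computation and the edge case $V'_{r+1}=0$ just spelling out what the paper leaves implicit. No gaps; the extra care with the two indexings is exactly the right bookkeeping.
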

\begin{proof}
	(1) follows directly from the definition of $w_{k}'$. (2) follows by (1), Lemma 2.7 (1) and Proposition 2.9 (3).  For (3) we dually denote by $V_{k}'=(H^{0}(X,\CL)'_{w_{k}'})^{\vee}$ for each $k$.  Then by (1) we have $V_{k}'=V_{r-k}$. And  from Proposition 2.9 (3)  there is a vector group action of $T_{y}X$ on $V$, such that the induced action on $X$ extends the action of $T_{y}X$ on $C^{-}(y)$. Moreover $d\rho_{y} \cdot V_{k}' \subseteq V_{k+1}'$, equivalently we have $d\rho_{y} \cdot V_{k+1} \subseteq V_{k}$ for any $k \geqslant 0$ and $d\rho_{y} \cdot V_{0}=0$. 
\end{proof}
\section{Prolongations of projective subvarieties}
In this section,  we study prolongation of projective subvarieties in two steps following \cite{nonzero_prog}\cite{prog_02}. Firstly we study the prolongation of the VMRT of Euler-symmetric varieties at a general point. Then we calculate the dimension of $\mathfrak{a}ut(\hat{S})^{(1)}$  for certain $S \subset \BP V$ with nonzero prolongation, which was  explicitly formulated in \cite{nonzero_prog}. \par 

\subsection{Prolongation of the VMRT}
In this section we apply the results built in Section 2 to prove Theorem \ref{non_vmrt_}. Throughout the section, $X$ is assumed to be a Fano manifold of Picard number one. Let us recall the definition of VMRT.
\begin{defn} An irreducible component $\CK$  of the space Ratcurves$^{n}(X)$ of rational curves on $X$ is called a minimal rational component if the subvariety $\CK_{x}$ of $\CK$ parameterizing curves passing through a general point $x \in X$ is non-empty and proper. Curves parameterized by $\CK$ will be called minimal rational curves. Let $\rho: \CU \rightarrow \CK$ be the universal family and $\mu: \CU \rightarrow X$ the evaluation map. The tangent map $\tau: \CU \dashrightarrow \BP T(X)$ is defined by $\tau(u)=T_{\mu(u)}(\mu(\rho^{-1}(\rho(u))))$. The closure $\CC \subset \BP T(X)$  of its image is the total space of variety of minimal rational tangents. The natural projection $\CC \rightarrow X$ is a proper surjective morphism and a general fiber $\CC_{x} \subset \BP T_{x}X$ is called the variety of minimal rational tangents (VMRT for short) at the point $x \in X$.
\end{defn}
The VMRT of an Euler-symmetric variety at a general point is related to its fundamental forms as follows.
\begin{defn}
	Assume that for a general point $x$ on $X$, there exists a $\BC^{*}$-action on $X$ which is of Euler type at $x$. Take a very ample line bundle $\CL$ on $X$ and denote by $f: X \rightarrow \BP H^{0}(X,\CL)^{\vee}$ the projective embedding. Denote by $\BF_{x}$ the fundamental forms of $X$ at $x$ and take $\eta$ as in Lemma 2.7.  For each $k \geqslant 2$ denote:
	\begin{align*}
		Bs(\BF_{x}^{k})&=\{[w] \in \BP T_{x}X: \phi(w,...,w)=0, \forall \phi \in \BF_{x}^{k} \subset Sym^{k}T_{x}^{*}X \}
	\end{align*}
	Then $Bs(\BF_{x}^{k})=\{[w] \in \BP T_{x}X: \eta(s)(w)=0,  \forall s \in H^{0}(X,\CL)_{w_{k}} \}$ and we have the inclusions: $Bs(\BF_{x}^{2}) \subset Bs(\BF_{x}^{3})  \subset ... \subset   Bs(\BF_{x}^{r})$ as $\BF_{x}$ is a symbol system .  Denote the base locus of fundamental forms at $x$ to be $Bs(\BF_{x})=Bs(\BF_{x}^{l_{0}})$, where $l_{0}$ is the smallest integer $l$ such that $Bs(\BF_{x}^{l})$ is non-empty.
\end{defn}
\begin{prop}\label{vmrt_nond}\cite[Prop 4.4 and Prop 5.4(iii)]{euler_sym}\\
	Let $X,x,\CL$ be as above. Let $\CK$ be the family of minimal rational curves on $X$ and $\CC \subset \BP T(X)$ the VMRT-structure on $X$. Then $\CC_{x}=Bs(\BF_{x}) \subset \BP T_{x}X$, which is an irreducible, nonsingular and non-degenerate projective subvariety.
\end{prop}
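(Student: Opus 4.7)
The plan is to identify the minimal rational curves through $x$ explicitly using the Euler-symmetric structure and to match their tangent directions with $Bs(\BF_x)$, following the approach of \cite{euler_sym}.

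First, by Proposition \ref{x_act}, any $w \in T_x X$ produces a rational curve through $x$ of the form
\begin{equation*}
t \mapsto f(tw) = \Bigl[\, e_0 + \sum_{k=1}^{r} t^k\, \Pi_k(w,\ldots,w)\Bigr],
\end{equation*}
whose degree in $\BP V$ equals the largest $k$ with $\Pi_k(w,\ldots,w) \neq 0$. Dually, the condition $[w] \in Bs(\BF_x^k)$ reads $\Pi_k(w,\ldots,w) = 0$. The claimed chain $Bs(\BF_x^2) \subseteq \cdots \subseteq Bs(\BF_x^r)$ follows directly from the symbol-system property: given $\phi \in \BF_x^{k+1}$, the contraction $\iota_w \phi$ lies in $\BF_x^k$, so if every form in $\BF_x^k$ vanishes at $w$ then $\phi(w,\ldots,w) = (\iota_w \phi)(w,\ldots,w) = 0$. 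Therefore $Bs(\BF_x) = Bs(\BF_x^{l_0})$ coincides with the intersection of all the $Bs(\BF_x^k)$, and $[w] \in Bs(\BF_x)$ is equivalent to the curve $t \mapsto f(tw)$ having minimal possible degree $l_0 - 1$ in $\BP V$.

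Next, I would show that this family of curves realises the minimal rational component. Since $X$ has Picard number one and $\CL$ is very ample, the $\CL$-degree governs the notion of minimal rational curve, and the parameterisation above provides a dominant family through $x$ of uniform degree $l_0 - 1$. The core step is to verify that no rational curve through $x$ achieves smaller $\CL$-degree; this uses the Picard rank one hypothesis, the explicit embedding $f$, and an argument via the vector group action $\rho_x$ that propagates the bound around $x$. Granting this, the minimal rational component $\CK$ coincides with the family constructed above, and the tangent map sends it to $Bs(\BF_x)$, proving $\CC_x = Bs(\BF_x)$.

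For the geometric properties, non-degeneracy of $\CC_x \subset \BP T_x X$ follows from the Picard rank one hypothesis via the fact that tangent vectors of minimal rational curves at a general point span the tangent space (otherwise they would sweep out a proper subvariety of $X$). Irreducibility passes from the irreducibility of $\CK$ to $\CC_x$ via the tangent map. The main obstacle is smoothness of $\CC_x$: this requires a careful deformation-theoretic study of the tangent map $\tau : \CU \to \BP T(X)$ combined with the homogeneity supplied by the vector-group action $\rho_x$ and the $\BC^*$-equivariance of the whole picture. I expect the hardest input to be this final smoothness step, for which one must lean on the full analysis of \cite[Prop.~4.4 and Prop.~5.4(iii)]{euler_sym}, which the present paper cites rather than re-proves.
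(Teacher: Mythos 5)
The first thing to say is that the paper does not prove this statement at all: Proposition \ref{vmrt_nond} is imported verbatim from \cite[Prop.\ 4.4 and Prop.\ 5.4(iii)]{euler_sym}, so there is no internal argument to compare against. Your elementary identifications are correct and consistent with how that reference sets things up: $[w]\in Bs(\BF_x^k)$ iff $\Pi_k(w,\ldots,w)=0$, the chain $Bs(\BF_x^2)\subseteq\cdots\subseteq Bs(\BF_x^r)$ from the symbol-system property, and the degree of the orbit closure $t\mapsto f(tw)$ being the largest $k$ with $\Pi_k(w,\ldots,w)\neq 0$. But everything beyond that is either asserted (``Granting this\dots'') or explicitly deferred back to the very propositions being cited. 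The two substantive claims --- that the curves of degree $l_0-1$ through $x$ are precisely the members of the chosen minimal component $\CK$ through $x$ (note that $\CC_x$ is defined relative to a fixed minimal component, so comparing raw $\CL$-degrees of arbitrary rational curves is not by itself the right criterion), and that $Bs(\BF_x)$ is irreducible and nonsingular --- are exactly the content of the Fu--Hwang result, and your proposal does not supply them.

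There is also one point where the argument you do offer is wrong rather than merely deferred: non-degeneracy of $\CC_x$ does not follow from Picard number one ``because otherwise the minimal rational curves would sweep out a proper subvariety of $X$.'' The locus of minimal rational curves through a general point is a proper subvariety in general (of dimension $\dim\CC_x+1$), so no contradiction arises, and linear non-degeneracy of the VMRT is not a formal consequence of $\rho(X)=1$; if $\CC_x$ were degenerate one would only obtain a proper distribution spanned by the VMRTs, and ruling that out requires genuine additional input. In the present setting non-degeneracy is part of what \cite{euler_sym} proves from the structure of the fundamental forms/symbol system of an Euler-symmetric variety (and it is used crucially later, e.g.\ at the end of the proof of Proposition 3.6), so it cannot be waved through by the sweeping argument. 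In short: as a reconstruction your outline points in the right direction, but as a proof it has real gaps at the minimal-component identification, smoothness, and non-degeneracy, the last with an incorrect justification.
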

Next we aim to study the prolongation of infinitesimal linear automorphism of $C_{x} \subseteq \BP T_{x} X$, the most important example of which is the case of  IHSS.
\begin{example}\label{ihss}
	Let $G$ be a simple algebraic group, $B \subset G$ a Borel subgroup, and $T \subset B$ a maximal torus. Denote  by $\Phi $ the root system, $\Delta \subset \Phi$ the simple roots and $\CD$ the Dynkin diagram. Let $\fg=Lie(G)$ be the Lie algebra of $G$ and let $\fh \subset \fg$ be the Cartan subalgebra. For a subset $I \subset \Delta$, denote by $P_{I}$ the standard parabolic subgroup indexed by $I$, and denote by $P_{I}^{-} \subset G$ the opposite group of $P_{I}$. We write the quotient $G/P_{I}$ as $\CD(I)$, which is a smooth  projective rational variety of Picard number $|I|$. In the following we always assume $I=\{\alpha\}$ for a simple root $\alpha \in \Delta$.
	For any root $\beta \in \Phi$, the multiplicity of the simple root  $\alpha$ in $\beta$ is denoted by $m_{\alpha}(\beta)$. Denote by $y=eP$ and $x=\dot{w_{0}}P$, where $w_{0}$ is the longest element in the Weyl group of $G$.\par 
	Write $\fg=\mathop{\oplus}\limits_{k \in \BZ }\fg_{k}$, where $\fg_{k}=\mathop{\oplus}\limits_{m_{\alpha}(\beta)=k}\fg_{\beta}$  for each $k \in \BZ$. Then $X=G/P_{\alpha}$ is called an IHSS if  $\fg$ admits a short grading: $\fg=\fg_{-1} \oplus \fg_{0} \oplus \fg_{1}$, or equivalently $R_{u}(P_{\alpha}^{-})$ and $R_{u}(P_{\alpha})$ are vector groups (see for example \citen{equi_flag}). In this case, $G/P_{\alpha}$ is an equivariant compactification of the vector group $R_{u}(P_{\alpha}^{-})$, with origin $y$. Define a $\BC^{*}$-action on $\CD(I)$ by the cocharcter $\sigma_{\alpha}: \BG_{m} \rightarrow T$, such that $\sigma_{\alpha}(\beta)=\delta_{\alpha,\beta}, \forall \beta \in \Delta $. The $\BC^{*}$-action is equalized at the sink $y$ and  $C^{-}(y)=R_{u}(P^{-}) \cdot y$. This shows that $\CD(I)$ is Euler-symmetric by Proposition 2.4. Denote by $H \subset P_{\alpha}$ the Levi-part of $P$, then $y$ is $H$-fixed. And $\fg_{0}=Lie(H),\fg_{1}=Lie(R_{u}(P_{\alpha}))$, $\fg_{-1}=Lie(R_{u}(P_{\alpha}^{-}))$. Finally the prolongation of $\fa ut (\hat{\CC_{y}})$ can be interpreted as adjoint actions inside the simple grading algebra $\fg$ as follows.\par 
	(1)   $T_{y}X=\fg/\fp \cong \fg_{-1}$ and $\CC_{y} \subset \BP T_{y}X$ is the unique closed orbit of the isotropy action of $H$ on $\BP T_{y}X\cong \BP \fg_{-1}$.\par 
	(2)  $\fa ut(\hat{\CC_{y}}) \cong \fg_{0}$ is given by the adjoint action of $\fg_{0}$ on $\fg_{-1}$. \par
	(3)  $\fa ut(\hat{\CC_{x}})^{(1)} \cong \fg_{1}$ is given by the adjoint action:
	\begin{align}
		\fg_{1} &\rightarrow Hom(\fg_{-1},\fg_{0})\\
		\alpha  &\rightarrow (\beta \rightarrow  [\alpha,\beta]), \nonumber
	\end{align}
	where the image lies in $Sym^{2}T_{x}^{*}X\otimes T_{x}(X)$ as $\fg_{-1}$ is abelian. We list IHSS and their VMRTs in the following table.
	\begin{table}[h]
		\begin{center}
			\caption{IHSS and their VMRTs}
			\begin{tabular}{|c|c|c|c|c|c|c|} \hline 
				\text{IHSS} $X=G/P$ & $\BQ^{n}$ & $Gr(a,a+b)$ & $\BS_{n}$  & $ Lag(n,2n)$ & $E_{6}/P_{1}$ & $E_{7}/P_{7}$ \\
				\hline
				\text{VMRT $\CC_{x}$} & $\BQ^{n-2}$ &   $\BP^{a-1} \times \BP^{b-1}$   & $Gr(2,n)$ & $\BP^{n-1}$ & $\BS_{5}$ & $E_{6}/P_{1}$ \\ \hline 
				$\CC_{x} \subset \BP T_{x}(X)$ & Hyperquadric & Segre  &  Pl$\ddot{u}$cker  & second Veronese & Spinor & Severi\\  \hline
				
			\end{tabular}
		\end{center}
	\end{table}
\end{example}
Now back to the general situation, assume that for a general pair of points $x,y$ on $X$, there is a $\BC^{*}$-action which is of Euler type at $x$ and its inverse action is of Euler type at $y$. Take a very ample line bundle $\CL$ on $X$ and denote  $V=H^{0}(X,\CL)^{\vee}$. Let $f: X \rightarrow \BP V$ be the projective embedding. Recall $\rho_{x}, \rho_{y}$ the linear actions of $T_{x}X$ and $T_{y}X$ on $V$ respectively. We denote by $\fg_{1}=\text{I}m(d\rho_{x}) \subset \fg l(V)$ and $\fg_{-1}=\text{I}m(d\rho_{y}) \subset \fg l(V)$. By definition $\fg_{1},\fg_{-1} \subset \fa ut(\hat{X})$, thus  $\forall \alpha \in \fg_{1}, \forall \beta \in \fg_{-1}$, $\gamma := [\alpha,\beta]$ lies in $ \fa ut(\hat{X})$ as well. Denote by $G_{\gamma} := \{ exp(z\gamma)\, | \,  z \in \BC \} \subset GL(V)$ the one-parameter subgroup. Then the action of $G_{\gamma}$ on $\BP V$ leaves $X$ invariant. Moreover we have:
\begin{lem}
	$x$ and $y$ are fixed by $G_{\gamma}$. Denote by $\Phi_{\gamma}: G_{\gamma} \rightarrow GL(T_{x}X) $ the  induced isotropy action of $G_{\gamma}$ on $T_{x}X$, then for any  $ w \in T_{x}X$ we have:
	\begin{equation}
		\Pi_{1}((d\Phi_{\gamma})(\gamma)(w))=[\gamma,d\phi_{x}(w)]  \cdot e_{0} \in V_{1}.
	\end{equation}
	\begin{proof}
		By Proposition \ref{x_act}(iii) and Corollary \ref{y_act}(3), $\alpha \cdot V_{k} \subset V_{k+1},$ and $ \beta \cdot V_{k+1} \subset V_{k}$ for each $0 \leqslant k \leqslant r-1$. Thus $\gamma \cdot V_{k} \subset V_{k}$ for any $0 \leqslant k \leqslant r$, whence  $G_{\gamma} \cdot V_{k} \subset V_{k}$ for each $k$. This particularly shows that $x$ and $y$ are fixed by $G_{\gamma}$ as $x=[e_{0}],y=[e_{r}]$ and $V_{0},V_{r}$ are both of dimension one. It also implies that the action of $G_{\gamma}$  commutes with the $\BC^{*}$-action.
		Now for any nonzero tangent vector $w \in T_{x}X$, under the identification $C^{+}(x) \cong T_{x}X$, consider the holomorphic arc $\theta_{w}$ on X through $x$:
		\begin{align*}
			\theta_{w}: \BC &\rightarrow X \subset \BP V\\
			z & \rightarrow f(zw)=[\sum_{k=0}^{r} z^{k} \Gamma_{w}^{k}(e_{0})] 
		\end{align*}
		Denote the closure of its image to be $C_{w}$. Then $C_{w}$ is a non-trival $\BC^{*}$-orbit closure  with  source $x$. As the action of $G_{\gamma}$ commutes with the $\BC^{*}$-action,  $g \cdot C_{w}$ is also a non-trival $\BC^{*}$-orbit closure through $g \cdot x=x$ for any $g \in G_{\gamma}$. This implies that $g \cdot f(w) \in C^{+}(x)$ for any $w$, in other words we have $ g \cdot C^{+}(x) \subset C^{+}(x)$. \par 
		Now we calculate the action of $G_{\gamma}$ on $T_{x}X$ via $T_{x}X \cong C^{+}(x)$. Write $\gamma \cdot  e_{0}=c e_{0}$ for some $c \in \BC$, then for any $g_{t}=exp(t \gamma)$ we have:
		\begin{align*}
			f(g_{t} \cdot w)&=[\sum_{k=0}^{r} \Gamma_{g_{t} \cdot w}^{k}(e_{0})]=[e_{0}+\Gamma_{g_{t}.w}^{1}(e_{0})+\sum_{k=2}^{r} \Gamma_{g_{t}.w}^{k}(e_{0})],\\
			g_{t} \cdot f(w)&=g_{t} \cdot [\sum_{k=0}^{r} \Gamma_{w}^{k}(e_{0})]=[e_{0}+e^{-tc}g_{t} \cdot \Gamma_{w}^{1}(e_{0})+\sum_{k=2}^{r}e^{-tc} g_{t} \cdot \Gamma_{w}^{k}(e_{0})].
		\end{align*}
		Thus from $f(g_{t}.w)=g_{t} \cdot f(w)$ we conclude that:
		\begin{equation}
			\Pi_{1}( g_{t} \cdot w)=\Gamma_{g_{t} \cdot w}^{1}(e_{0})=exp(t \, \gamma|_{V_{1}}-tc \, \,  \text{Id}|_{V_{1}} ) \cdot \Pi_{1}(w),
		\end{equation}
		from which we see that the action of $G_{\gamma}$ on $T_{x}X$ via $T_{x}X \cong C^{+}(x)$ is linear. Then as the differential map of  the isomorphism $T_{x}X \cong C^{+}(x)$ at $0 \in T_{x}X$ equals I$d|_{T_{x}X}$, we conclude that $\Phi_{\gamma}(g)(w)=g \cdot w$.  Thus:
		\begin{align*}
			\Pi_{1}(d\Phi_{\gamma)}(\gamma)(w))&=\frac{d}{dt}_{|t=0}(\Pi_{1}(g_{t} \cdot w))=\frac{d}{dt}_{|t=0}(e^{-tc}e^{t\gamma} \cdot \Pi_{1}(w))\\
			&=-c \, \Pi_{1}(w)+\gamma.\Pi_{1}(w)
			=[\gamma,d \phi_{x}(w)].e_{0}
		\end{align*}
		from $\Pi_{1}(w)=\Gamma_{w}(e_{0})=d\phi_{x}(w) \cdot e_{0}$.
	\end{proof}
\end{lem}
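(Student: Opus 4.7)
The approach rests on the observation that $\gamma = [\alpha,\beta]$ preserves the weight decomposition $V = \bigoplus_{k=0}^{r} V_{k}$ coming from the $\BC^{*}$-action. Indeed, $\alpha \in \fg_{1}$ raises the grading by one and $\beta \in \fg_{-1}$ lowers it by one (Proposition 2.9(3) and Corollary 2.10(3)), so both $\alpha\beta$ and $\beta\alpha$ stabilise every $V_{k}$; therefore $\gamma$ and the one-parameter group $G_{\gamma}$ do so as well. Since $V_{0} = \BC e_{0}$ and $V_{r} = \BC e_{r}$ are one-dimensional (Lemma 2.7(1) and Corollary 2.10(2)), $G_{\gamma}$ acts on each of them by a scalar character, and therefore $x = [e_{0}]$ and $y = [e_{r}]$ are both fixed.

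Next I would exploit the fact that preserving the grading forces $G_{\gamma}$ to commute with the $\BC^{*}$-action. Hence $G_{\gamma}$ stabilises the Bia\l ynicki-Birula cell $C^{+}(x)$, which is characterised dynamically, and the induced action on $C^{+}(x) \cong T_{x}X$ coincides with $\Phi_{\gamma}$. Introducing the scalar $c$ via $\gamma \cdot e_{0} = c\, e_{0}$, I would expand $g_{t} \cdot f(w)$ in graded coordinates using the formula $f(w) = [\sum_{k} \Gamma_{w}^{k}(e_{0})]$ of Proposition 2.9, rescale by $e^{-tc}$ so that the $V_{0}$-component remains $e_{0}$, and read off the $V_{1}$-component as $e^{-tc}\, g_{t}\cdot \Pi_{1}(w)$. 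Writing $w(t) := \Phi_{\gamma}(g_{t})(w)$, the $V_{1}$-component of $f(w(t))$ is $\Pi_{1}(w(t))$; equating these and differentiating at $t = 0$ yields
\begin{equation*}
\Pi_{1}\bigl((d\Phi_{\gamma})(\gamma)(w)\bigr) \;=\; -c\,\Pi_{1}(w) \,+\, \gamma\cdot\Pi_{1}(w).
\end{equation*}
Invoking $\Pi_{1}(w) = d\rho_{x}(w)\cdot e_{0}$ (Proposition 2.9(3)) and $\gamma e_{0} = c e_{0}$, the right-hand side equals $[\gamma,\, d\rho_{x}(w)]\cdot e_{0}$, which is the asserted identity.

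The subtle point I expect to require the most care is checking that $\Phi_{\gamma}$ is a well-defined \emph{linear} action on $T_{x}X$, so that differentiating at $t=0$ produces a map of vector spaces rather than merely of analytic varieties. This is precisely why the $V_{1}$-component is the right thing to track: since $\Pi_{1}\colon T_{x}X \to V_{1}$ is a linear isomorphism (Lemma 2.7(3)) and the rescaled formula above depends linearly on $w$ through $\Pi_{1}(w)$, linearity of the $G_{\gamma}$-action on $T_{x}X$ follows automatically, and thereafter everything reduces to routine bookkeeping inside the graded vector space $\bigoplus_{k} V_{k}$.
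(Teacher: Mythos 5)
Your proposal is correct and follows essentially the same route as the paper: $\gamma=[\alpha,\beta]$ preserves each weight space $V_k$, hence $G_\gamma$ fixes $x=[e_0]$ and $y=[e_r]$, commutes with the $\BC^{*}$-action and so stabilises $C^{+}(x)$; then expanding $g_t\cdot f(w)$ in graded components, normalising the $V_0$-component to $e_0$, and tracking the $V_1$-component via the isomorphism $\Pi_1$ gives $\Pi_1(d\Phi_\gamma(\gamma)(w))=-c\,\Pi_1(w)+\gamma\cdot\Pi_1(w)=[\gamma,d\rho_x(w)]\cdot e_0$, exactly as in the paper, including the observation that linearity of the induced action on $C^{+}(x)\cong T_xX$ (with identity differential at the origin) identifies it with the isotropy representation $\Phi_\gamma$.
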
 
As $G_{\gamma}$ fixes $x$, it acts on the family of minimal rational curves through $x$. Thus the image of $\Phi_{\gamma}$ is contained in $Aut^{0}(\hat{\CC_{x}})$ and  $d\Phi_{\gamma}(\gamma) \in \fa ut(\hat{\CC_{x}})$. Then under the identification $\Pi_{1}$, we can rewrite Lemma 3.4 as follows, which is a generalization of the map (3.1).
\begin{cor}
	Consider the linear map $\lambda:	\fg_{-1} \rightarrow  Sym^{2}(V_{1}^{*}) \otimes V_{1} $ given by:
	\begin{align}
		\lambda(d\phi_{y}(\beta)): \,\,\,\,\, \,\,\,\, V_{1} \times V_{1} &\longrightarrow V_{1}\\ \nonumber
		(\Pi_{1}(\alpha),\Pi_{1}(\xi)) &\longrightarrow [[d\phi_{y}(\beta),d\phi_{x}(\alpha)],d\phi_{x}(\xi)].e_{0},
	\end{align}
	for any $\beta \in T_{y}X$ and for any  $\alpha, \xi \in T_{x}X$.
	Then Im$(\lambda) \subset \fa ut(\hat{\CC_{x}})^{(1)}$, under the identiication $\Pi_{1}$.
\end{cor}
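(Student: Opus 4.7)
The plan is to verify that, for each $\beta \in T_y X$, the bilinear form $\lambda(d\phi_y(\beta))$ meets the two defining conditions of an element of $\fa ut(\hat{\CC_x})^{(1)}$: (i) freezing one argument yields an endomorphism of $V_1$ lying in $\fa ut(\hat{\CC_x})$, and (ii) it is symmetric in its two arguments. I will reduce (i) to Lemma~3.4 applied to a suitable bracket element in $\fg l(V)$, and derive (ii) from the Jacobi identity together with the abelianness of $\fg_1 = \mathrm{Im}(d\phi_x)$, which holds because $\rho_x$ is a representation of the commutative additive group $(T_x X, +)$.

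For (i), fix $\beta \in T_y X$ and $\alpha \in T_x X$, and set $\gamma := [d\phi_y(\beta), d\phi_x(\alpha)] \in \fg l(V)$. Up to sign, $\gamma$ is a bracket of an element of $\fg_1$ with one of $\fg_{-1}$, so the discussion preceding Corollary~3.5 applies: the one-parameter subgroup $G_\gamma \subset GL(V)$ preserves $X$ and fixes $x$, its isotropy action $\Phi_\gamma$ on $T_x X$ has image in $\mathrm{Aut}^0(\hat{\CC_x})$, and therefore $d\Phi_\gamma(\gamma) \in \fa ut(\hat{\CC_x})$. Formula (3.2) of Lemma~3.4 identifies $d\Phi_\gamma(\gamma)$, read through $\Pi_1$, with the endomorphism $\xi \mapsto [\gamma, d\phi_x(\xi)] \cdot e_0$ of $V_1$, which is exactly the first-argument-frozen slice $\Pi_1(\xi) \mapsto \lambda(d\phi_y(\beta))(\Pi_1(\alpha), \Pi_1(\xi))$. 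Hence every such slice of $\lambda(d\phi_y(\beta))$ lies in $\fa ut(\hat{\CC_x})$.

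For (ii), apply the Jacobi identity in $\fg l(V)$ to $A = d\phi_y(\beta)$, $B = d\phi_x(\alpha)$, $C = d\phi_x(\xi)$:
$$[[A,B],C] - [[A,C],B] = [A,[B,C]].$$
Since $\rho_x : T_x X \to GL(V)$ is a homomorphism out of the additive (hence commutative) group $T_x X$, its image consists of pairwise-commuting operators, so $\fg_1$ is abelian and $[B,C] = 0$. Evaluating both sides at $e_0$ then yields $\lambda(d\phi_y(\beta))(\Pi_1(\alpha),\Pi_1(\xi)) = \lambda(d\phi_y(\beta))(\Pi_1(\xi),\Pi_1(\alpha))$, so $\lambda(d\phi_y(\beta)) \in \mathrm{Sym}^2(V_1^*) \otimes V_1$. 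Combined with (i), this places $\lambda(d\phi_y(\beta))$ in $\fa ut(\hat{\CC_x})^{(1)}$, as required.

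The substantive geometry has already been carried out in Lemma~3.4, and what remains is essentially bookkeeping; the main potential source of confusion is keeping the identification $V_1 \cong T_x X$ via $\Pi_1$ consistent across the slice formula, formula (3.2), and the Jacobi computation. Conceptually, the crucial input is the abelianness of $\fg_1$, which reflects precisely that $X$ is an equivariant compactification of the vector group $T_x X$ (Proposition~2.4); without this commutativity the bracket defect $[A,[B,C]] \cdot e_0$ would obstruct symmetry and the image of $\lambda$ would fail to land in $\fa ut(\hat{\CC_x})^{(1)}$.
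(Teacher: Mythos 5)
Your proof is correct and follows essentially the same route as the paper: part (i) is exactly the content of Lemma~3.4 together with the discussion immediately preceding the corollary (that $d\Phi_\gamma(\gamma)\in\fa ut(\hat{\CC_x})$ for $\gamma=[d\phi_y(\beta),d\phi_x(\alpha)]$), and part (ii) is the paper's one-line argument that symmetry follows from $\fg_1$ being abelian, which you have simply spelled out via the Jacobi identity.
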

\begin{proof}
	It suffices to check $\lambda(d\phi_{y}(\alpha)$ is symmetric, which follows from the fact that $\fg_{1}$ is abelian.
\end{proof}
Now Theorem \ref{non_vmrt_} is a corollary of the following proposition.
\begin{prop}
	$\lambda$ induces an isomorhpism from $\fg_{-1}$ onto $\fa ut(\hat{\CC_{x}})^{(1)}$.
\end{prop}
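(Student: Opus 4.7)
The plan is to reinterpret $\lambda$ as the $2$-jet map at $x$ on the space of $\BC^{*}$-weight $-1$ global vector fields on $X$, and then read off injectivity and surjectivity from the $\BC^{*}$-weight structure at the two fixed points together with a Cartan-Fubini type extension.

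First I would give $\lambda(d\phi_{y}(\beta))$ a geometric interpretation. The element $d\phi_{y}(\beta) \in \fg_{-1}$ arises from the global algebraic vector field $\xi_{\beta} := d\rho_{y}(\beta)$ on $X$ lifting the translation action of $T_{y}X$ on $C^{-}(y)$. Using $d\phi_{y}(\beta) \cdot e_{0} = 0$ from Corollary \ref{y_act}(3), the abelianness of $\fg_{1}$, and $\Pi_{1}(w) = d\phi_{x}(w) \cdot e_{0}$, a direct expansion of $[[d\phi_{y}(\beta), d\phi_{x}(\alpha)], d\phi_{x}(\xi)] \cdot e_{0}$ shows that, up to a universal nonzero scalar and under the identification $T_{x}X \cong V_{1}$ via $\Pi_{1}$, the quantity $\lambda(d\phi_{y}(\beta))$ is precisely the $2$-jet of $\xi_{\beta}$ at $x$; in particular $\xi_{\beta}$ vanishes to order $2$ at $x$.

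Second, I would extract the formal rigidity of weight $-1$ vector fields from the $\BC^{*}$-weights at $x$ and $y$. Euler type at $x$ forces the isotropy weight on $T_{x}X$ to be $+1$, so $Sym^{k}T_{x}^{*}X \otimes T_{x}X$ carries weight $1-k$; matching weight $-1$ forces $k=2$. Hence the formal Taylor series at $x$ of any weight $-1$ algebraic vector field on $X$ reduces to its $2$-jet, and two such vector fields with the same $2$-jet at $x$ coincide on a formal neighborhood of $x$, hence globally by irreducibility of $X$. Dually, inverse Euler type at $y$ forces the isotropy weight on $T_{y}X$ to be $-1$, so $Sym^{k}T_{y}^{*}X \otimes T_{y}X$ has weight $k-1$; only $k=0$ matches weight $-1$. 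Hence a weight $-1$ vector field on $X$ is determined by its value at $y$; combined with $\xi_{\beta}(y) = \beta$ this identifies $d\rho_{y}$ as a bijection from $T_{y}X$ onto the weight $-1$ subspace $H^{0}(X, TX)_{-1}$. Injectivity of $\lambda$ is then immediate: if $\lambda(d\phi_{y}(\beta)) = 0$, the $2$-jet of $\xi_{\beta}$ at $x$ vanishes, so $\xi_{\beta} \equiv 0$ by the first rigidity, and hence $\beta = 0$ by the second.

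For surjectivity, the main input is the Cartan-Fubini type extension theorem of Hwang--Mok \cite{Cartan_fubini} together with the prolongation techniques of \cite{prog_02,nonzero_prog}: since $X$ is Fano of Picard number one and $\CC_{x} \subset \BP T_{x}X$ is irreducible, nonsingular and nondegenerate by Proposition \ref{vmrt_nond}, every $\psi \in \fa ut(\hat{\CC_{x}})^{(1)}$ extends to a global algebraic vector field $\eta_{\psi}$ on $X$ whose $2$-jet at $x$ equals $\psi$. Decomposing $\eta_{\psi}$ into $\BC^{*}$-weight components and noting that $\psi \in Sym^{2}T_{x}^{*}X \otimes T_{x}X$ is itself of pure weight $-1$, the weight $-1$ component $(\eta_{\psi})_{-1}$ inherits the $2$-jet $\psi$ at $x$. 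By the second paragraph $(\eta_{\psi})_{-1} = \xi_{\beta}$ for $\beta := (\eta_{\psi})_{-1}(y) \in T_{y}X$, so $\lambda(d\phi_{y}(\beta)) = \psi$. The main obstacle is precisely this invocation of the Cartan-Fubini extension --- the passage from the purely symbolic datum $\psi$ to a genuine global vector field on $X$; the weight analyses at the two fixed points and the subsequent identifications are then essentially routine bookkeeping.
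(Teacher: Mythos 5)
Your injectivity argument is sound and in fact takes a genuinely different (and arguably cleaner) route than the paper: the paper proves injectivity by a geometric contradiction (if $\lambda(d\phi_y(\beta))=0$, each $[d\phi_y(\beta),d\phi_x(\alpha)]$ acts on $V$ by scalars, and then the orbit $V_l\cdot x$ would force $C^{+}(x)\setminus C^{-}(y)$ to be a hyperplane, making $\CC_x\subset Bs(\BF_x^r)$ linearly degenerate, contradicting Proposition \ref{vmrt_nond}), whereas you get it from the $\BC^{*}$-weight purity of the translation fields $\xi_\beta=d\rho_y(\beta)$: on the chart $C^{+}(x)\cong T_xX$ (scaling weight $+1$) a weight $-1$ regular vector field is purely quadratic, and $\lambda(d\phi_y(\beta))$ is, up to a nonzero constant, exactly that quadratic coefficient, so its vanishing kills $\xi_\beta$ and hence $\beta=\xi_\beta(y)$. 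The identification of $\lambda$ with the $2$-jet is only sketched (``direct expansion''), but it is correct and recoverable from Lemma 3.4.

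The genuine gap is in your surjectivity step. You assert that every $\psi\in\fa ut(\hat{\CC_x})^{(1)}$ extends to a global algebraic vector field on $X$ with $2$-jet $\psi$ at $x$, citing Cartan--Fubini and ``prolongation techniques''; no such statement is available off the shelf. Cartan--Fubini (Theorem 4.1 here) extends \emph{biholomorphisms} that preserve the VMRT structure on an open set; to use it you would first have to show that the quadratic vector field on $C^{+}(x)$ attached to $\psi$ preserves the VMRT structure in a whole neighborhood of $x$ (this needs the local flatness of the structure coming from the $T_xX$-compactification, which you never invoke), then integrate it to a local flow, extend each time-$t$ map, and check the extensions form an algebraic one-parameter group whose generator has the prescribed $2$-jet. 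None of this is supplied, and it is precisely the hard content you label ``the main obstacle.'' The paper avoids all of it: by \cite[Theorem 1.1.3]{prog_02} there is a natural inclusion $\fa ut(\hat{\CC_x})^{(1)}\hookrightarrow (T_xX)^{\vee}$, so $\dim \fa ut(\hat{\CC_x})^{(1)}\leqslant \dim X=\dim \fg_{-1}$, and injectivity of $\lambda$ alone forces it to be an isomorphism. Your proof becomes complete (and stays shorter than the paper's) if you replace the extension claim by this dimension bound, since your own evaluation-at-$y$ argument already gives $\dim\fg_{-1}=\dim X$.
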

\begin{proof}
	By \cite[Theorem 1.1.3]{prog_02} there is a natural inclusion: $\fa ut (\hat{\CC_{x}})^{1} \hookrightarrow (T_{x}X)^{\vee}$, whence  it suffices to show that $\lambda$ is injective. \par 
	Assume otherwise that $\lambda(d\phi_{y}(\beta))=0$ for some nonzero $\beta \in T_{y}X$. For any $\alpha \in T_{x}X$, denote by $\gamma_{\alpha}=[d\phi_{y}(\beta),d\phi_{x}(\alpha)] $. We now show that $\gamma_{\alpha}=l(\alpha) \, \text{Id}_{V}$ for some $l \in (T_{x}X)^{\vee}$. To do this it suffices to show that $G_{\gamma_{\alpha}}$ acts on $V$ by scalar multiplications. By Lemma 3.4 we have  $d\Phi_{\gamma_{\alpha}}(\gamma_{\alpha})=0$. From (3.3) it implies that $G_{\gamma_{\alpha}}$ acts trivally on $C^{+}(x)$, and consequently  on $X$ as $C^{+}(x)$ is open dense in $X$.  Thus for any nonzero vector $v \in \hat{X}$, $v$ is an eigenvector of the linear action of $G_{\gamma_{\alpha}}$ on $V$. For any $g \in G_{\gamma_{\alpha}}$ denote by $\{V_{g(c)}: c \in J_{g} \}$ the characteristic subspaces of the action of $g$ on $V$, then $X \subset  \cup_{c \in J_{g}} \BP V_{g(c)}$. As  $X$ is non-degenrate and irreducible, there exists some $c \in J_{g}$ such that $V_{g(c)}=V$, i.e., $g$ acts on $V$ by scalar multiplications.  \par 
	Now denote by $U_{\beta}=\{exp(d\phi_{y}(t\beta)): t \in \BC\} \subset GL(V)$ the 1-dimensional vector subgroup of I$m(\phi_{y})$ and $V_{l}=\{exp(d\phi_{x}(\alpha)): l(\alpha)=0\}$ the vector subgroup of Im$(\phi_{x})$.  Then by Corollary \ref{y_act} $x$ is fixed by $U_{\beta}$ as $x=[e_{0}]$ and $d \phi_{y}(\beta) \cdot e_{0}=0$. Thus $V_{l}.x$ is also fixed by $U_{\beta}$ as the action of $U_{\beta}$ and $V_{l}$ on $X$ commutes by our definition of $l$. On the other hand  as  $\beta \not=0$, $U_{\beta}$ acts freely on $C^{-}(y) \cong T_{y}X$. This implies that  $V_{l} \cdot x \subset C^{+}(x) \backslash C^{-}(y)$. We prove that this induces a contradiction:\par 
	If  $l=0$ then $V_{l}=Im(\rho_{x})$ and $V_{l} \cdot x=C^{+}(x)$. But $C^{+}(x) \cap C^{-}(y) $  is non-empty as they are both open dense in $X$.\par 
	If $l\not=0$ then $V_{l} \cdot x$ is a hyperplane in $C^{+}(x) \cong T_{x}X$. By the proof of Lemma \ref{lem}, $D_{y} :=X \backslash C^{-}(y)$ is an irreducible divisor of $X$. So as an open subset of $D_{y}$, $C^{+}(x) \backslash C^{-}(y)$ is an irreducible divisor of $C^{+}(x)$. This implies that $C^{+}(x) \backslash C^{-}(y)$ equals the hyperplane $V_{l} \cdot x$. On the other hand by Corollary \ref{y_act} we can write $C^{+}(x) \backslash C^{-}(y)=\{w \in T_{x}X: \eta(s_{r})(w)=0\}$ where $s_{r}$ is a nonzero section in $H^{0}(X,\CL)_{w_{r}}$. Thus $Bs(\BF_{x}^{r})=\{[w] \in \BP T_{x}X:  \eta(s_{r})(w)=0\}$ is a hyperplane in $\BP T_{x}X$. But then $\CC_{x}=Bs(\BF_{x}) \subset Bs(\BF_{x}^{r})$ is linear degenerate in $\BP T_{x}X$, contradicting  Proposition \ref{vmrt_nond}.
\end{proof}
\subsection{Projective subvarieties with nonzero prolongations}
Let us recall the classification result of projective subvarieties with nonzero prolongations by Fu and Hwang as follows.
\begin{thm}\label{non_zero1} \cite[Main Theorem]{nonzero_prog} and \cite[Theorem 7.13]{special_21} 
	Let $S \subset \BP V$ be an irreducible nonsingular nondegenerate variety such that $\fa ut(\hat{S})^{1} \not =0$. Then $S \subset \BP V$ is projectively equivalent to one of the followings:\par 
	(1) The VMRT of an IHSS.\par 
	(2) The VMRT of a symplectic Grassmannian. \par 
	(3) A nonsingular linear section of  $Gr(2,5) \subset \BP^{9}$ of codimension $\leqslant 2$. \par 
	(4) A nonsingular $\BP^{4}$-general linear section of $\BS_{5} \subset \BP^{15}$ of codimension $\leqslant 3$.\par 
	(5) Biregular projections of (1) and (2) with nonzero prolongations, which are completely described in Section 4 of \cite{nonzero_prog} .
\end{thm}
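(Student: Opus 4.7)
The plan is to recast the problem as a classification of transitive graded Lie algebras and invoke Tanaka--Yamaguchi's theory of prolongations. First I would assemble the prolongation tower
\[ \fg^* \;=\; \fg_{-1} \oplus \fg_0 \oplus \fg_1 \oplus \fg_2 \oplus \cdots, \]
setting $\fg_{-1} := V$, $\fg_0 := \fa ut(\hat{S})$, and $\fg_k := \fa ut(\hat{S})^{(k)}$ for $k \geq 1$. The bracket is the restriction of the commutator in $\fg l(V)$ on the non-negative part, together with the contraction $[v, A] := A_v$ for $v \in \fg_{-1}$ and $A \in \fg_k$. Smoothness and nondegeneracy of $S \subset \BP V$ force $\fg_k = 0$ for $k \gg 0$, so $\fg^*$ is finite-dimensional; the assumption $\fa ut(\hat{S})^{(1)} \neq 0$ guarantees a non-trivial positive part. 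A direct check confirms that $\fg^*$ is a transitive graded Lie algebra in Tanaka's sense.

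Next I would take the Levi decomposition of $\fg^*$ and analyze the induced grading on its semisimple part. The possible $\BZ$-gradings of simple complex Lie algebras compatible with an irreducible faithful representation on $\fg_{-1}$ are classified via marked Dynkin diagrams: height-one (short) gradings correspond to cominuscule parabolics and recover the IHSS of case (1); depth-one, height-two contact-type gradings yield the symplectic Grassmannians of case (2). For reducible or non-semisimple $\fg^*$, I would argue that $S$ must be a biregular projection of one of the preceding homogeneous examples, producing (5), by tracing how the failure of semisimplicity manifests as a partial collapse of a vector-group action on $\hat{S}$ and using that the projection remains birational onto its image.

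The main obstacle is handling the exceptional items (3) and (4): general linear sections of $Gr(2,5) \subset \BP^{9}$ and of the spinor variety $\BS_{5} \subset \BP^{15}$ which are not rational-homogeneous themselves but still inherit a nonzero first prolongation from their ambient variety. These cases cannot be read off from the Lie algebra classification alone. I would detect them by passing to coordinate descriptions (Plücker relations for $Gr(2,5)$ and spinor relations for $\BS_{5}$) to compute $\fa ut(\hat{S})^{(1)}$ of the section directly, and determine precisely when the restriction of the ambient prolongation remains nonzero. The delicate thresholds --- codimension $\leq 2$ for $Gr(2,5)$, and codimension $\leq 3$ together with the $\BP^{4}$-genericity condition for $\BS_{5}$ --- arise from the precise dimension of the ambient prolongation space and the way it restricts. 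Showing that no further exotic examples exist, and matching the biregular projections in (5), requires an exhaustive case analysis over all representation-theoretic inputs, which is the step I expect to be the most technically demanding.
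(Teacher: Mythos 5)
First, note that the paper does not prove this statement at all: Theorem 3.7 is quoted verbatim from Fu--Hwang (\cite[Main Theorem]{nonzero_prog}, together with \cite[Theorem 7.13]{special_21}), so there is no internal proof to compare with; what you are proposing is a new proof of an entire Inventiones-length classification theorem, and as it stands it has genuine gaps rather than being an alternative route.

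The central gap is the passage from the graded Lie algebra $\fg^{*}=\fg_{-1}\oplus\fg_{0}\oplus\fg_{1}\oplus\cdots$ to the variety $S$ itself. Classifying $\BZ$-gradings of simple Lie algebras by marked Dynkin diagrams only produces a list of candidate algebras; it determines $S$ only if you already know that $S$ is (an open piece of) the closed orbit of the group integrating $\fg^{*}$ acting on $\BP V$, i.e.\ essentially homogeneous and linearly normal. That is exactly what fails in the hard cases: the linear sections in (3) and (4) and several projections in (5) are not rational homogeneous, so the Levi-decomposition/Dynkin-diagram step begs the question, and your assertion that ``non-semisimple $\fg^{*}$ forces $S$ to be a biregular projection of a homogeneous example'' is unsupported --- indeed it would misclassify (3) and (4), which are linear sections, not projections, of $Gr(2,5)$ and $\BS_{5}$. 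Moreover, your plan for (3) and (4) (compute the prolongation of the section in Pl\"ucker/spinor coordinates) only verifies that the listed examples do have nonzero prolongation; it gives no argument that no other smooth nondegenerate varieties with $\fa ut(\hat{S})^{(1)}\neq 0$ exist, which is the actual content of the theorem. Fu and Hwang's proof is not a Tanaka-type algebraic classification: it works geometrically with individual prolongation elements $A$, the $\BC^{*}$-actions generated by the semisimple parts of the vector fields $A_{v}$, their fixed loci and singular loci, and an induction through the VMRT of $S$; the codimension bounds and the $\BP^{4}$-generality condition in (3)--(4), and the precise list of admissible centers of projection in (5), come out of that geometric analysis (and, for Theorem 7.13 of \cite{special_21}, a further study of special birational transformations), none of which is supplied or replaced by the sketch you give. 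So the proposal is a reasonable strategic outline but not a proof, and the exhaustiveness step you defer to ``an exhaustive case analysis'' is precisely where the theorem lives.
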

\begin{remark}
	As noted in \cite[Proposition 2.11]{special_21}, all nonsingular sections of $Gr(2, 5) \subset \BP^{9}$ with codimension $s \leqslant 3$ are projectively equivalent.
\end{remark}
The main result of this subsection is the following result based on Theorem 3.7.
\begin{prop}\label{non_zero2}
	Let $S \subset \BP V$ be one of the projective subvarieties in Theorem 3.7 (2)(3)(4)(5), then:
	\begin{equation}
		dim(\fa ut(\hat{S})^{(1)}) < dim(V).
	\end{equation}
\end{prop}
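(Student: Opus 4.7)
The plan is to argue case-by-case through the list in Theorem \ref{non_zero1}(2)--(5). In each case, an explicit description of $S \subset \BP V$ together with an explicit description (or upper bound) for $\mathfrak{a}ut(\hat{S})^{(1)}$ is available either in \cite{nonzero_prog} or \cite{special_21}; the task is to extract $\dim \mathfrak{a}ut(\hat{S})^{(1)}$ and compare with $\dim V$. Note that for case (1), the VMRT of an IHSS, the short grading $\fg = \fg_{-1} \oplus \fg_0 \oplus \fg_1$ from Example \ref{ihss} gives the identification $\mathfrak{a}ut(\hat{S})^{(1)} \cong \fg_1$, and since $\dim \fg_1 = \dim \fg_{-1} = \dim V$ we always have equality. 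For the remaining cases the expectation is that the extra structure (either longer grading, projection, or linear section) forces the prolongation to drop below $\dim V$.

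For case (2), the VMRT of a symplectic Grassmannian $SGr(k,2n)$ with $k < n$ sits inside a parabolic geometry whose associated grading on the symplectic Lie algebra is longer than length three. The prolongation of the VMRT was computed in \cite{nonzero_prog} in terms of this grading, and its dimension corresponds to a proper subspace of the relevant graded component of $\mathfrak{sp}(2n)$. I would transcribe this explicit formula and compare it with the dimension of the ambient projective space of the VMRT, which is the tangent space to $SGr(k,2n)$ at a general point; the inequality is then a direct numerical check.

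For cases (3) and (4), where $V$ has dimension at most $10$ and $16$ respectively, the claim is essentially a finite check: the VMRTs in question are specific smooth linear sections of $Gr(2,5)$ and $\BS_5$, and their prolongation algebras are written out in \cite{nonzero_prog} and \cite{special_21}. Using Remark 3.8 to reduce to the codimension equal to $2$ or $3$ cases respectively, I would read off $\dim \mathfrak{a}ut(\hat{S})^{(1)}$ from the cited tables and check the inequality directly, observing that taking a codimension $s$ linear section decreases the ambient dimension by $s$ while the prolongation typically shrinks by strictly more because the linear constraints also constrain infinitesimal automorphisms.

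For case (5), the biregular projections of the varieties in (1) and (2) with nonzero prolongation are enumerated in Section 4 of \cite{nonzero_prog}. For each such projection $\pi: S \dashrightarrow S' \subset \BP V'$, both $\dim V'$ (which drops by the dimension of the center of projection) and $\dim \mathfrak{a}ut(\widehat{S'})^{(1)}$ are given explicitly there. I would go through the list, and for each entry verify $\dim \mathfrak{a}ut(\widehat{S'})^{(1)} < \dim V'$; the mechanism is that a projection reduces the ambient dimension only by the codimension of the projection center, whereas it typically kills a larger portion of the prolongation algebra because the elements that preserve the center form a proper subalgebra. The main obstacle will be case (5): unlike cases (2)--(4), which reduce to a single computation or a short finite check, case (5) requires checking the inequality across the full list of admissible projections, and some of the ``nearly IHSS'' projections may come close enough to equality that sharper bookkeeping is needed to rule them out.
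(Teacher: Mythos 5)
Your overall strategy (a case-by-case reduction to the explicit descriptions of prolongations in the Fu--Hwang papers) is the same as the paper's, and for case (2) and for the codimension-one sections in cases (3)--(4) the ``transcribe and compare'' step does work essentially as you describe. But there is a genuine gap in case (4): for the $\BP^{4}$-general linear section $S_{2}\subset\BP^{13}$ of $\BS_{5}$ of codimension $2$ there is no table or formula in \cite{nonzero_prog} or \cite{special_21} from which $\dim\fa ut(\hat{S_{2}})^{(1)}$ can be ``read off,'' and Remark 3.8 (which concerns sections of $Gr(2,5)$) does not reduce this case. The paper has to prove the strict inequality indirectly: assuming $\dim\fa ut(\hat{S_{2}})^{(1)}=\dim V=14$, it produces from a prolongation element (via \cite[Prop.\ 2.3.1 and the proof of Thm.\ 1.1.3]{prog_02}) a $\BC^{*}$-action of Euler type at an arbitrary point of the distinguished line $L$ appearing in Kuznetsov's characterization of $S_{2}$ (Theorem 3.14), analyzes the Bia\l ynicki-Birula decomposition (exactly three weight spaces, a positive-dimensional intermediate fixed component $Y_{1}$), and then contradicts the \emph{uniqueness} of the line with normal bundle $\CO(-2)\oplus\CO(1)^{\oplus 6}$, because the splitting type of $T_{S_{2}}|_{L_{y'}}$ is constant along $Y_{1}$. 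Likewise $S_{3}$ is handled not by a table but via quadratic symmetry \cite[Prop.\ 7.6]{special_21} together with the argument of \cite[Thm.\ 6.15]{nonzero_prog}. Your plan, as stated, simply has no mechanism for this subcase, which is where most of the work in the paper's proof lies.

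A secondary issue is case (5): the biregular projections in Section 4 of \cite{nonzero_prog} do not form a finite list of entries with tabulated dimensions; they are families parameterized by the center of projection $L$, and the prolongation of the projected variety is expressed in terms of $\mathrm{Im}(L)$ (or $\mathrm{Im}_{W}(L_{2})$), not in terms of $\dim L$. So your heuristic that ``a projection kills a larger portion of the prolongation than of the ambient space'' must be replaced by uniform estimates: one bounds $\dim L$ by the dimension of the subspace it is forced to lie in (e.g.\ $\Hom(A/\Ker L,\mathrm{Im}\,L)$ for the Segre case, $Sym^{2}(\mathrm{Im}_{W}L_{2})\oplus\Hom(Q^{\vee},\mathrm{Im}_{W}L_{2})$ for the symplectic case, and the analogous spaces of skew or symmetric matrices), and then checks inequalities such as $(a-t)(b-t)+st>0$ or $(m+t)(k-t)>0$. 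This is exactly the bookkeeping carried out in the paper's Lemmas 3.10--3.13, and without it the case (5) part of your argument remains a plausibility claim rather than a proof.
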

We will prove this proposition  case by case based on Theorem 3.7. \par 

\subsubsection{Case (2) and (5)}  In these cases, the prolongation of $\fa ut (\hat{S})$  was explicitly formulated  in \cite{nonzero_prog}. First we consider Case (2) and the case of biregular projections of (2):
\begin{lem}
	Let $W$ and $Q$ be vector spaces of dimensions $k \geqslant 2$ and $m$ respectively. Set $L=Sym^{2}(Q) \subset V=Sym^{2}(W \oplus Q)$ and $U=V/L$. For $\phi \in Sym^{2}(W \oplus Q)$ denote by $\phi^{\#} \in Hom(W^{\vee} \oplus  Q^{\vee},W \oplus Q)$ the corresponding homomorphism via the natrual inclusion $Sym^{2}(W \oplus Q) \subset Hom((W^{\vee} \oplus  Q^{\vee},W \oplus  Q)$. For $L_{2} \subset U$, let $\text{Im}(L_{2})$ be the linear space of $\{\text{Im}(\phi^{\#})  : \overline{\phi}  \in L_{2}\}$. Define $\text{Im}_{W}(L_{2})=P_{Q}(Im(L_{2})) \subset W$, where $P_{Q}: W \oplus Q \rightarrow W$ is the projection to the first factor, then:\par 
	(i) Denote by $p_{L}: \BP V \dashrightarrow \BP(V/L)$ the projection from $\BP L$. Let $v_{2}: \BP (W \oplus Q) \rightarrow \BP(Sym^{2}(W \oplus Q))$ be the second Veronese  embedding, $Z$ the proper image of $Im(v_{2})$. Then $Z \subset \BP V/L=\BP U$ is isomorphic to the VMRT of the sympletic Grassmannian $Gr_{w}(k,\Sigma)$ at a general point and $\fa u t(\hat{Z})^{(1)} \cong Sym^{2}(W^{\vee})$.\par 
	(ii) If $Z \cap \BP L_{2} =\emptyset$, then $\fa ut(\widehat{p_{L_{2}}(Z)})^{(1)} \cong Sym^{2}(W/Im_{W}(L_{2}))^{\vee}$. \par 
	(iii) $dim(\fa u t(\hat{Z})^{(1)} ) < dim(V/L)$. Let $L_{2} \subset U$ be as in (ii), if $\fa ut(\widehat{p_{L_{2}}(Z)})^{(1)}  \not=0$, then:
	\begin{equation*}
		dim(\fa ut(\widehat{p_{L_{2}}(Z)})^{(1)}) < dim(U/L_{2}).
	\end{equation*}
\end{lem}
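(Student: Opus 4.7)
The plan is as follows. Parts (i) and (ii) are essentially worked out in \cite{nonzero_prog}. For (i), one recalls that the symplectic Grassmannian $Gr_{w}(k,\Sigma)$ of isotropic $k$-planes in a symplectic space $\Sigma$ of dimension $2k+2m$ has VMRT at a general point projectively equivalent to the image $p_{L}(v_{2}(\BP(W \oplus Q)))$, and that the identification $\fa ut(\hat{Z})^{(1)} \cong Sym^{2}(W^{\vee})$ is part of their prolongation analysis. Part (ii) then follows from the biregular projection formula of \cite[\S 4]{nonzero_prog}: the prolongation of the projected variety is the subspace of $\fa ut(\hat{Z})^{(1)} \cong Sym^{2}(W^{\vee})$ consisting of the quadratic forms that vanish on $\text{Im}_{W}(L_{2})$, namely $Sym^{2}(W/\text{Im}_{W}(L_{2}))^{\vee}$.

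For the first assertion of (iii), a direct calculation gives
\begin{equation*}
\dim V/L = \binom{k+m+1}{2} - \binom{m+1}{2} = \binom{k+1}{2} + km,
\end{equation*}
while $\dim \fa ut(\hat{Z})^{(1)} = \binom{k+1}{2}$, so the difference is $km$, strictly positive for the symplectic Grassmannian case ($k \geq 2$, $m \geq 1$; the degenerate case $m=0$ is the Lagrangian Grassmannian, an IHSS, and is absorbed into case (1) of Theorem \ref{non_zero1}).

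For the second assertion, set $W' = \text{Im}_{W}(L_{2})$ and $s = \dim W'$. By (ii), the hypothesis $\fa ut(\widehat{p_{L_{2}}(Z)})^{(1)} \neq 0$ forces $k - s \geq 1$. The structural claim I would establish is
\begin{equation*}
L_{2} \subseteq Sym^{2}(W') \oplus (W' \otimes Q) \subset U.
\end{equation*}
Using the canonical splitting $U \cong Sym^{2}(W) \oplus (W \otimes Q)$ (killing the $Sym^{2}(Q)$ part of each representative), write each $\bar{\phi} \in L_{2}$ as $A + B$ with $A \in Sym^{2}(W)$ and $B \in W \otimes Q$; then $\phi^{\#}$ sends $(w^{*}, q^{*}) \in W^{\vee} \oplus Q^{\vee}$ to $(Aw^{*} + Bq^{*},\, B^{t}w^{*})$, so the $W$-component of $\text{Im}(\phi^{\#})$ is $\text{Im}(A) + \text{Im}(B)$, which lies in $W'$ by the very definition of $\text{Im}_{W}(L_{2})$. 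Symmetry of $A$ together with $\text{Im}(A) \subseteq W'$ forces $A \in Sym^{2}(W')$, while $\text{Im}(B) \subseteq W'$ gives $B \in W' \otimes Q$. Hence $\dim L_{2} \leq \binom{s+1}{2} + sm$, and a direct binomial identity yields
\begin{equation*}
\dim(U/L_{2}) - \dim \fa ut(\widehat{p_{L_{2}}(Z)})^{(1)} \geq \binom{k+1}{2} + km - \binom{s+1}{2} - sm - \binom{k-s+1}{2} = (k-s)(s+m),
\end{equation*}
which is strictly positive since $k - s \geq 1$ and $m \geq 1$.

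The main obstacle is extracting the containment $L_{2} \subseteq Sym^{2}(W') \oplus (W' \otimes Q)$ from the compact definition of $\text{Im}_{W}(L_{2})$: once this is in hand, (iii) reduces to the elementary binomial identity $\binom{k+1}{2} - \binom{s+1}{2} - \binom{k-s+1}{2} = s(k-s)$. The subtlety is that although $\text{Im}_{W}(L_{2})$ is defined as a total span over all $\bar{\phi} \in L_{2}$, the symmetry of each $\phi^{\#}$ (the simultaneous constraint on the $Sym^{2}(W)$ block $A$ and on the $W \otimes Q$ block $B$) forces the individual representative $A + B$ to respect the refined sub-decomposition $Sym^{2}(W') \oplus (W' \otimes Q)$, not merely the full span.
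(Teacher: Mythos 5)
Your proposal is correct and follows essentially the same route as the paper: the same splitting $U \cong Sym^{2}(W)\oplus Hom(Q^{\vee},W)$, the same containment $L_{2}\subseteq Sym^{2}(Im_{W}(L_{2}))\oplus Hom(Q^{\vee},Im_{W}(L_{2}))$ giving $\dim L_{2}\leqslant \tfrac{s(s+1)}{2}+sm$, and the same binomial computation yielding the lower bound $(k-s)(s+m)>0$. The extra details you supply (why symmetry forces $A\in Sym^{2}(W')$, and the explicit remark that $m\geqslant 1$ since $m=0$ is the Lagrangian/IHSS case) only make explicit what the paper asserts directly.
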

\begin{proof}
	(i) and (ii) are from \cite[Proposition 4.18]{nonzero_prog}. \par 
	Under the identification: $V=Sym^{2}(W \oplus Q) \subset Hom(W^{\vee} \oplus Q^{\vee}, W \oplus Q)$, we write $U=V/L=Sym^{2}(W) \oplus Hom(Q^{\vee},W)$, where we identify $Sym^{2}(W)$ inside $ Hom(W^{\vee},W)$.	Thus $dim(V/L) >dim(\fa u t(\widehat{Z})^{(1)} ) $ as $Hom(Q^{\vee},W) \not=0$. Take a basis of $Im_{W}(L_{2})$ to be $e_{1},...,e_{t}$ and extend it to a basis of $W$: $e_{1},...,e_{t},e_{t+1},...,e_{k}$. Then by the definition of $Im_{W}(L_{2})$, we have:
	\begin{align*}
		L_{2} \subset &\{(\phi,\eta) \in Sym^{2}(W) \oplus Hom(Q^{\vee},W) \, | \,  Im(\phi^{\#}) \subset Im_{W}(L_{2}) \,\, and \,\, Im(\eta)  \subset Im_{W}(L_{2}) \}\\
		&\cong Sym^{2}(Im_{W}(L_{2})) \oplus Hom(Q^{\vee},Im_{W}(L_{2})),
	\end{align*}
	whence $dim(L_{2}) \leqslant  \frac{t(t+1)}{2}+mt$. Then:
	\begin{align*}
		dim(U/L_{2})-dim(\fa ut(\widehat{p_{L_{2}}(Z)})^{(1)})&	 \geqslant \frac{k(k+1)}{2}+km-\frac{t(t+1)}{2}-tm-\frac{(k-t)(k-t+1)}{2}\\
		&=(m+t)(k-t) > 0,
	\end{align*} 
	where $k-t >0$ as $\fa ut(\widehat{p_{L_{2}}(Z)})^{(1)} \not=0$.
\end{proof}
By \cite[Main Theorem (C)]{nonzero_prog}, the other cases of (5) are biregular projections of the  VMRT of $Gr(a,a+b), \BS_{n}$ and $ Lag(n,2n)$ respectively. We prove these cases  by the following three lemmas.
\begin{lem}
	Let $A$ and $B$ be vector spaces with $a=dim(A) \geqslant b =dim(B) \geqslant 3$. Let $V=Hom(A,B)$. For a subspace $L \subset V$, set $\text{Im}(L)=\{\text{Im}(\phi) \subset B: \phi \in L\}$. $Ker(L)=\bigcap\limits_{\phi \in L} Ker(\phi)$. Then: \par 
	(i) $S=\{[\phi] \in \BP V: rank(\phi) \leqslant 1\} \subset \BP V$ is projectively isomorphic to the VMRT of $Gr(a,a+b)$.\par 
	(iii) Let $L \subset V$ such that $L \cap Sec(S) = \emptyset$, then $	\fa ut (\widehat{p_{L}(S)})^{(1)} \cong Hom(B/Im(L),Ker(L))$ \par
	
	(iii) Let $L \subset V$ be as in (ii), if $	\fa ut (\widehat{p_{L}(S)})^{(1)}  \not =0$  then  $dim(	\fa ut (\widehat{p_{L}(S)})^{(1)} ) < dim(V/L)$.
\end{lem}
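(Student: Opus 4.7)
Part (i) is classical: the rank-$\le 1$ locus in $\BP \Hom(A,B)$ is exactly the image of the Segre embedding $\BP(A^{*})\times \BP(B)\hookrightarrow \BP(A^{*}\otimes B)=\BP V$, which is the VMRT at a general point of $Gr(a,a+b)$ (tangent directions of the lines on the Grassmannian). Part (ii) is the $\Hom(A,B)$-coordinate reformulation of the general prolongation formula for biregular linear projections of Segre varieties worked out in Section~4 of \cite{nonzero_prog}; I would simply quote it, the statement being a direct translation once one identifies $V^{*}=\Hom(B,A)$ and reads off the tangent space of $p_{L}(S)$ at a general point.

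\textbf{Strategy for (iii).} Set $k_{0}=\dim \Ker(L)$, $m=\dim \Im(L)$, and $\ell=\dim L$. By (ii),
\[
\dim \fa ut(\widehat{p_{L}(S)})^{(1)} \;=\; k_{0}(b-m).
\]
The essential observation is a factorization bound for $L$ itself: every $\phi\in L$ satisfies $\Ker(\phi)\supset \Ker(L)$ and $\Im(\phi)\subset \Im(L)$, so each $\phi$ factors as $A\twoheadrightarrow A/\Ker(L)\to \Im(L)\hookrightarrow B$. This produces a natural injection $L\hookrightarrow \Hom(A/\Ker(L),\,\Im(L))$, and hence
\[
\ell \;\le\; (a-k_{0})\,m.
\]

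\textbf{Final count.} Combining this with $\dim(V/L)=ab-\ell$ gives
\[
\dim(V/L) - \dim \fa ut(\widehat{p_{L}(S)})^{(1)} \;\ge\; ab - (a-k_{0})m - k_{0}(b-m) \;=\; (a-k_{0})(b-m) + k_{0}m.
\]
The hypothesis $\fa ut(\widehat{p_{L}(S)})^{(1)}\neq 0$ forces $k_{0}\ge 1$ and $b-m\ge 1$, while the fact that we are in case (5) of Theorem~\ref{non_zero1} (not case (1)) means $L\neq 0$, whence $m\ge 1$ as well. Therefore $k_{0}m\ge 1>0$, and the displayed quantity is strictly positive, which is the desired strict inequality. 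I do not foresee a serious obstacle here: the only delicate point is the exclusion of $L=0$ (which would yield the unprojected Segre, already handled as case (1) of Theorem~\ref{non_zero1}), since that is precisely what upgrades the weak inequality $(a-k_{0})(b-m)\ge 0$ to a strict one via the extra $k_{0}m$ term.
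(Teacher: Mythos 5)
Your proposal follows essentially the same route as the paper: (i) and (ii) are quoted from \cite[Proposition 4.10]{nonzero_prog}, and for (iii) you use the identical factorization bound $L\hookrightarrow \Hom(A/\Ker(L),\Im(L))$, so $\dim L\le (a-k_{0})m$, and the same final count $\dim(V/L)-\dim\fa ut(\widehat{p_{L}(S)})^{(1)}\ge (a-k_{0})(b-m)+k_{0}m>0$ (your factorization is in fact the correct one; the paper's displayed $(a-t)(b-t)+st$ is a typo for $(a-s)(b-t)+st$). Your explicit exclusion of $L=0$ via case (5) of Theorem \ref{non_zero1} plays the same role as the paper's (looser) assertion that $s<a$, so the argument is correct and matches the paper's.
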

\begin{proof}
	(i) and (ii) are from \cite[Proposition 4.10]{nonzero_prog}. For (iii), denote $s=dim(Ker(L))$ and $t=dim(Im(L))$, by the definition  we have
	\begin{equation*}
		L \subset \{\phi \in Hom(A,B): \phi|_{Ker(L)}=0, Im(\phi) \subset Im(L)\}=Hom(A/Ker(L),Im(L)),
	\end{equation*}
	implying $dim(L) \leqslant (a-s)t$. Thus
	\begin{equation*}
		dim(V/L)-dim(\fa ut (\widehat{p_{L}(S)})^{(1)})=ab-dim(L)-(b-t)s \geqslant ab-(a-s)t-(b-t)s=(a-t)(b-t)+st >0,	
	\end{equation*}
	where $s < a$ and $t<b$ as $	\fa ut (\widehat{p_{L}(S)})^{(1)} \not=0$. 
\end{proof}
\begin{lem}
	Let $W$ be a vector space of dimension $n \geqslant 6$. $V=\wedge^{2} W$. For each $\phi \in \wedge^{2} W$, denote by $\phi^{\#} \in Hom(W^{\vee},W)$ via the inclusion $\wedge^{2}W \subset W \otimes W=Hom(W^{\vee},W)$. For a subspace $L \subset V$, define $Im(L) \subset W$ as the linear span of $\{Im(\phi^{\#}) \subset W, \phi \in L\}$. Then:\par 
	(i)   $S=\{[\phi] \in V: rk(\phi) \leqslant 2\} \subset \BP V$ is isomorphic to the  VMRT of $\BS_{n}$.\par 
	(ii) If $L \subset V$ such that $\BP L \cap Sec(S)=\emptyset$, then $\fa ut(\widehat{p_{L}(S)})^{(1)} \cong \wedge^{2}(W/Im(L))^{\vee}$.\par 
	(iii) Let $L \subset V$ be as in (ii), if  $\fa ut(\widehat{p_{L}(S)})^{(1)} \not =0$, then  $dim(\widehat{p_{L}(S)})^{(1)}) < dim(V/L)$.
\end{lem}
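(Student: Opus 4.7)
The plan is to parallel the dimension-counting arguments of Lemmas 3.10 and 3.11: combine the explicit prolongation formula from (ii) with an upper bound on $\dim L$ obtained from the skew-symmetric constraint $\Im(\phi^{\#}) \subseteq \Im(L)$ for every $\phi \in L$. First I would set $t = \dim \Im(L)$ and read off $\dim \fa ut(\widehat{p_{L}(S)})^{(1)} = \binom{n-t}{2}$ directly from (ii).

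The key step is to establish the inclusion $L \subseteq \wedge^{2} \Im(L)$. The reason is that for a skew form $\phi \in \wedge^{2} W$, the image of $\phi^{\#}\colon W^{\vee} \to W$ equals the smallest subspace $U \subseteq W$ with $\phi \in \wedge^{2} U$; one sees this by bringing $\phi$ into the normal form $\sum_{i} u_{i} \wedge v_{i}$ with $\{u_{i}, v_{i}\}$ linearly independent, so that $\Im(\phi^{\#}) = \mathrm{span}\{u_{i}, v_{i}\}_{i}$. Applied to every $\phi \in L$ this yields $L \subseteq \wedge^{2}\Im(L)$, and hence $\dim L \leq \binom{t}{2}$.

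Combining the two ingredients, a routine binomial identity gives
\begin{equation*}
\dim(V/L) \, - \, \dim \fa ut(\widehat{p_{L}(S)})^{(1)} \;\geq\; \binom{n}{2} - \binom{t}{2} - \binom{n-t}{2} \;=\; t(n-t).
\end{equation*}
It then remains to verify $2 \leq t \leq n-2$. The upper bound is forced by the hypothesis $\fa ut(\widehat{p_{L}(S)})^{(1)} \cong \wedge^{2}(W/\Im(L))^{\vee} \neq 0$; the lower bound uses $L \neq 0$ (which is implicit in the setting of Theorem \ref{non_zero1}(5), where we consider a nontrivial biregular projection) together with the elementary fact that any nonzero skew bilinear form has rank $\geq 2$. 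This forces $t(n-t) > 0$, proving the strict inequality.

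I expect the only nontrivial content to be the skew-symmetric inclusion $L \subseteq \wedge^{2} \Im(L)$; once this is in hand, the remaining binomial arithmetic and case-check mirror precisely the last step of Lemmas 3.10 and 3.11, so no additional obstacle should arise.
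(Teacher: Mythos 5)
Your proposal is correct and follows essentially the same route as the paper: the paper also bounds $\dim L \leqslant \binom{t}{2}$ by observing $L \subseteq \{\phi \in \wedge^{2}W : \Im(\phi^{\#}) \subseteq \Im(L)\} \cong \wedge^{2}\Im(L)$ (done there in an adapted matrix basis rather than via your coordinate-free normal-form argument), and then concludes with the identical binomial computation $\binom{n}{2}-\binom{t}{2}-\binom{n-t}{2}=t(n-t)>0$, using $L \neq 0$ and $\fa ut(\widehat{p_{L}(S)})^{(1)} \neq 0$ to force $0 < t < n$. Parts (i) and (ii) are quoted from Fu--Hwang in the paper exactly as you assume them, so there is nothing missing.
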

\begin{proof}
	(i) and (ii) are from \cite[Proposition 4.11]{nonzero_prog}. For (iii)  take a basis of $Im(L)$ to be $e_{1},...,e_{t}$ and extend it to a basis of $W$: $e_{1},...,e_{t},e_{t+1},...,e_{n}$. Denote the dual basis of $W^{\vee}$ to be $f_{1},...,f_{n}$ such that $f_{i}(e_{j})=\delta_{i,j}$ for any $1 \leqslant  i,j \leqslant n$. Identify $Hom(W^{\vee},W)$ with $M_{n\times n}(\BC)$ through:
	\begin{align}
		Hom(W^{\vee},W) &\longrightarrow M_{n \times n}(\BC) \\
		\CA     &\longrightarrow A=(a_{ij}:  1 \leqslant  i,j \leqslant n) \nonumber 
	\end{align}
	such that $\CA(f_{i})=\sum_{j=1}^{n} a_{ij}e_{j}$. Then $V$ corresponds to all skew-symmetric matrices. Now 
	\begin{align*}
		L \subset \{\CA \in V: Im(\CA) \subset Im(L)\}=\{\CA \in V: a_{ij}=0 \,\,\, \text{if \,\,\,} i \geqslant r+1  \text{\,\,\,or\,\,\,} j \geqslant r+1\},
	\end{align*}
	thus $dim(L) \leqslant \frac{t(t-1)}{2}$. Then:
	\begin{align*}
		dim(V/L)-dim(\fa ut(\widehat{p_{L}(S)})^{(1)})&=\frac{n(n-1)}{2}-dim(L)-\frac{(n-t)(n-t-1)}{2}  \\
		& \geqslant \frac{n(n-1)}{2}-\frac{t(t-1)}{2}-\frac{(n-t)(n-t-1)}{2}=t(n-t) > 0,
	\end{align*}
	where $t>0$ as $L \not=0$ and $t < n$ as $\fa ut(\widehat{p_{L}(S)})^{(1)} \not=0$.
\end{proof}

\begin{lem}
	Let $W$ be a vector space of dimension $n \geqslant 3$. $V=Sym^{2} W$. For each $ \phi \in Sym^{2} W$, denot by $\phi^{\#} \in Hom(W^{\vee},W)$ via the inclusion $Sym^{2}W \subset W \otimes W=Hom(W^{\vee},W)$. For a subspace $L \subset V$, define $Im(L) \subset W$ as the linear span of $\{Im(\phi^{\#}) \subset W, \phi \in L\}$. Then:\par 
	(i)   $S=\{[\phi] \in V: rk(\phi) \leqslant 1\} \subset \BP V$ is isomorphic to the  VMRT of $Lag(n,2n)$.\par 
	(ii) If $L \subset V$ such that $\BP L \cap Sec(S) =\emptyset$, then $\fa ut(\widehat{p_{L}(S)})^{(1)} \cong Sym^{2}(W/Im(L))^{\vee}$.\par 
	(iii) Let $L \subset V$ be as in (ii), if  $\fa ut(\widehat{p_{L}(S)})^{(1)} \not =0$, then  $dim(\widehat{p_{L}(S)})^{(1)}) < dim(V/L)$.
\end{lem}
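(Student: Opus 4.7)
My plan is to mirror the proof strategy used for the previous two lemmas (the $Gr(a,a+b)$ and $\BS_n$ cases), which is now essentially combinatorial once the identifications in (i) and (ii) are in hand. Parts (i) and (ii) are known and reference \cite[Proposition 4.12]{nonzero_prog} (the Lagrangian analogue of the earlier propositions for skew and general matrices); these give the concrete description $\fa ut(\widehat{p_L(S)})^{(1)} \cong Sym^2(W/Im(L))^\vee$, so the real content is a dimension count for (iii).

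For (iii), I set $t := dim(Im(L))$. The key observation is that every $\phi \in L$ satisfies $Im(\phi^\#) \subset Im(L)$ by definition of $Im(L)$, and since $\phi$ corresponds to a symmetric element, the inclusion $Sym^2 W \subset Hom(W^\vee, W)$ forces $\phi$ to be symmetric as a map factoring through $Im(L)$. Hence
\begin{equation*}
L \subset \{\phi \in Sym^2 W : Im(\phi^\#) \subset Im(L)\} \cong Sym^2(Im(L)),
\end{equation*}
which yields $dim(L) \leq \tfrac{t(t+1)}{2}$. One could make this completely explicit by choosing a basis $e_1,\dots,e_t$ of $Im(L)$ extended to a basis $e_1,\dots,e_n$ of $W$, representing $\phi$ by a symmetric matrix supported in the upper-left $t \times t$ block, exactly parallel to the matrix argument in Lemma 3.11.

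With this bound in hand, the remaining computation is
\begin{align*}
dim(V/L) - dim(\fa ut(\widehat{p_L(S)})^{(1)})
&= \tfrac{n(n+1)}{2} - dim(L) - \tfrac{(n-t)(n-t+1)}{2} \\
&\geq \tfrac{n(n+1)}{2} - \tfrac{t(t+1)}{2} - \tfrac{(n-t)(n-t+1)}{2} \\
&= t(n-t),
\end{align*}
after expanding. To conclude that this is strictly positive, I argue $0 < t < n$: on one hand $t \geq 1$ because $L \neq 0$ (otherwise $p_L$ is an isomorphism and $\fa ut(\hat{S})^{(1)}$ is the full $Sym^2(W)^\vee$, corresponding to the VMRT of the entire Lagrangian Grassmannian, handled separately); on the other hand $t < n$ because $\fa ut(\widehat{p_L(S)})^{(1)} \cong Sym^2(W/Im(L))^\vee \neq 0$ forces $W/Im(L) \neq 0$.

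I do not anticipate any genuine obstacle here: the argument is structurally identical to Lemmas 3.10 and 3.11, with the symmetric square replacing the exterior square. The only subtlety worth double-checking is that $L$ truly sits inside $Sym^2(Im(L))$ rather than merely in $Hom(Im(L)^\vee, Im(L))$, but this is immediate because $L \subset Sym^2 W$ and the image condition restricts to the symmetric subspace. Everything else is a one-line polynomial identity.
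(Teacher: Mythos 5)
Your proposal is correct and follows essentially the same route as the paper: parts (i)--(ii) cited to the same source, and for (iii) the containment $L \subset Sym^2(Im(L))$ (which the paper phrases via the explicit symmetric-matrix block in a basis adapted to $Im(L)$) giving $\dim(L) \leq \tfrac{t(t+1)}{2}$, followed by the identical computation $\dim(V/L) - \dim(\fa ut(\widehat{p_L(S)})^{(1)}) \geq t(n-t) > 0$ with $0 < t < n$ justified exactly as in the paper.
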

\begin{proof}
	(i) and (ii) are from \cite[Proposition 4.12]{nonzero_prog} . For (iii), as in Lemma 3.11 we take a basis of $Im(L)$ to be $e_{1},...,e_{r}$ and extend it to a basis of $W$ to be $e_{1},...,e_{r},e_{r+1},...,e_{n}$. Denote the dual basis to be $f_{1},...,f_{n}$.  Keep the  identification  (3.5) then  $V$ corresponds to all symmetric matrices and we  have
	\begin{align*}
		L \subset \{\CA \in V: Im(\CA) \subset Im(L)\}=\{\CA \in V: a_{ij}=0 \,\,\, \text{if \,\,\,} i \geqslant r+1  \text{\,\,\,or\,\,\,} j \geqslant r+1\},
	\end{align*}
	implying $dim(L) \leqslant \frac{r(r+1)}{2}$ and thus 
	\begin{align*}
		dim(V/L)-dim(\fa ut(\widehat{p_{L}(S)})^{(1)})&=\frac{n(n+1)}{2}-dim(L)-\frac{(n-r)(n-r+1)}{2}  \\
		& \geqslant \frac{n(n+1)}{2}-\frac{r(r+1)}{2}-\frac{(n-r)(n-r+1)}{2}=r(n-r) > 0,
	\end{align*}
	where $r>0$ as $L \not=0$ and $r < n$ as $\fa ut(\widehat{p_{L}(S)})^{(1)} \not=0$.
\end{proof}
\subsubsection{Case (3)} Let $X=Gr(2,5) \subset \BP^{9}$. For each $k=1,2$, denote by $X_{k} \subset \BP^{9-k}$ the nonsingular linear section of codimension $k$. Then Case $X_{1}$ follows from \cite[Section 3.4]{nonzero_prog}  and Case $X_{2}$ follows from \cite[Lemma 4.6]{Fano_complete}.
\subsubsection{Case (4)} Let $S=\BS_{5} \subset \BP^{15}$. For each $k=1,2,3$, denote by $S_{k} \subset \BP^{15-k}$ the nonsingular $\BP^{4}$-general linear section of codimension $k$ as described in \cite[Proposition 2.12]{special_21}. \par 
(i) Case $S_{1}$ follows from \cite[Section 3.3]{nonzero_prog}.\par 
(ii)
By \cite[Proposition 7.6]{special_21}  $S_{3}$ is quadratically symmetric. The VMRT of $S_{3}$ at a general point  is a nonsingular linear section of $Gr(2,5) \subset \BP^{9}$ of codimension 3, which has zero prolongations  by Theorem \ref{non_zero1}. Then by the proof of \cite[Theorem 6.15]{nonzero_prog} we conclude that $\fa ut(\hat{S_{3}}) \cong \BC$.\par 
(iii)  To prove Case $S_{2}$ we recall the following characterization of $S_{2}$ proved  by Kuznetsov.
\begin{thm}\cite[Proposition 6.1 and  Lemma 6.7]{section_spin10}
	Let $S_{K} \subset S$  be a nonsingular linear section of $S$ of codimension 2, then the followings are equivalent:\par 
	(a) $S_{K}$ is projectively equivalent to $S_{2}$;\par 
	(b)  The Hilbert space $F_{4}(S_{K})$ of linear 4-spaces on $S_{K}$ is non-empty;\par 
	(c) There exists a line $L$ in $S_{K}$ such that
	\begin{equation}
		\CN_{L/S_{K}} \cong \CO_{\CL}(-2) \oplus \CO_{L}(1)^{\oplus 6} 
	\end{equation}
	Moreover, such line is unique and is equal to the intersection of all 4-spaces on $S_{K}$.
\end{thm}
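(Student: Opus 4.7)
The plan is to prove the chain of implications $(a) \Rightarrow (b) \Rightarrow (c) \Rightarrow (a)$, and then deduce the uniqueness of $L$ and its identification as $\bigcap_{\Pi \in F_4(S_K)} \Pi$ from the normal bundle analysis carried out in $(b) \Rightarrow (c)$. The implication $(a) \Rightarrow (b)$ is essentially built into the construction of $S_2$: by \cite[Proposition 2.12]{special_21}, one produces $S_2$ by fixing a linear $\BP^4 \subset \BS_5$ and slicing $\BS_5$ with a sufficiently general pencil of hyperplanes through it, so by design $F_4(S_2)$ is non-empty.

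For $(b) \Rightarrow (c)$, fix $\Pi \in F_4(S_K)$ and consider a line $L \subset \Pi$. I would compute $\CN_{L/S_K}$ from the short exact sequence
\begin{equation*}
0 \to \CN_{L/\Pi} \to \CN_{L/S_K} \to \CN_{\Pi/S_K}\big|_L \to 0,
\end{equation*}
together with the standard fact $\CN_{L/\Pi} \cong \CO_L(1)^{\oplus 3}$. The normal bundle $\CN_{\Pi/\BS_5}$ of a maximal linear $\BP^4$ in $\BS_5$ is classical and described in terms of the cotangent bundle of $\Pi$; restricting to $L$ and accounting for the two additional hyperplane constraints that cut $S_K$ out of $\BS_5$ should produce, for a unique line $L \subset \Pi$, one summand $\CO_L(-2)$ coming from the hyperplanes and three summands $\CO_L(1)$ from the remaining normal directions to $\Pi$ in $\BS_5$. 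Assembling with $\CN_{L/\Pi}$ yields the decomposition in (c).

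For $(c) \Rightarrow (a)$: each summand $\CO_L(1)$ of $\CN_{L/S_K}$ corresponds to a one-parameter family of deformations of $L$ in $S_K$ fixing a point of $L$, i.e.\ to a line through a chosen point of $L$; packaging the six summands together produces a linear $\BP^4 \subset S_K$ through $L$, so $F_4(S_K) \neq \emptyset$ and we reduce to (b). The remaining step $(b) \Rightarrow (a)$ is a projective rigidity argument: the stabilizer in $\mathrm{Aut}(\BS_5)$ of a fixed $\Pi \in F_4(\BS_5)$ acts transitively on the space of $\BP^4$-general pencils of hyperplanes through $\Pi$, so any two codimension-$2$ sections of $\BS_5$ containing a linear $\BP^4$ are projectively equivalent, hence to $S_2$.

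The principal obstacle is the normal bundle computation underlying $(b) \Rightarrow (c)$: one has to single out which line $L \subset \Pi$ acquires the $\CO_L(-2)$ twist, show that this choice does not depend on $\Pi \in F_4(S_K)$, and verify that every $\Pi' \in F_4(S_K)$ contains that common $L$. The ``moreover'' clause should then fall out of this computation: the $\CO_L(-2)$ direction precisely obstructs deforming $L$ out of any $\BP^4$ through it, which forces uniqueness and identifies $L = \bigcap_{\Pi' \in F_4(S_K)} \Pi'$.
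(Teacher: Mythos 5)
You should first be aware that the paper contains no proof of this statement at all: it is imported verbatim from Kuznetsov \cite[Proposition 6.1 and Lemma 6.7]{section_spin10}, and everything in Section 3.2.3 of the paper takes it as a black box. So your proposal has to be judged as a self-contained proof of Kuznetsov's theorem, and as written it is an outline in which every load-bearing step is deferred rather than proved. The global structure (a)$\Rightarrow$(b)$\Rightarrow$(c)$\Rightarrow$(b)$\Rightarrow$(a) is reasonable, and your degree bookkeeping is consistent (lines on $\BS_5$ have normal bundle $\CO_L(1)^{\oplus 6}\oplus\CO_L^{\oplus 3}$, so $\deg \CN_{L/S_K}=4=-2+6$), but that is essentially all that is actually established.

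The concrete gaps are these. (i) For (b)$\Rightarrow$(c) you never produce the special line: you must show that inside a given $\Pi\in F_4(S_K)$ there exists a line $L$ on which the two cutting hyperplanes degenerate enough to create an $\CO_L(-2)$ summand, that this $L$ is unique, independent of the choice of $\Pi$, and contained in every $4$-space on $S_K$ --- you flag this yourself as ``the principal obstacle,'' but together with (b)$\Rightarrow$(a) it \emph{is} the theorem, and nothing in your sketch (e.g.\ knowledge of $\CN_{\Pi/\BS_5}$) yields it without a genuine computation in the half-spin representation. (ii) In (c)$\Rightarrow$(b), the claim that the six $\CO_L(1)$ summands ``package together'' into a linear $\BP^4\subset S_K$ through $L$ is unsupported: positivity of part of $\CN_{L/S_K}$ gives deformations of $L$ (and here $H^1(\CO_L(-2))\neq 0$, so even the deformation theory is obstructed), but there is no mechanism in your argument forcing the swept-out locus to be a linear space of dimension four. (iii) Your (b)$\Rightarrow$(a) rests on the assertion that the stabilizer of $\Pi$ in $\Aut(\BS_5)$ acts transitively on $\BP^4$-general pencils of hyperplanes through $\Pi$; this transitivity is exactly the projective-rigidity statement that needs proof (it is the $\BS_5$-analogue of \cite[Proposition 2.11--2.12]{special_21} and of the orbit analysis Kuznetsov carries out), not a formal consequence of anything you cite, and it also requires checking that smoothness of the section forces the pencil into the open orbit. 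Until (i)--(iii) are supplied, the proposal is a plausible plan, not a proof.
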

Now assume otherwise that $dim(\fa ut(\hat{S_{2}})^{(1)})=dim(V)$. Take $L$ as the line defined in Theorem 3.14. For any point $x=[\hat{x}] \in L$, take a linear function $l \in V^{\vee}$ such that $l(\hat{x}) \not=0$. Then by \cite[Proposition 2.3.1]{prog_02} there exists a unique $\CA \in \fa ut(\hat{S_{2}})^{(1)}$ such that $\CA_{\alpha, \alpha}=l(\alpha) \alpha$ for any  $\alpha \in \hat{S_{2}}$. Moreover denote by $P_{\alpha}=T_{\alpha}(\hat{S_{2}})$ the tangent space of $\hat{S_{2}}$ at $\alpha$, then we have:
\begin{equation}
	2 \CA_{\alpha , \beta}=l(\alpha) \beta + l(\beta) \alpha,
\end{equation}
for any $\alpha \in \hat{S_{2}}$ and any $\beta \in P_{\alpha}$. 
Denote by $s$ the semisimple part of $\CA_{\hat{x}}$. By the proof of \cite[Theorem 1.1.3]{prog_02}, the one parameter subgroup $\{exp(2ts): t \in \BC\}$ induces a $\BC^{*}$-action on $S_{2}$ which is of Euler type at $x$.  We shall deduce the contradiction from the $\BC^{*}$-action.  \par 
First we claim that there are exactly three different weight subspaces of the $\BC^{*}$-action on $V$. In fact by $\CA_{\hat{x}}(V) \subset P_{\hat{x}}$ and by (3.8), the linear action of $\BC^{*}$ on $V$ has at most three different weight subspaces. On the other hand denote by $\CL=\CO_{\BP V}(1)|_{S_{2}}$. As $S_{2} $ is linear normal in $\BP V$,  $(S_{2}, \CL,x)$ satisfies the conditions in Lemma 2.7. Thus  by Corollary 2.8  it has exactly three weighted subspaces. 
Under the setting of Section 2 we have $r=2$. Denote by $W=H^{0}(S_{2},\CL)^{\vee}_{-1}$, $U=H^{0}(S_{2},\CL)^{\vee}_{w_{2}}$ and $f: S_{2} \rightarrow \BP V$ the projective embedding. Then we have: $dim(V)=14$, $  dim(W)=dim(T_{x}S_{2})=8$ and $dim(U)=dim(V)-1-dim(W)=5$. We now  check that the $\BC^{*}$-action on $S_{2}$ satisfies the following two properties. \par 
(a) There are exactly three irreducible components of $X^{\BC^{*}}$:  the isolated source $\{x\}$, the unique component $Y_{1}$ contained in $\BP W$ and the unique component $Y_{2}$  contained in $\BP U$.   First note that for any $Y \in \CY$, $Y \subset \BP V_{k}$ for some k.  If $Y \subset \BP W$, then  a $\BC^{*}$-orbit whose sink lies in $Y$ has its source equal to $x$. Thus by Proposition 2.3 such $Y$ is unique, and $C^{-}(Y)$ is a line bundle over $Y$. If $Y \subset \BP U$ then we see that $v^{+}(Y)=0$ whence $Y$ is the unique sink of the $\BC^{*}$-action. \par 
(b) We have  $dim(Y_{1}) > 0$. Otherwise assume that  $Y_{1}=\{y\}$ is a single point. If  $dim(Y_{2}) = 0$ then by Proposition 2.3 and (a),  we have $v^{+}(Y_{1})=v^{-}(Y_{1})=1$ and thus  $dim(S_{2})=dim(T_{y}S_{2})= v^{+}(Y_{1})+v^{-}(Y_{1})=2 < 8$, which is a contradiction. If $dim(Y_{2}) > 0$ then we have $D_{x}=S_{2} \backslash C^{+}(x)=C^{+}(y) \cup Y_{2}$. This implies that $Y_{2}$, as a divisor of $D_{x}$, is of dimension 6, contradicting the fact that $Y_{2} \subset \BP U \cong \BP^{4}$.\par 
Now as $L$ is the intersection of all 4-spaces in $X_{2}$, it is  $\BC^{*}$-invariant. Moreover as $x \in L$ and the action is of Euler type at $x$, we conclude that $L$ is a non-trival $\BC^{*}$-orbit closure with source $x=[e_{0}]$.  Denote the orbit to be $\BC^{*} \cdot f(w)=\{[e_{0} + z \Pi_{1}(w) + z^{-w_{2}}\Pi_{2}(w,w)]: z \in \BC^{*}\}$ for some nonzero $w \in T_{x}S_{2}$ and denote by $y$  the sink of the orbit. Then we must have $\Pi_{2}(w,w)=0$ and $y \in Y_{1}$. Otherwise the sink of the orbit would be $[\Pi_{2}(w,w)] \in \BP U$. Then the line $L$ will be contained in $  \BP (\BC e_{0}  \oplus U)$, contradicting the fact that $\Pi_{1}(w) \not =0$ as $\Pi_{1}$ is injective by Lemma 2.7 (3). Now take any point $y' \in Y_{1}$, dentoe by $L_{y'}$ the unique non-trival $\BC^{*}$-orbit closure with sink $y'$ and source $x$. Then $L_{y'}$ is exactly the line connecting $x$ and $y'$. By \cite[Lemma 2.16]{small_bandwidth} and by the proof of \cite[Proposition 2.17]{small_bandwidth}, the splitting type of $T_{S_{2}}|_{L_{y'}}$ is determined by the weights of the isotropy action of $\BC^{*}$ on $T_{y'}S_{2}$. As $Y_{1}$ is irreducible, the weights of $\BC^{*}$ on $T_{y'}S_{2}$ remain  invariant as $y'$ varies in $Y_{1}$. This implies that the splitting type of $T_{S_{2}}|_{L_{y'}}$ also remains invariant, contradicting the uniqueness of $L$ as $dim(Y_{1}) > 0$. Thus we conclude that $dim(\fa ut(\hat{S_{2}})^{(1)}) < dim(V)$. This completes the proof of Proposition 3.9.

\section{Proof of main result}
In this section we will prove Theorem \ref{main_thm}. Let's first recall the Cartan-Fubini type extension theorem proved by Hwang and Mok \cite{Cartan_fubini}. We will use the following version taken from  \cite[Theorem 6.8]{nonzero_prog}
\begin{thm}
	Let $X_{1}$ and $X_{2}$ be two Fano manifolds of Picard number 1, different from projective spaces. Let $\CK_{1}$ and $\CK_{2}$ be families of minimal rational curves on $X_{1}$ and $X_{2}$ respectively. Assume that for a general point $x \in X_{1}$, the VMRT $\CC_{x} \subset \BP T_{x}(X_{1})$ is irreducible and nonsingular. Let $U_{1} \subset X_{1}$ and $U_{2}  \subset X_{2}$ be connected analytical open subsets. Suppose that there exists a biholomorphic map $\phi: U_{1} \rightarrow  U_{2}$ such that for a general point $x \in U_{1}$, the differential $d\phi_{x} : \BP T_{x}(U_{1}) \rightarrow  \BP T_{\phi(x)}(U_{2})$ sends $\CC_{x}$ isomorphically to $\CC_{\phi(x)}$. Then there exists a biregular morphism $\Phi: X_{1} \rightarrow  X_{2}$ such that $\phi = \Phi|U_{1}$.
\end{thm}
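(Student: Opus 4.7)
The plan is to follow the classical Hwang--Mok strategy: propagate the local biholomorphism $\phi$ along minimal rational curves, using the VMRT-preserving property of $d\phi$ to extend $\phi$ one curve at a time, and glue these local extensions into a global biregular morphism.

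First, after shrinking $U_{1}$ if necessary, I would arrange that $d\phi_{x}(\CC_{x}) = \CC_{\phi(x)}$ holds for every $x \in U_{1}$. For each such $x$ and each minimal rational curve $C$ through $x$ with tangent class $[T_{x}C] \in \CC_{x}$, the image $d\phi_{x}([T_{x}C]) \in \CC_{\phi(x)}$ corresponds to a unique minimal rational curve $C'$ through $\phi(x)$. Since $\phi$ is a local biholomorphism sending tangent directions in $\CC_{x}$ to tangent directions in $\CC_{\phi(x)}$, the restriction $\phi|_{C \cap U_{1}}$ must map into $C'$, and using the $\BP^{1}$-structure on both curves it extends canonically to an isomorphism $\phi_{C}: C \to C'$. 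Performing this construction for all minimal rational curves meeting $U_{1}$ enlarges the domain of $\phi$ to a strictly larger open subset $U_{1}' \supset U_{1}$.

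Next, I would iterate along chains of minimal rational curves. Since $X_{1}$ is Fano of Picard number one, every point of $X_{1}$ is connected to $U_{1}$ by a finite chain of such curves, so the procedure produces at least a meromorphic extension $\Phi: X_{1} \dashrightarrow X_{2}$. The main obstacle --- and the technical heart of \cite{Cartan_fubini} --- is proving that the extension is single-valued along different chains. This is a monodromy/flatness statement: the VMRT-preserving condition amounts to $\phi$ being an isomorphism of the $G$-structures defined by the irreducible, nonsingular VMRT, and one must show that the associated Cartan connection is integrable so that different chains give the same value. This relies on a careful analysis of the tangent map along minimal rational curves together with the local rigidity of VMRT-preserving maps, and is where most of the work in \cite{Cartan_fubini} lies.

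Finally, I would upgrade $\Phi$ from meromorphic to biregular. Since $d\Phi$ is an isomorphism wherever $\Phi$ is defined, the indeterminacy locus has codimension $\geqslant 2$; using Picard number one of both $X_{i}$ together with properness of the family of minimal rational curves (so that a contracted curve would yield a non-trivial class in $\Pic(X_{2})$ disjoint from the hyperplane class), one shows this locus is in fact empty, so $\Phi$ is a morphism. Applying the same procedure to $\phi^{-1}: U_{2} \to U_{1}$ produces an inverse morphism $\Psi: X_{2} \to X_{1}$ with $\Psi \circ \Phi = \id_{X_{1}}$ on a non-empty open set and hence globally, so $\Phi$ is biregular with $\Phi|_{U_{1}} = \phi$.
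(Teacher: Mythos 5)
The paper does not prove this statement at all: Theorem 4.1 is quoted verbatim from Hwang--Mok \cite{Cartan_fubini}, in the formulation of \cite[Theorem 6.8]{nonzero_prog}, and is used as a black box. So there is no internal proof to compare with, and your text has to stand on its own as a proof of the quoted theorem. As written, it does not: at the two decisive points you either defer to \cite{Cartan_fubini} ("this is where most of the work lies") or treat as automatic a step that is in fact the technical core of that work, so the proposal is an outline plus a citation of the result being proved rather than a proof.

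Concretely, the step you present as immediate --- that $\phi|_{C \cap U_{1}}$ "must map into $C'$" because $d\phi$ preserves the VMRT --- is precisely the deep point. The hypothesis only gives that the image curve $\phi(C \cap U_{1})$ is tangent at each of its points to the cone $\CC_{\phi(x)} \subset \BP T_{\phi(x)}(X_{2})$; when the VMRT is positive-dimensional there are many germs of holomorphic curves everywhere tangent to the cone structure that are not pieces of minimal rational curves, so no conclusion can be drawn without the Hwang--Mok analysis (nondegeneracy of the VMRT, the study of the distribution spanned by the cones and its second fundamental form) showing that VMRT-preserving maps send open pieces of minimal rational curves to minimal rational curves; this is also where the hypothesis that $X_{i}$ are not projective spaces enters. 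Conversely, the place you identify as the heart --- a monodromy or integrability-of-Cartan-connection argument along chains of curves --- is not how the actual proof proceeds: no Cartan connection is used in \cite{Cartan_fubini}; instead one constructs a germ of map between the parameter spaces $\CK_{1}$ and $\CK_{2}$, extends $\phi$ to a rational map via the universal families, and then upgrades it to a biregular morphism using Picard number one and the fact that neither the map nor its inverse contracts curves (only this last upgrading step matches your final paragraph). To make the proposal a genuine proof you would have to supply the curve-preservation theorem and the univalent extension via the family of minimal rational curves, i.e.\ essentially reproduce the content of \cite{Cartan_fubini}.
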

The extension theorem enables us to characterize an Euler-symmetric variety by its VMRT. We present a $\BC^{*}$-equivariant version for our convenience.
\begin{cor}\label{c_f}
	Let $X,X'$ be two Fano manifolds of Picard number 1, and $\CK,\CK'$ are families of minimal rational curves on $X,X'$ respectively. $\CC \subset \BP T(X)$ and $\CC' \subset \BP T(X')$ the associated VMRT sturctures on $X$ and $X'$. Assume that for  a general point  $x$ on $X$ and for a general point $x'$ on $X'$ there are $\BC^{*}$-actions on $X$ and $X'$  such that the actions are of Euler type at $x$ and $x'$ respectively. If  $\CC_{x}$ is projectively isomorphic to $\CC'_{x}$, then there is a $\BC^{*}$-equivariant isomorphism $\Phi: X \rightarrow X'$ maps $x$ to $x'$.
\end{cor}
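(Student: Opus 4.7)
The plan is to construct a $\BC^{*}$-equivariant biholomorphism between the open affine cells at $x$ and $x'$, verify that its differential preserves the VMRT structure, and then invoke Theorem 4.1. By the Białynicki-Birula theorem (Theorem 2.1), the Euler type hypothesis forces $v^{+}(x)=\dim X$, so there is a $\BC^{*}$-equivariant algebraic isomorphism $\psi: C^{+}(x) \xrightarrow{\cong} T_{x}X$ with $d\psi_{x}=\id$, where $\BC^{*}$ acts on $T_{x}X$ by scalar multiplication; similarly $\psi': C^{+}(x') \xrightarrow{\cong} T_{x'}X'$. Picking any linear lift $L: T_{x}X \to T_{x'}X'$ of the given projective isomorphism $\CC_{x} \xrightarrow{\cong} \CC'_{x'}$, set
\begin{equation*}
\phi \;=\; (\psi')^{-1} \circ L \circ \psi : C^{+}(x) \longrightarrow C^{+}(x').
\end{equation*}
Then $\phi$ is a $\BC^{*}$-equivariant biholomorphism with $\phi(x)=x'$.

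The heart of the argument is to verify that for every general $u \in C^{+}(x)$, the differential $d\phi_{u}: T_{u}X \to T_{\phi(u)}X'$ carries $\CC_{u}$ isomorphically onto $\CC'_{\phi(u)}$. Differentiating the equivariance relation $\psi \circ \sigma_{t}=t\cdot \psi$ gives $d\psi_{\sigma_{t}u} \circ d\sigma_{t}|_{u}=t \cdot d\psi_{u}$, and since $\sigma_{t}$ is an automorphism of $X$ its differential permutes the VMRT family, that is, $d\sigma_{t}|_{u}(\CC_{u})=\CC_{\sigma_{t}u}$. Combining the two and projectivizing,
\begin{equation*}
d\psi_{\sigma_{t}u}(\CC_{\sigma_{t}u}) \;=\; d\psi_{u}(\CC_{u}) \;\subset\; \BP T_{x}X
\end{equation*}
for every $t \in \BC^{*}$. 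As $t \to 0$ the point $\sigma_{t}u$ converges to $x$ along its orbit closure, and by continuity of the VMRT family together with $d\psi_{x}=\id$, the left hand side converges to $\CC_{x}$. Hence $d\psi_{u}(\CC_{u})=\CC_{x}$ for every general $u$; in other words, the Białynicki-Birula trivialization of the tangent bundle over $C^{+}(x)$ simultaneously trivializes the VMRT into the constant subvariety $\CC_{x}$. The identical statement holds on $X'$, and because $L$ sends $\CC_{x}$ to $\CC'_{x'}$ by hypothesis, we conclude $d\phi_{u}(\CC_{u})=\CC'_{\phi(u)}$ isomorphically.

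With this fibrewise VMRT-preserving property in hand, Theorem 4.1 applies and produces a biregular morphism $\Phi: X \to X'$ with $\Phi|_{C^{+}(x)}=\phi$; the degenerate case $X \cong \BP^{n}$ (where $\CC_{x}=\BP T_{x}X$) is handled separately, since it forces $X' \cong \BP^{n}$ and the statement is immediate from Corollary 2.8 and the fact that $L$ descends to a linear isomorphism of projective spaces. The $\BC^{*}$-equivariance of $\Phi$ follows from uniqueness: the algebraic equation $\Phi \circ \sigma_{t}=\sigma'_{t} \circ \Phi$ holds on the Zariski-dense open subset $C^{+}(x)$ by construction, hence on all of $X$; and $\Phi(x)=\phi(x)=x'$. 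The main technical obstacle is the constancy identity $d\psi_{u}(\CC_{u})=\CC_{x}$: its proof rests essentially on the equivariance of the Białynicki-Birula trivialization and on the fact that every orbit in $C^{+}(x)$ accumulates at the fixed point $x$, so that the fibrewise VMRT can be propagated out of the fixed point in a controlled way.
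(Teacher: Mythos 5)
Your proof is correct, but the key step is argued along a genuinely different route from the paper. The paper verifies the hypothesis of Theorem 4.1 by invoking the Euler-symmetric machinery of Section 2: since $x$ is general, Proposition 2.4 and Proposition 2.9(3) give an extension to $X$ of the translation action of $T_{x}X$ on $C^{+}(x)$, and translation-invariance makes the VMRT structure over $C^{+}(x)$ locally flat, so the affine-linear map $\overline{\phi}$ automatically preserves VMRTs fibrewise. You instead use only the $\BC^{*}$-action: differentiating the equivariance of the Bia\l ynicki-Birula chart $\psi$ and letting the orbit flow into the fixed point $x$, you obtain the constancy $d\psi_{u}(\CC_{u})=\CC_{x}$. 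This buys a more self-contained argument (no appeal to the vector-group compactification), at the price that the phrase ``by continuity of the VMRT family'' is where the real content sits: the clean justification is that $d\psi_{u}(\CC_{u})$ is literally constant along the orbit, the total space $\CC\subset\BP T(X)$ is closed, so the constant value is contained in the fibre of $\CC$ over $x$, and since $x$ and $u$ are general this fibre is the honest VMRT $\CC_{x}$, irreducible and of the same dimension as $\CC_{u}$, forcing equality; you also need $\sigma_{t}$ to preserve the chosen family $\CK$, which holds because $\BC^{*}$ is connected and permutes the finitely many irreducible components of $\mathrm{Ratcurves}^{n}(X)$. With that firmed up, your propagation step is a valid substitute for the paper's local flatness argument, and the remaining points (applying Theorem 4.1, equivariance of $\Phi$ by density of $C^{+}(x)$) coincide with the paper. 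A further difference in your favour: you explicitly dispose of the excluded case $X\cong\BP^{n}$ of Theorem 4.1 via $r=1$ and Corollary 2.8, a case the paper's proof passes over in silence.
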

\begin{proof}
	Assume  the projective isomorphism between $\CC_{x} \subset \BP T_{x}X$ and $\CC'_{x'} \subset \BP T_{x'}X'$ is given by the linear isomorphism $\phi: T_{x}X \rightarrow T_{x'}X'$. Then the identifications $C^{+}(x) \cong T_{x}X$ and $C^{+}(x') \cong T_{x'}X'$ induce an isomorphism $\overline{\phi}: C^{+}(x) \rightarrow C^{+}(x')$.   By Proposition 3.3, their VMRTs at general points are both irreducible and nonsingular.  To extend $\overline{\phi}$, it suffices to check the differential map of  $\overline{\phi}$ preserves the VMRT at a general point. As $x$ and $x'$ are both taken as general points, by Proposition 2.10 (3)  the action of $T_{x}X$ on $C^{+}(x)$ and the action of $T_{x'}X'$ on $C^{+}(x')$ can be extended to an action on $X$ and $X'$ respectively. This shows the VMRT structures over $C^{+}(x)$ and $C^{+}(x')$ are both locally flat. Thus by our definition of $\overline{\phi}$,  for any point $y \in C^{+}(x)$, $d\overline{\phi}_{y}$ must map $\CC_{y}$ isomorphically onto $\CC'_{\overline{\phi}(y)}$. This yields the existence of $\Phi$. Finally by our assumption $\overline{\phi}$ is $\BC^{*}$-equivariant, thus $\Phi$ is also $\BC^{*}$-equivariant as $C^{+}(x)$ is open dense.
\end{proof}

Now let $X=G/P=\CD(I)$ be a rational homogenous space of Picard number one defined by a simple algebraic group $G$ and a parabolic subgroup $P$ given by  $I=\{\alpha \}$ for a simple positive root $\alpha \in \Delta$. We review some facts about $\BC^{*}$-actions on rational homogenous spaces, most of which are taken from \cite{ORSW_2021}.\par 
Up to composing with a character, a $\BC^{*}$-action on $X$ is  given by  a cocharacter $\sigma: \BG_{m} \rightarrow T$ and the left multiplication of the maximal torus $T$ on $G$.  For a simple root $\beta$, we denote the cocharacter $\sigma_{\beta}$ by assigning  $\sigma_{\beta}(\eta)=\delta_{\eta,\beta}$ for any $\eta \in \Delta$. For a cocharacter $\sigma$, denote $\fg_{k}=\bigoplus\limits_{\theta \in \Phi: \sigma(\theta)=k} \fg_{\theta}$.
\begin{prop}
	Let $X=G/P=\CD(I)$ be a rational homogenous space of Picard number one, then:\par 
	(i) A $\BC^{*}$-action on $X$ given by a cocharacter $\sigma \in X_{*}(T)$ is equalized if and only if  the grading of $\fg$ is short, i.e., $\fg=\fg_{1} \oplus \fg_{0} \oplus  \fg_{-1}$. In this case, up to a conjuagation we can assume $\sigma=\sigma_{\beta}$ for some $\beta \in \Delta$. \par 
	(ii)  A $\BC^{*}$-action on $\CD(I)$ given by the cocharacter $\sigma_{\beta}$ is equalized with an isolated sink if and only if $\alpha=\beta$ and  $\sigma_{\alpha}$ defines a short grading of $\fg$, equivalently $X$ is an IHSS and $\sigma=\sigma_{\alpha}$. In this case,  the $\BC^{*}$-action is equalized of weight -1 at the sink $x'=eP$, and  it has an isolated source if and only if  $X=\CD(I)$ is an IHSS of tube type, where the isolated source is $y'=\dot{w_{o}}x'$.\par 
	(iii)  Assme that $X=\CD(I)$ is an IHSS, with the $\BC^{*}$-action given by $\sigma_{\alpha}$. The maximal dimensional Bruhat cell  is $C^{-}(x')=R_{u}(P^{-}) \cdot x'$ and $X$ is  an equivariant compactification of $R_{u}(P^{-})$. If $X$ is of tube type, then  $C^{+}(y')=R_{u}(P) \cdot y'$ and $X$ an equivariant compactification of $R_{u}(P)$.
\end{prop}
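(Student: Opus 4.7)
The plan is to reduce all three parts to explicit root-theoretic computations at torus-fixed points. The $T$-fixed points of $G/P_\alpha$ are indexed by $W/W_P$, and at $wP_\alpha$ the tangent space decomposes as $\bigoplus_{\theta \in A} \fg_{-w\theta}$ where $A := \{\theta \in \Phi^+ : m_\alpha(\theta) > 0\}$, with $\sigma$-weight $-\sigma(w\theta)$ on the summand $\fg_{-w\theta}$.

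For (i), the action is equalized at $wP_\alpha$ iff $\sigma(w\theta) \in \{0, \pm 1\}$ for every $\theta \in A$; as $w$ ranges over $W$ the union $\bigcup_w wA$ exhausts $\Phi$, so equalization on all of $X$ is equivalent to $\sigma(\theta) \in \{0, \pm 1\}$ for every $\theta \in \Phi$, i.e., $\fg = \fg_{-1} \oplus \fg_0 \oplus \fg_1$ is a short grading. Since any cocharacter is $W$-conjugate to a dominant one, and a dominant short-grading cocharacter must take value $1$ on a unique simple root and $0$ on the rest (otherwise some positive root would receive $\sigma$-value $\geq 2$), up to $W$-conjugation we obtain $\sigma = \sigma_\beta$ for some simple $\beta$.

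For (ii), I combine (i) with an isolated-sink analysis. Setting $B := \{\eta \in \Phi^+ : m_\beta(\eta) = 1\}$, an isolated sink $y = wP_\alpha$ of $\sigma_\beta$ is equivalent to $wA \subseteq B$, since every tangent weight must equal $-1$. The unique simple root of $A$ is $\alpha$ and the unique simple root of $B$ is $\beta$, so if $w\alpha$ happens to be simple it must equal $\beta$, and a further combinatorial analysis of the $A$/$B$ inclusion forces $\alpha = \beta$ and $\sigma_\alpha$ a short grading (i.e., $X$ is an IHSS). For the isolated-source criterion I apply the same analysis to the inverse action at $y' = \dot{w}_0 x'$: both sink and source isolated means $\dim \fg_{-1} = \dim \fg_1 = \dim X$, which is the tube-type condition by definition.

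For (iii), once $\sigma_\alpha$ is equalized with isolated sink $x' = eP$, Theorem 2.1 gives $C^-(x') \cong T_{x'}X$ as $\BC^*$-spaces. The open Bruhat cell $R_u(P^-) \cdot x'$ is $\sigma_\alpha$-invariant, has dimension $\dim \fg_{-1} = \dim X$, and is contained in $C^-(x')$, so the two coincide; since the short grading makes $\fg_{-1}$ abelian, $R_u(P^-)$ is a vector group and $X$ is its equivariant compactification. The tube-type claim is the mirror image $C^+(y') = R_u(P) \cdot y'$ via $\fg_1$. The main obstacle is the combinatorial step in (ii): the naive case $w = e$, $\beta \neq \alpha$ is immediate because $\alpha \in A$ yet $m_\beta(\alpha) = 0 \neq 1$, but ruling out $wA \subseteq B$ at general Weyl translates requires comparing the induced Levi-orbit structures on $A$ and $B$.
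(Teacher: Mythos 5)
Your strategy (weights of $\sigma$ at the $T$-fixed points $wP_{\alpha}$, with tangent weights $-\sigma(w\theta)$ for $\theta\in A=\{\theta\in\Phi^{+}:m_{\alpha}(\theta)>0\}$) is a legitimate way to verify this proposition directly; note the paper itself offers no proof, stating it as a review of facts from [ORSW21], so there is nothing internal to compare against. Parts (i) and (iii) of your argument are essentially correct, modulo small points you should make explicit: $\bigcup_{w}wA=\Phi$ holds because $A$ contains both the highest root and the highest short root (each has full support), so every $W$-orbit of roots meets $A$; and one needs $\sigma\neq 0$ to get $\sigma=\sigma_{\beta}$. In (iii) the inclusion $R_{u}(P^{-})\cdot x'\subseteq C^{-}(x')$ follows since conjugation by $\sigma_{\alpha}(t)$ contracts $R_{u}(P^{-})$ as $t\to\infty$, and equality then follows by dimension; that part is fine.

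The genuine gaps are both in (ii). First, the heart of (ii) is the implication ``equalized with isolated sink $\Rightarrow\beta=\alpha$'', i.e.\ ruling out $wA\subseteq B$ for $w\neq e$ when $\beta\neq\alpha$; you verify only the case $w=e$ and then write that ``a further combinatorial analysis\,\ldots\,forces'' the conclusion, admitting in your final sentence that this is the main obstacle. That step is not routine and cannot be waved through: the analogous statement with the sink replaced by the source is \emph{false} --- for example $\sigma_{\alpha_{n}}$ acting on $\BP^{n}=\CD(\{\alpha_{1}\})$ is equalized with an isolated extremal fixed point, namely the source $\dot{w}_{0}P$, where one has $w_{0}A\subseteq -B$ --- so any correct argument must exploit the positivity built into the sink condition (e.g.\ via the fact that the sink is the unique fixed component inside the dense $P_{\beta}^{-}$-orbit, and it is a single point $wP_{\alpha}$ iff the full Levi $L_{\beta}$ fixes it, i.e.\ $w^{-1}\Phi_{\Delta\setminus\{\beta\}}\subseteq\Phi_{\Delta\setminus\{\alpha\}}$, which must then be excluded for $\beta\neq\alpha$). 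As written, the key claim is asserted, not proved. Second, your criterion for tube type is incorrect: for \emph{every} IHSS the short grading gives $\dim\fg_{-1}=\dim\fg_{1}=\dim X$, so this equality cannot detect tube type, and it is in any case not the paper's definition (which is via tube domains over self-dual cones). The correct mechanism is that the source component of the $\sigma_{\alpha}$-action is $L\cdot\dot{w}_{0}x'$, which is a point iff $L\subseteq\dot{w}_{0}P\dot{w}_{0}^{-1}$, iff $-w_{0}(\alpha)=\alpha$; identifying this root-theoretic condition with tube type requires an additional input (Mok's characterization or the classification quoted in Theorem 1.1/[ORSW21]), which your sketch does not supply.
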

Now assume $X=\CD(I)$ to be an IHSS where $I=\{\alpha\}$ for some simple root $\alpha$  and consider the $\BC^{*}$-action on $X$ given by the cocharacter $-\sigma_{\alpha}$. Then the action is of Euler type at $x=eP$ and its inverse action is of Euler type at $y=\dot{w_{0}} \cdot x$. By (iii)  the $\BC^{*}$- action satisfies the condition of Proposition 2.4 (ii). \par 
We are ready to prove our main result.
\begin{proof}[Proof of Theorem \ref{main_thm}]
	If $X=\CD(I)$ is an IHSS of tube type,  denote the $\BC^{*}$-action on $X$ by $-\sigma_{\alpha}$. Then  $X$ is the equivariant compactification of  $R_{u}(P^{-})$ and $R_{u}(P)$ respectively, where  $x=eP$ is fixed by $R_{u}(P)$ and $y=\dot{w_{0}}x$ is fixed by $R_{u}(P^{-})$. Consider the morphism  $R_{u}(P) \times R_{u}(P^{-}) \rightarrow X \times X: (u,v) \rightarrow (uv \cdot x,u \cdot y)$, its image is a dense and constructible, whence contains a dense open subset. Moreover for any element $(u \cdot y,uv \cdot x)$ in the image
	we can define a $\BC^{*}$-action on $X$ by:
	\begin{align*}
		\BC^{*} \times X &\longrightarrow X \\
		(\,\,t \,\,\,,\,\,x\,\,) &\longrightarrow u v \, t \, (uv)^{-1} \cdot x,
	\end{align*}
	such that the $\BC^{*}$-action is of Euler type at the source $\tilde{y}=uv \cdot y=u \cdot y$ and its inverse action is of Euler type at $\tilde{x}=uv \cdot x$. 
	\par 
	Conversely assume that for a general pair of points $x,y$ on $X$, there is a $\BC^{*}$-action which is of Euler type at $x$ and the inverse action is of  Euler type at $y$. Take $\CK$  to be a family of minimal ratioinal curves on $X$, and let $\CC \subset \BP T(X)$ be the VMRT structure. By Theorem  \ref{non_vmrt_}, for the projective embedding $\CC_{x} \subset \BP T_{x}X$ we have $dim(\fa ut(\hat{\CC_{x}})^{(1)})=dim(T_{x}X)$. Thus from Theorem \ref{non_zero1}  and Proposition \ref{non_zero2}, $\CC_{x}$ is projectively isomorphic to the VMRT of an IHSS. Denote the IHSS by $X'=\CD(I)$ and consider the $\BC^{*}$-action on $X'$ given by $-\sigma_{\alpha}$ with the isolated source $x'$. Then $(X,x, X',x')$ satisfies the condition  of Corollary \ref{c_f} and hence $X$ is $\BC^{*}$-equivariant isomorphic to $X'$. As the $\BC^{*}$-action on $X$ has isolated sink and source,  $X'$ must be an IHSS of tube type by Proposition 4.3 (ii).
\end{proof}
\section*{Acknowledgements}
I am very grateful to my advisor Baohua Fu for sharing valuable ideas on Euler-symmetric varieties with me. I am also indebeted to him for many helpful  suggestions during the preparation of the paper.  I would like to thank Cong Ding  for reading the first draft of this paper and for useful discussions and suggestions. I am grateful to Zhijun Luo for helpful discussions.
\bibliographystyle{alpha}

\end{document}